\newcommand\Tstrut{\rule{0pt}{2.6ex}}         
\newcommand\Bstrut{\rule[-1.6ex]{0pt}{0pt}}   
\def\be{\begin{equation}}
\def\ee{\end{equation}}
\def\bea{\begin{eqnarray}}
\def\eea{\end{eqnarray}}
\def\nn{\nonumber}
\newcommand\sLP{\\[\smallskipamount]}
\newcommand\mLP{\\[\medskipamount]}
\newcommand\bLP{\\[\bigskipamount]}
\newcommand\al\alpha
\newcommand\ga\gamma
\newcommand\de\delta
\newcommand\ep\epsilon
\newcommand\la\lambda
\newcommand\si\sigma
\newcommand\half{\frac12}
\newcommand\thalf{\tfrac12}
\newcommand\goE{\mathfrak{E}}
\newcommand\goF{\mathfrak{F}}
\newcommand\goR{\mathfrak{R}}
\newcommand\FSA{\mathcal{A}}
\newcommand\FSC{\mathcal{C}}
\newcommand\FSH{\mathcal{H}}
\newcommand\FSM{\mathcal{M}}
\newcommand\FSO{\mathcal{O}}
\newcommand\FSS{\mathcal{S}}
\newcommand\CC{\mathbb{C}}
\newcommand\ZZ{\mathbb{Z}}
\newcommand\td{\tilde}
\newcommand\iy\infty
\newcommand{\qhyp}[5]{\,\mbox{}_{#1}\phi_{#2}\!\left(
  \genfrac{}{}{0pt}{}{#3}{#4};#5\right)}
\newcommand\const{\mathrm{const.}\,}
\newcommand{\iup}{\mkern1mu\textup{i}\mkern1mu}
\newcommand\Span{\operatorname{Span}}
\numberwithin{equation}{section}
\newtheorem{theorem}{Theorem}[section]
\newtheorem{lm}[theorem]{Lemma}
\newtheorem{prop}[theorem]{Proposition}
\newtheorem{cor}[theorem]{Corollary}
\newtheorem{Definition}[theorem]{Definition}
\newenvironment{df}{\begin{Definition}\rm}{\end{Definition}}
\newtheorem{Remark}[theorem]{Remark}
\newenvironment{remark}{\begin{Remark}\rm}{\end{Remark}}
\newtheorem{Assumption}[theorem]{Assumption}
\newenvironment{ass}{\begin{Assumption}\rm}{\end{Assumption}}
\begin{document}
\title{Automorphisms of the DAHA of type $\check{C_1}C_1$ and\\
non-symmetric
Askey--Wilson functions}

\author{Tom Koornwinder\footnote{\protect\url{thkmath@xs4all.nl},
Korteweg--de Vries Institute, University of Amsterdam,
P.O.~Box 94248, 1090 GE Amsterdam, The Netherlands.}\;
and Marta Mazzocco\footnote{\protect\url{marta.mazzocco@upc.edu},
ICREA, Pg.~Lluis Companys 23, 08010 Barcelona, Spain; 
Departament de Matem\`atiques, Universitat Polit\`ecnica de Catalunya,
08028 Barcelona, Spain; School of Mathematics, University of Birmingham, UK.}
\footnote{Work funded by the Leverhulme Trust Research Project
Grant RPG-2021-047.}}

\date{}

\maketitle

\begin{abstract}
In this paper we consider the automorphisms of the double affine Hecke algebra
(DAHA) of type
$\check{C_1}C_1$ which have a relatively simple
action on the generators and on the parameters, notably a symmetry $t_4$ which
sends the Askey--Wilson (AW) parameters $(a,b,c,d)$ to $(a,b,qd^{-1},qc^{-1})$.
We study how these symmetries act on the basic representation and on the
symmetric and non-symmetric AW polynomials and functions.
Interestingly $t_4$ maps AW polynomials to functions.
We take the rank one case of Stokman's Cherednik kernel for $BC_n$ as the
definition of the non-symmetric Askey--Wilson function. From it we derive
an expression as a sum of a symmetric and an anti-symmetric term.
\end{abstract}

\section{Introduction}
This paper marks the beginning of a programme to consider and possibly
classify the symmetries of the double affine Hecke algebra (DAHA) of type
$\check{C_1}C_1$ and of the Zhedanov algebra, to see how the symmetries of
these two algebras are related, and to study the actions of these symmetries
on the basic representation and on the
symmetric and non-symmetric Askey--Wilson (AW) polynomials and functions.

Some symmetries were earlier observed in the literature.
Stokman \cite[\S2.6]{St2003} gives a projective action of the
modular group $\mathbb P SL_2(\mathbb Z)$ on the DAHA operators.
Oblomkov \cite[Prop.~2.4]{Ob} generalizes this by
giving an action of an extension of
$\overline\Gamma:=\mathbb P GL_2(\mathbb Z)$, moreover acting
on the DAHA itself.
Furthermore,  Cramp\'e et al.~\cite[\S4]{Cr-F-Gab-V} and the first author
\cite[\S2.3]{Koo2023} observe an action of $W(D_4)$
(the Weyl group of root system
$D_4$) on the Zhedanov algebra. A natural question is
therefore to understand, also for other DAHA symmetries,
how they act in the basic representation and
whether they map AW polynomials to AW polynomials.
Interestingly, it turns out that
some of them actually map AW polynomials into AW functions, so that it is more
natural to see whether they map AW functions to AW functions.

Our programme has two main motivations.
On the one hand, DAHA symmetries acting on AW polynomials or functions may
imply possibly new functional or contiguous relations
for these special functions.
On the other hand, since the $\check{C_1}C_1$ DAHA can be obtained as
quantisation of the group algebra of the
monodromy group associated to the sixth Painlev\'e equation (PVI)
\cite{Ma2018},
it is natural to ask whether the symmetry group $G$ of PVI lifts to the
$\check{C_1}C_1$ DAHA. 
Okamoto \cite{Ok} proved that $G$ is isomorphic to the affine Weyl group of the
exceptional root system $F_4$. 
The group $G$ contains as subgroups the affine $W(D_4)$,
its extension by the Klein four-group $\mathcal K_4$ and three more
transformations which can be represented by the action of
$\overline\Gamma$.
So finding the action of affine  $W(D_4)$ and
$\overline\Gamma$ on the $\check{C_1}C_1$ DAHA would provide
an important step to answer this question.
Section 3 on DAHA automorphisms was written with such questions in mind.

Then we continue with the study of the actions of DAHA symmetries
on the basic representation (Section 4) and on the
(symmetric) AW polynomials and functions (Section 6).
We consider in particular the symmetry $t_4$ which
sends the AW parameters $(a,b,c,d)$ to $(a,b,qd^{-1},qc^{-1})$.
This symmetry acts on the AW operator $L$ (the second order $q$-difference
operator of which the AW polynomials and functions are
eigenfunctions). However, it does not act on AW polynomials but it does act on
AW functions.
Indeed, the symmetries of $(a,b,c,d)$ that act on the AW polynomials
are the permutations of $(a,b,c,d)$ (hence not $t_4$), while the ones that
act on the AW functions are the permutations of $(a,b,c,qd^{-1})$, which
include $t_4$.
The symmetries of
the AW function can be seen from its explicit expressions as given in
\cite{Koe-St}, but even nicer from Stokman's rank one symmetric Cherednik kernel
\cite[(4.4), (4.7)]{St2002}, since the symmetries are implied there by
the symmetries of the AW polynomials in which the kernel is expanded.

Finally, in Section 7, we turn to non-symmetric AW polynomials and functions,
which no longer have the $b\leftrightarrow c$ symmetry of their symmetric
counterparts. Non-symmetric AW polynomials were treated in great detail by
Noumi \& Stokman \cite{N-St} (see also \cite{Sa2000} and
\cite{Koo2007}). We need them here
in particular because the non-symmetric AW function, as the rank one case
of Stokman's Cherednik kernel \cite{St2003}, has non-symmetric AW polynomials in
its expansion. The symmetries of the non-symmetric AW polynomials then
imply the symmetries of the non-symmetric AW function, including a
$t_4$ symmetry.
Stokman's general case \cite[\S5.4]{St2003} of this kernel
for the nonreduced root system $BC_n$ is an analogue of Cherednik's kernel
\cite[\S5]{Ch} for reduced root systems.

Also in Section 7 we show that the non-symmetric AW function can be
written as a
sum of a symmetric AW function and an anti-symmetric term involving
a symmetric AW function with shifted parameters.
The symmetries can also be read off from this explicit expression: they follow
from the already known symmetries of the symmetric AW function.
Another motivation for deriving this new expression for the non-symmetric
AW function is that expressions of one-variable
non-symmetric special functions as sums of a symmetric and an anti-symmetric term
are well-known for Bessel functions, Jacobi polynomials (for these two
see for instance \cite{Koo-B}) and AW polynomials (see Section 7.1).
In fact, insertion
 in the rank one Cherednik kernel of such a formula for the non-symmetric
 AW polynomials yields our result.

We already mentioned most of the contents of this paper, but here we
summarize them again.
In Section 2 we introduce the DAHA of type $\check{C_1}C_1$ in two different
but equivalent presentations. In Section 3 about DAHA automorphisms we first
discuss the somewhat subtle notion of a symmetry of a (parameter dependent) DAHA.
Then we present
a number of symmetries for the $\check{C_1}C_1$ DAHA and we discuss some of the
groups they generate.
In Section 4 we define the basic
DAHA representation and consider the action of automorphisms, notably of $t_4$,
on it. We also discuss the spherical subalgebra (which is isomorphic to the
Zhedanov algebra at a special scalar value of the Casimir element), its polynomial
representation, and its symmetries. In Section 5 we introduce AW polynomials
and functions and give some of their properties. Next Section 6 deals with
the action of DAHA automorphisms on AW polynomials and functions.
Finally Section 7 presents non-symmetric AW polynomials and the non-symmetric
AW function as defined by Stokman, and we discuss their symmetries.
Then we derive the new expression of the non-symmetric AW function as a sum
of a symmetric and an anti-symmetric term.
We derive the explicit expression in rank one of Stokman's kernel in Appendix A.
In Appendix B we give an alternative proof that the non-symmetric AW function in
our new expression is an eigenfunction of the $Y$ operator (which was for
Stokman's kernel already part of the definition).

As a follow-up to this paper we have two further papers in preparation, to which
we will sometimes refer in this paper. The tentative titles are:
\\[\medskipamount]
[P2] (with D. Dal Martello) Quantum middle convolution.
\\[\medskipamount]
[P3] Further DAHA automorphisms.

\paragraph{Conventions}
For $q$-shifted factorials and $q$-hypergeometric series we use notation as
given in \cite[\S 1.2]{Gas-R}.
In this paper $q\in(0,1)$ will be fixed.
Mostly we suppress the dependence on $q$ as a separate parameter in the
notation.

We will denote $\CC^*:=\CC\backslash\{0\}$.
Omnipresent parameters $a,b,c,d$ will be generic. In particular they are
supposed to be complex, nonzero, and the parameters together with their inverses
should be all distinct. Further constraints will be introduced in
Assumption \ref{K213}.

For $z\in\CC\backslash (-\iy,0]$ the square root
$\sqrt{z}$, continuously depending on $z$, will be uniquely chosen
such that $\sqrt{z}>0$ if $z>0$.

\section{DAHA of type $\check{C_1}C_1$}
We present the DAHA of type $\check{C_1}C_1$ as in our earlier papers
\cite[(3.1)--(3.4)]{Koo2007}, \cite[(1.1)--(1.5)]{Ma2016} and \cite[(84)]{Koo-Ma}.
This is essentially the case $n=1$ of Sahi's presentation \cite[\S3]{Sa1999}.

 \begin{df}
\label{K142}
The DAHA of type $\check{C_1}C_1$, denoted by
$\FSH_{a,b,c,d}(T_1,T_0,Z)=\FSH_{a,b,c,d}$,
is the unital associative $\CC$--algebra  generated by $T_1,T_0,Z$
with relations
\begin{align}
(T_1+ a b)(T_1+1)=0,\label{sahi1}\\
(T_1 Z  +a)(T_1 Z +b)=0,\label{sahi3}\\
(qT_0 Z^{-1}+ c)(qT_0 Z^{-1}+d)=0,\label{sahi4}\\
(T_0+q^{-1} c d)(T_0+1)=0.\label{sahi2}
\end{align}
\end{df}

Note that \eqref{sahi1}, \eqref{sahi3} and \eqref{sahi2} imply that $T_1,T_0,Z$
have two-sided inverses, respectively given by
\be
\begin{split}
&T_1^{-1}=-a^{-1}b^{-1}T_1-(1+a^{-1}b^{-1}),\quad
T_0^{-1}=-qc^{-1}d^{-1}T_0-(1+qc^{-1}d^{-1}),
\\
&\qquad\qquad\qquad\quad Z^{-1}=-a^{-1}b^{-1}T_1ZT_1-(a^{-1}+b^{-1})T_1.
\end{split}
\label{K168}
\ee
Further important elements of $\FSH_{a,b,c,d}$ are
\be
Y:=T_1T_0,\qquad Y^{-1}=T_0^{-1}T_1^{-1},\qquad D:=Y+q^{-1}abcdY^{-1},
\qquad X:=Z+Z^{-1}.
\label{K83}
\ee
The following lemma is well-known. For completeness we sketch a proof.

\begin{lm}\label{K178}
The elements $X$ and $D$ commute with $T_1$.
\end{lm}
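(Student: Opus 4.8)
The plan is to verify the two commutation relations separately, using only the defining relations \eqref{sahi1}--\eqref{sahi2} together with \eqref{K168}. For $[T_1,X]=0$, recall $X = Z + Z^{-1}$, so it suffices to show that $T_1 Z + T_1 Z^{-1}$ commutes with $T_1$, or equivalently — after multiplying on the left by $T_1^{-1}$, which exists by \eqref{K168} — that $Z + Z^{-1}$ commutes with $T_1$ in the form $T_1(Z+Z^{-1}) = (Z+Z^{-1})T_1$. The key algebraic input is \eqref{sahi3}, which expands to $T_1 Z T_1 Z + (a+b)T_1 Z + ab = 0$. First I would isolate $T_1 Z T_1$: since $Z$ is invertible, $T_1 Z T_1 = -(a+b)T_1 - ab Z^{-1}$. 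This already shows $T_1 Z T_1$ is symmetric under $Z \leftrightarrow Z^{-1}$ in a suitable sense; more precisely, applying the same manipulation after conjugating, or simply computing $T_1 Z^{-1} T_1$ directly from the expression for $Z^{-1}$ in \eqref{K168}, one finds $T_1 Z^{-1} T_1 = -(a^{-1}+b^{-1})T_1 - a^{-1}b^{-1} Z$ (this uses \eqref{sahi1} to reduce $T_1^2$). Adding these two identities gives $T_1(Z+Z^{-1})T_1 = -\big((a+b) + (a^{-1}+b^{-1})\big)T_1 - abZ^{-1} - a^{-1}b^{-1}Z$, which is not yet obviously what we want; instead the cleaner route is to use $T_1 Z T_1 = -(a+b)T_1 - abZ^{-1}$ to write $T_1 X = T_1 Z + T_1 Z^{-1}$ and $X T_1 = Z T_1 + Z^{-1} T_1$, then multiply the relation $T_1 Z T_1 = -(a+b)T_1 - ab Z^{-1}$ on the left by $T_1^{-1}$ and on the right by $T_1^{-1}$ and compare; after using \eqref{sahi1} to eliminate $T_1^{-1}$ in favour of $T_1$, both $T_1 X$ and $X T_1$ reduce to the same $\CC$-linear combination of $1, T_1, Z, Z^{-1}, Z T_1, T_1 Z$, and the claim follows.

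For $[T_1,D]=0$, I would use the conjugate relations on the other side. Since $D = Y + q^{-1}abcd\, Y^{-1}$ with $Y = T_1 T_0$, I would compute $T_1 Y = T_1^2 T_0$ and reduce $T_1^2$ via \eqref{sahi1} to get $T_1 Y = -(ab+1)T_1 T_0 - ab\, T_0 = -(ab+1)Y - ab\, T_0$. Similarly $Y T_1 = T_1 T_0 T_1$, which I would handle by first writing $T_0 T_1$ in terms of $T_1 T_0$ and lower-order terms: the relation governing this is obtained by combining \eqref{sahi2} with \eqref{sahi1}, or more directly one observes that $T_1 T_0 T_1 \cdot (\text{something})$ can be simplified using $T_0^2 = -(q^{-1}cd+1)T_0 - q^{-1}cd$. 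The cleanest formulation: because $T_1, T_0$ each satisfy a quadratic, $T_1 Y + q^{-1}abcd\, T_1 Y^{-1}$ and $Y T_1 + q^{-1}abcd\, Y^{-1}T_1$ can each be rewritten, and the assertion $[T_1,D]=0$ becomes a finite identity among words of length $\le 3$ in $T_0, T_1$, which collapses after substituting the quadratic relations.

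The main obstacle I anticipate is the $[T_1,D]=0$ part: unlike $X$, whose defining data \eqref{sahi3} already directly couples $T_1$ and $Z$, the element $D$ mixes $Y^{-1}$, and pushing $T_1$ past $Y^{-1} = T_0^{-1}T_1^{-1}$ requires carefully tracking the inverses from \eqref{K168} and repeatedly invoking both quadratic relations \eqref{sahi1} and \eqref{sahi2}; it is easy to make sign or parameter errors in the $q^{-1}abcd$ prefactor. A conceptually cleaner alternative, which I would pursue if the brute-force computation gets unwieldy, is to recall the standard DAHA fact that $X$ and $D$ generate (together with the identity) the image of the spherical/commutative part under the symmetrizer, and that the Askey--Wilson operator $L$ corresponding to $D$ is $T_1$-invariant precisely because $Y + q^{-1}abcd\,Y^{-1}$ is the unique (up to affine transformation) element of $\CC[Y,Y^{-1}]$ fixed by the involution $Y \mapsto q^{-1}abcd\, Y^{-1}$ induced by conjugation by $T_1$; one checks from \eqref{sahi1}--\eqref{sahi2} that $T_1 Y T_1^{-1} = q^{-1}abcd\, Y^{-1}$ up to the known correction, and invariance of $D$ follows formally. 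Either way the proof is a short finite computation once the right intermediate identities ($T_1 Z T_1 = -(a+b)T_1 - abZ^{-1}$ and its $T_0$-analogue) are in hand.
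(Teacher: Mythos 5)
Your treatment of $X$ is, in its final form, exactly the paper's argument: from \eqref{sahi3} you get $T_1ZT_1=-(a+b)T_1-abZ^{-1}$, and multiplying this once on the left and once on the right by $T_1^{-1}$ (eliminating $T_1^{-1}$ via \eqref{sahi1}) yields $T_1Z-Z^{-1}T_1=(ab+1)Z^{-1}-(a+b)=ZT_1-T_1Z^{-1}$, whence $[T_1,X]=0$. That works. But the intermediate identity you state along the way, $T_1Z^{-1}T_1=-(a^{-1}+b^{-1})T_1-a^{-1}b^{-1}Z$, is false: the correct statement (obtained by inverting $T_1ZT_1=-(a+b)T_1-abZ^{-1}$, or from \eqref{sahi3a}) is $T_1^{-1}Z^{-1}T_1^{-1}=-(a^{-1}+b^{-1})T_1^{-1}-a^{-1}b^{-1}Z$. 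Since you abandon that branch, it does not damage the proof, but as written it is an error.

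The $D$ part has a genuine gap. Your primary mechanism --- ``writing $T_0T_1$ in terms of $T_1T_0$ and lower-order terms'' --- is not available: the relations \eqref{sahi1}--\eqref{sahi2} impose no braid-type relation between $T_0$ and $T_1$, and the subalgebra they generate is an infinite-dimensional affine Hecke algebra in which $T_0T_1$ and $T_1T_0$ are independent basis words; no such rewriting exists. Your fallback (``the identity collapses after substituting the quadratic relations'') is true but is asserted rather than shown, and the one concrete identity you offer in its support, $T_1YT_1^{-1}=q^{-1}abcd\,Y^{-1}$ ``up to the known correction,'' is also not an identity in this algebra. The missing observation that actually makes the computation close up is that $T_1+abT_1^{-1}=-(ab+1)$ is a \emph{scalar} by \eqref{sahi1} (and likewise $q^{-1}cd\,T_0^{-1}=-T_0-(q^{-1}cd+1)$ by \eqref{sahi2}). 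With these one gets on the one hand $T_1^2T_0-q^{-1}abcd\,T_0^{-1}=-(ab+1)Y+ab(q^{-1}cd+1)$, and on the other hand
\begin{equation*}
T_1T_0T_1-q^{-1}abcd\,T_1T_0^{-1}T_1^{-1}
=T_1T_0\bigl(T_1+abT_1^{-1}\bigr)+ab(q^{-1}cd+1)
=-(ab+1)Y+ab(q^{-1}cd+1),
\end{equation*}
and comparing the two gives $T_1D=DT_1$ without ever moving $T_0$ past $T_1$. This is precisely the paper's computation; your plan points in the right general direction (pure manipulation of the two quadratics) but does not identify the step that makes it work, and two of the three routes you propose for it would fail as stated.
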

\begin{proof}
It follows from \eqref{sahi1} and \eqref{sahi3} that
\begin{equation*}
T_1Z-Z^{-1}T_1=(ab+1)Z^{-1}-(a+b)=ZT_1-T_1Z^{-1}.
\end{equation*}
Hence $T_1$ commutes with $Z+Z^{-1}=X$.
Also, it follows from \eqref{sahi1} and \eqref{sahi2} that
\begin{equation*}
T_1^2T_0-q^{-1}abcdT_0^{-1}=-(ab+1)T_1T_0+ab(q^{-1}cd+1)=
T_1T_0T_1-q^{-1}abcdT_1T_0^{-1}T_1^{-1}.
\end{equation*}
Hence $T_1$ commutes with $T_1T_0+q^{-1}abcdT_0^{-1}T_1^{-1}=D$.
\end{proof}

\begin{cor}\label{K177}
If $f(Z)$ is a \emph{symmetric} Laurent polynomial in $Z$, i.e.,
$f(Z)=f(Z^{-1})$, then $f(Z)$ commutes with $T_1$.
\end{cor}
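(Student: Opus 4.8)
The plan is to reduce the claim to Lemma~\ref{K178} by rewriting $f(Z)$ as an ordinary polynomial in $X=Z+Z^{-1}$. First I would recall the elementary fact that the space of \emph{symmetric} Laurent polynomials in one variable $Z$ (those with $f(Z)=f(Z^{-1})$) is spanned by $1$ together with the elements $Z^n+Z^{-n}$ for $n\ge 1$: this is immediate, since the monomials $Z^m$ ($m\in\ZZ$) form a basis of all Laurent polynomials, and the symmetry identifies $Z^m$ with $Z^{-m}$.

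Next I would check that every such $Z^n+Z^{-n}$ already lies in the subalgebra $\CC[X]\subseteq\CC[Z,Z^{-1}]$ generated by $X$. This is a Chebyshev-type induction on $n$: the cases $n=0$ and $n=1$ are clear, and for $n\ge 1$ one has
\[
Z^{n+1}+Z^{-(n+1)}=(Z+Z^{-1})(Z^n+Z^{-n})-(Z^{n-1}+Z^{-(n-1)}),
\]
so the right-hand side lies in $\CC[X]$ by the inductive hypothesis. Consequently any symmetric Laurent polynomial $f(Z)$ can be written as $p(X)$ for some ordinary polynomial $p$.

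Finally, Lemma~\ref{K178} gives $T_1X=XT_1$, so $T_1$ commutes with every power $X^k$ and hence with every polynomial $p(X)$ in $X$. Applying this to the $p$ produced above yields $T_1 f(Z)=f(Z)T_1$, as required.

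I do not anticipate any genuine difficulty here: the only slightly non-formal point is the passage from ``symmetric Laurent polynomial'' to ``polynomial in $X$'', and the displayed recurrence settles it. (Alternatively one could cite the standard isomorphism between the ring of $\ZZ/2\ZZ$-invariant Laurent polynomials and $\CC[X]$, but writing out the recurrence keeps the argument self-contained.)
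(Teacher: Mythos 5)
Your proof is correct and follows essentially the same route as the paper: both reduce the statement to Lemma~\ref{K178} via the observation that the symmetric Laurent polynomials are exactly $\CC[X]$, established by a Chebyshev-type induction relating $Z^n+Z^{-n}$ to powers of $X$ (the paper runs the induction in the direction ``$T_1$ commutes with $X^n$, hence with $Z^n+Z^{-n}$'', while you make the recurrence explicit, which amounts to the same thing). No gaps.
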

\begin{proof}
By Lemma \ref{K178} $T_1$ commutes with $X^n=(Z+Z^{-1})^n$ ($n\in\ZZ_{\ge0}$).
It follows by complete induction that $T_1$ commutes with $Z^n+Z^{-n}$
($n\in\ZZ_{>0}$). And $T_1$ commutes with 1.
\end{proof}

In order to compare relations \eqref{sahi1}--\eqref{sahi2}
with an alternative
presentation below of the DAHA of type $\check{C_1}C_1$ it is convenient
to rewrite \eqref{sahi3}, \eqref{sahi4} as the following equivalent relations.
\begin{align}
(Z^{-1}T_1^{-1}+a^{-1})(Z^{-1}T_1^{-1}+b^{-1})=0,\label{sahi3a}\\
(T_0^{-1}Z+ qc^{-1})(T_0^{-1}Z+ qd^{-1})=0.\label{sahi4a}
\end{align}

In {\cite[Theorem 2.22]{N-St}, \cite[Definition 2.1]{Ob}} the DAHA of type
$\check{C_1}C_1$ is defined as an algebra
$H_{k_1,u_1,u_0,k_0}(V_1,\check{V_1},\check{V_0},V_0)=H_{k_1,u_1,u_0,k_0}$ with
generators $V_1$, $\check{V_1}$, $\check{V_0}$, $V_0$, and relations
\bea
\label{daha2}
(V_1-k_1)(V_1+k_1^{-1})=0,\\
\label{daha1}
(\check{V_1}-u_1)(\check{V_1}+u_1^{-1})=0,\\
\label{daha4}
(\check{V_0}-u_0)(\check{V_0}+u_0^{-1})=0,\\
\label{daha3}
(V_0-k_0)(V_0+k_0^{-1})=0,\\
\label{daha5}
V_0 \check{V_0}\check{V_1}V_1=q^{-1/2},
\eea
where  $k_1,u_1,u_0,k_0\in\CC^*$.
Here the Hecke relations \eqref{daha2}--\eqref{daha3} immediately
yield explicit two-sided inverses of the four generators. Relation
\eqref{daha5} remains essentially unchanged under cyclic permutation of
$V_1$, $\check{V_1}$, $\check{V_0}$, $V_0$.

We have given both presentations $\FSH_{a,b,c,d}(T_1,T_0,Z)$ and
$H_{k_1,u_1,u_0,k_0}(V_1,\check{V_1},\check{V_0},V_0)$ for the DAHA
because some automorphisms in Section 3 can be easier defined in the first
presentation and others in the second. But from Section 4 onwards we will
work only with the first presentation.

The algebras $\FSH_{a,b,c,d}$ and $H_{k_1,u_1,u_0,k_0}$,
with parameters related by
\be\label{eq:SO1}
a=u_1 k_1,\quad b=-u_1^{-1} k_1,\quad c=q^{1/2} u_0 k_0 ,\quad
d=-q^{1/2}u_0^{-1} k_0,
\ee
or vice versa by
\be\label{eq:SO1inv}
k_1=\iup\sqrt{ab},\quad
u_1=-\iup\sqrt{a b^{-1}},\quad
u_0=\iup\sqrt{cd^{-1}},\quad
k_0=-\iup \sqrt{q^{-1}cd},
\ee
are isomorphic
by an isomorphism $\phi=\phi_{a,b,c,d}$, explicitly given by
\be\label{eq:SO}
\begin{split}
&\phi(T_1)= k_1 V_1=\iup\sqrt{ab}\,V_1\,,\quad
\phi(T_0)=k_0 V_0=-\iup \sqrt{q^{-1}cd}\,V_0\,,\\
&\phi(Z) = q^{1/2} V_0\check{V_0}\,,
\qquad\qquad\; \phi(Z^{-1})= \check{V_1} V_1\,,
\end{split}
\ee
or vice versa by
\be\label{eq:SOinv}
\begin{split}
&\phi^{-1}(V_1)=-\iup\sqrt{a^{-1}b^{-1}}\,T_1=k^{-1} T_1,\\
&\phi^{-1}(\check V_1)= \iup\sqrt{ab}\,Z^{-1} T_1^{-1}=k_1 Z^{-1} T_1^{-1},\\
&\phi^{-1}(\check V_0)= -\iup q^{-1}\sqrt{cd}\,T_0^{-1} Z=
q^{-1/2} k_0 T_0^{-1} Z,\\
&\phi^{-1}(V_0)=\iup \sqrt{qc^{-1}d^{-1}}\,T_0=k_0^{-1}T_0.
\end{split}
\ee
In \eqref{eq:SO1inv} and \eqref{eq:SOinv} recall our convention about
square roots, by which we also immediately see:
\begin{lm}
\label{K144}
$a,b,c,d,ab,ab^{-1},cd,cd^{-1}$ have real part $>0$ iff
$u_0,-u_1,-k_0,k_1$ have argument $>\pi/4$ and $<3\pi/4$.
\end{lm}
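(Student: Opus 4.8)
The plan is to exploit the fact that, by \eqref{eq:SO1inv}, each of $k_1,u_1,u_0,k_0$ is (up to a fixed fourth-root-of-unity factor and our convention on $\sqrt{\cdot}$) the square root of one of the products $ab$, $ab^{-1}$, $cd$, $q^{-1}cd$, so the statement should reduce to a single elementary observation about how the argument of $\sqrt{z}$ relates to the real part of $z$. First I would record that for $z\in\CC\setminus(-\iy,0]$ one has $\Re z>0$ if and only if $|\arg z|<\pi/2$, and that under our chosen branch $\arg(\sqrt z)=\thalf\arg z$, so $\Re z>0$ is equivalent to $|\arg(\sqrt z)|<\pi/4$, i.e.\ $\sqrt z$ lies in the open right half of the right half-plane. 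Equivalently, writing $w=\sqrt z$, the condition $\Re z>0$ becomes $|\Re w|>|\Im w|$, or again: $w$ has argument in $(-\pi/4,\pi/4)$.

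Next I would apply this to each of the four parameters on the Zhedanov side. From \eqref{eq:SO1inv} we have $u_0=\iup\sqrt{cd^{-1}}$, so $u_0$ has argument $\thalf\arg(cd^{-1})+\pi/2$; hence $\Re(cd^{-1})>0$ iff $\arg(cd^{-1})\in(-\pi/2,\pi/2)$ iff $\arg u_0\in(\pi/4,3\pi/4)$. Similarly $-u_1=\iup\sqrt{ab^{-1}}$, so $\Re(ab^{-1})>0$ iff $\arg(-u_1)\in(\pi/4,3\pi/4)$; and $-k_0=\iup\sqrt{q^{-1}cd}$ with $q\in(0,1)$ fixed real positive, so $\arg(q^{-1}cd)=\arg(cd)$ and $\Re(cd)>0$ iff $\arg(-k_0)\in(\pi/4,3\pi/4)$; and $k_1=\iup\sqrt{ab}$, so $\Re(ab)>0$ iff $\arg k_1\in(\pi/4,3\pi/4)$. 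This already handles $ab,ab^{-1},cd,cd^{-1}$ together with $u_0,-u_1,-k_0,k_1$.

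It remains to see that the four conditions $\Re a>0$, $\Re b>0$, $\Re c>0$, $\Re d>0$ are subsumed. Here I would argue that $\Re a>0$ and $\Re b>0$ \emph{together} are equivalent to $\Re(ab)>0$ and $\Re(ab^{-1})>0$: indeed if $\arg a,\arg b\in(-\pi/2,\pi/2)$ then so are their sum and difference modulo the branch, and conversely $\arg(ab),\arg(ab^{-1})\in(-\pi/2,\pi/2)$ forces $\arg a=\thalf(\arg(ab)+\arg(ab^{-1}))\in(-\pi/2,\pi/2)$ and likewise for $b$; the same argument with $(a,b)$ replaced by $(c,d)$ (and using that the positive scalar $q^{-1}$ does not change arguments) gives the $c,d$ pair. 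Thus the eight conditions on the left collapse to the four conditions on $ab,ab^{-1},cd,cd^{-1}$, which by the previous paragraph match the four conditions on $u_0,-u_1,-k_0,k_1$, and the lemma follows.

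The only genuinely delicate point is bookkeeping of the branch cuts: the identity $\arg(\sqrt z)=\thalf\arg z$ and the factorizations $\sqrt{ab}=\sqrt a\sqrt b$ etc.\ are valid with our convention only when the relevant quantities avoid $(-\iy,0]$, which is guaranteed by the genericity hypothesis on $a,b,c,d$ (all nonzero, all of $a^{\pm1},b^{\pm1},c^{\pm1},d^{\pm1}$ distinct) and by working with the open half-plane conditions throughout, so that no argument ever sits on the boundary value $\pm\pi/2$ or $\pm\pi/4$ where the equivalences could fail. I expect this to be the main (mild) obstacle; everything else is the elementary half-plane arithmetic sketched above.
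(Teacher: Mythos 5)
Your first two paragraphs are correct and are essentially the argument the paper has in mind (the paper prints no proof; it declares the lemma ``immediately seen'' from the square-root convention): with the chosen branch, $\Re w>0$ iff $\arg(\iup\sqrt{w})\in(\pi/4,3\pi/4)$, and applying this to $ab$, $ab^{-1}$, $cd^{-1}$ and $q^{-1}cd$ (the positive factor $q^{-1}$ being harmless) identifies the four argument conditions on $k_1,-u_1,u_0,-k_0$ with the four conditions $\Re(ab),\Re(ab^{-1}),\Re(cd),\Re(cd^{-1})>0$.

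The third paragraph, however, contains a genuine error: the claimed equivalence ``$\Re a>0$ and $\Re b>0$ iff $\Re(ab)>0$ and $\Re(ab^{-1})>0$'' is false in both directions. For the forward direction take $\arg a=\arg b=2\pi/5$: then $\Re a,\Re b>0$ and $\Re(ab^{-1})>0$, but $\arg(ab)=4\pi/5$ gives $\Re(ab)<0$; so the eight conditions on the left of the lemma do not collapse to the four product conditions. For the converse take $a=-2$, $b=-1/3$: then $ab=2/3>0$ and $ab^{-1}=6>0$, yet $\Re a<0$. The underlying problem is that $\arg(ab)+\arg(ab^{-1})$ determines $\arg a$ only modulo $\pi$, so your formula $\arg a=\thalf(\arg(ab)+\arg(ab^{-1}))$ does not force $a$ into the right half-plane. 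This is exactly where the nontrivial content of the lemma sits: read naively with $(a,b,c,d)$ as primary data, the implication from the argument conditions to $\Re a,\Re b,\Re c,\Re d>0$ is literally false (the quadruple $(-2,-1/3,2,3)$ satisfies all four argument conditions), and the lemma must be read through the correspondence \eqref{eq:SO1}/\eqref{eq:SO1inv}. The repair is to use \eqref{eq:SO1} rather than halving arguments of products: if $\arg k_1$ and $\arg(-u_1)$ lie in $(\pi/4,3\pi/4)$, then $\arg u_1$ and $\arg(-u_1^{-1})$ lie in $(-3\pi/4,-\pi/4)$, so $a=u_1k_1$ and $b=-u_1^{-1}k_1$ have argument in $(-\pi/2,\pi/2)$, i.e.\ $\Re a,\Re b>0$; the same computation with $c=q^{1/2}u_0k_0$ and $d=-q^{1/2}u_0^{-1}k_0$ gives $\Re c,\Re d>0$. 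With that substitution the proof closes, since the other direction needs nothing beyond your second paragraph (the four product conditions are among the eight hypotheses).
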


Unless otherwise said we will from now on assume:
\begin{ass}
\mbox{$ab,ab^{-1},cd,cd^{-1}$, and hence also $a,b,c,d$,
have real part $>0$.}
\label{K213}
\end{ass}

\begin{remark}\label{re:alg-isom}
By comparing the Hecke relations \eqref{sahi1}, \eqref{sahi2},
\eqref{sahi3a},
\eqref{sahi4a} on the one hand and
\eqref{daha2}--\eqref{daha3} on the other hand,
we see that the particular choices made in \eqref{eq:SO1}--\eqref{eq:SOinv}
identify $V_1$, $\check{V_1}$, $\check{V_0}$, $V_0$ with the
given constant
multiples of $T_1$, $Z^{-1}T_1^{-1}$, $T_0^{-1}Z$, $T_0$, respectively, in
this particular order, because the cyclic
relation \eqref{daha5} must be equivalent to $Z Z^{-1}=1$.
An obvious freedom left is that we might have simultaneously
performed cyclic permutation of $V_1$, $\check{V_1}$, $\check{V_0}$, $V_0$
and $k_1$, $u_1$, $u_0$, $k_0$.
\end{remark}

\begin{remark}
Sahi \cite{Sa1999} and Stokman \cite{St2002} do not consider (for $n=1$)
DAHA relations of the form \eqref{daha2}--\eqref{daha5}, but they do use
parameters similar to $k_1,u_1,u_0,k_0$. In \cite[(1)]{Sa1999}
our $k_1,u_1,u_0,k_0$ correspond to
$t_1^{1/2},u_1^{1/2},u_0^{1/2},t_0^{1/2}$, and in \cite[(4.7)]{St2002} to
$t_1,u_1,u_0,t_0$.
\end{remark}

\begin{remark}
Analogous to Lemma \ref{K178} we have that $T_0$ commutes with $D$ and with
$q^{-1/2}Z+q^{1/2}Z^{-1}$. This can for instance be proved by starting with
Lemma \ref{K178},
applying $\phi$ to it, performing a cyclic permutation in the parameters and
the generators, and applying $\phi^{-1}$.
\end{remark}

\section{Some DAHA automorphisms}
In this section we first define what we mean by a DAHA automorphism.
Then we introduce the special automorphisms $t_2$, $t_3$, $t_4$, $\si$ and
$\tau$ which will be of special
interest for their possible action on AW polynomials and functions because
they leave $T_1$ invariant. In connection with
$\si$ we introduce dual parameters $\td a,\td b,\td c,\td d$.
The section ends with short discussions of the extended modular group and
the spherical braid group acting on the DAHA.
We postpone a more comprehensive treatment of DAHA automorphisms to a next paper
[P3].

\subsection{Definition of a DAHA automorphism}
More generally we define an automorphism of a parametrized associative
algebra.
\begin{df}\label{K182}
Let $H_a(\{U\})$ be an associative algebra depending on a parameter $a$ in
some set $E$, for instance some open subset of $\CC^m$, and with a finite set
$\{U\}$ of generators satisfying a finite set of relations $\{p_a(\{U\})=0\}$.
An \emph{automorphism} $t$ of the parametrized family $\{H_a(\{U\})\}$ of
associative algebras is a bijection $\hat t\colon E\to E$ together with
isomorphisms $\check t_a\colon H_a(\{U\})\to H_{\hat t(a)}(\{U\})$.
\end{df}

So $\check t_a$ can be defined by specifying $\check t_a(U)$ for all
generators $U$
as an algebraic expression in terms of the generators and $a$. It can be extended
homomorphically to an endomorphism of the free algebra
$\CC[a]\langle\{U\}\rangle$ (so $\check t_a$ should not map $a$ to $\hat t(a)$
in coefficients depending on $a$).
The set of relations $\{p_a(\{U\})=0\}$ for $H_a(\{U\})$
will be mapped by $\check t_a$ to
$\{p_a(\{\check t_a(U)\})=0\}$. This latter set of relations
should be equivalent to the set of relations
$\{p_{\hat t(a)}(\{U\})=0\}$ for $H_{\hat t(a)}(\{U\})$.
Then we have the desired isomorphism
$\check t_a\colon H_a(\{U\})\to H_{\hat t(a)}(\{U\})$.
\mLP\indent
Suppose that we have for all $a\in E$ faithful representations $\pi_a$ and
$\rho_a$ of $H_a(\{U\})$ on vector spaces $V$ and $W$, respectively.
Then it is a
natural question to find the lower horizontal arrow in the following
commutative diagram.
\begin{equation}
\begin{tikzcd}
H_a(\{U\})\arrow[r,"\check t_a"] \arrow[d,"\pi_a"]
& H_{\hat t(a)}(\{U\}) \arrow[d,"\rho_{\hat t(a)}"] \\
L(V)\arrow[r,"\mbox{?}"]
& L(W)
\end{tikzcd}
\label{K216}
\end{equation}
Here $L(V)$ is a suitable space of linear operators on $V$. It is a main theme
in Section \ref{K187} to find this arrow in the case of the basic
representation of our DAHA.
See there \eqref{K169}, \eqref{K173}, \eqref{K179}, \eqref{K215}
for $t=\tau, \tau^{-1}, t_4, \sigma$, respectively.
\mLP\indent
There is an alternative notion of automorphism. First denote $\{H_a(\{U\})\}$
by $H(a,\{U\})$, where the coordinates of $a$ are now considered as additional
generators of the algebra which are in the center of the algebra (a fact which can
be expressed by evident additional relations). We define 
an \emph{automorphism} of $H(a,\{U\})$ as an algebra isomorphism
$\td t\colon H(a,\{U\})\to H(a,\{U\})$ such that $\td t(a)$
only depends on $a$ and not on the $U$.

We can associate an automorphism $\td t$ of the second type with an
automorphism $t$ of the first type. For given $t$ consider
$\check t_{\hat t^{-1}(a)}\colon H_{\hat t^{-1}(a)}(\{U\})\to H_a(\{U\})$.
Then the set of relations
$\{p_{\hat t^{-1}(a)}(\{\check t_{\hat t^{-1}(a)}(U)\})=0\}$
must be equivalent to the set of relations $\{p_a(\{U\})=0\}$ for $H_a(\{U\})$.
This means that we can define an automorphism
$\td t\colon H(a,\{U\})\to H(a,\{U\})$ of the second kind by putting
\[
\td t:=\hat t^{-1}\circ\check t_{\hat t^{-1}(a)}^{-1}=
\check t^{-1}_a\circ \hat t^{-1}.
\]
Here we let $\hat t$ act on the $a$-parametrized free algebra with generators
$\{U\}$ by sending expressions $f(a)g(\{U\})$ to $f(\hat t^{-1}(a))g(\{U\})$.
There results a contravariant functorial relation between automorphsisms
$t$ of the first type and automorphisms $\td t$ of the second type:
\mLP
\begin{tikzcd}
H_a(\{U\})\arrow[dr,"\check t_a"] \arrow[ddd,dashed,"\mathrm{id}"]
\arrow[rr,"(s\circ t)\,\check{}_{\!a}"]
&& H_{\hat s(\hat t(a))}(\{U\}) \arrow[ddd,dashed,"\hat s\circ\hat t"]\\
& H_{\hat t(a)}(\{U\})\arrow[ur,"\check s_{\hat t(a)}"]
\arrow[d,dashed,"\hat t"]&\\
&H(a,\{U\})\arrow[dl,"\td t"] \arrow[dr,phantom]&\\
H(a,\{U\})\arrow[ur,phantom]&&
H(a,\{U\})\arrow[ul,"\td s"] \arrow[ll,"\td t\circ\td s"]
\end{tikzcd}
\mLP
Although the definition of the automorphism of the second type is more elegant and
more intuitive, it is less suitable for our purposes, because we want to consider
the action of automorphisms on special representations.

\begin{remark}
Very special DAHA automorphisms are considered by Sahi \cite[\S4]{Sa1999} and
Oblomkov \cite[Prop.~2.4]{Ob}. Since they only give a kind of definition
of automorphism in a special context, it is difficult to decide for sure
which definition they use. But it seems that Sahi has an automorphism of the
second type and Oblomkov of the first type. However, in their context the
difference between the two definitions is less important because their
automorphism acting on the parameters is an involution and their automorphism
acting on the generators does not depend on the parameters.
\end{remark}
\noindent
\textbf{Notation}\quad
From now on, by abuse of notation, we write for the maps $\hat t$ and $\check t_a$
associated with an automorphism $t$ of $\{H_a(\{U\})\}$ just
$t\colon E\to E$ and $t_a\colon H_a(\{U\})\to H_{t(a)}(\{U\})$.
\bLP\indent
We now use Definition \ref{K182} for automorphisms of the DAHA
$\FSH_{a,b,c,d}$, or equivalently $H_{u_1,k_1,k_0,u_0}$. Let $t$ be
such an automorphism.
We can relate its action on $\FSH_{a,b,c,d}$ with its action on
$H_{u_1,k_1,k_0,u_0}$
by the following commutative diagrams.
\mLP
\begin{minipage}{7cm}
\begin{tikzcd}
(a,b,c,d)\arrow[r,"t"] \arrow[d]
& (a',b',c',d') \arrow[d] \\
(k_1,u_1,u_0,k_0) \arrow[r,"t"]
& (k_1',u_1',u_0',k_0')
\end{tikzcd}
\end{minipage}
\quad
\begin{minipage}{7cm}
\begin{tikzcd}
(T_1,T_0,Z)\arrow[r, "t_{a,b,c,d}"] \arrow[d, "\phi_{a,b,c,d}"]
& (T_1',T_0',Z') \arrow[d, "\phi_{a',b',c',d'}"] \\
\big(V_1,\check{V_1},\check{V_0},V_0 \big)\quad
\arrow[r, "t_{k_1,u_1,u_0,k_0}"]
& \quad\big(V_1',\check{V_1}',\check{V_0}',V_0' \big)
\end{tikzcd}
\end{minipage}
\mLP
In the right vertical arrow in the right diagram we evaluate
$\phi_{a',b',c',d'}(U')=\phi_{a',b',c',d'}(t_{a,b,c,d}(U))$
($U=T_1$, $T_0$ or $Z$)
by applying $\phi_{a',b',c',d'}$ as an algebra homomorphism to the algebraic
expression in $T_0,T_1,Z$ given by $t_{a,b,c,d}(T_U)$.
Here $\phi_{a',b',c',d'}$
only acts on the generators $T_1,T_0,Z$, but leaves scalars unchanged, even if
they depend on $a,b,c,d$.
See later in this section a few worked out examples, for instance for
$t_4$ in Table \ref{tb:okam}.

\subsection{Inversions of parameters $k_1$, $u_1$, $u_0$, $k_0$}
We call {\it parameter inversions}\/ the transformations of $H_{u_1,k_1,k_0,u_0}$
that preserve the generators $V_1,\check{V_1},\check{V_0},V_0$ and only act on one
of the parameters 
$k_1,u_1,u_0,k_0$ by inversion and change of sign.
We list them in Table \ref{tb:okam},
where we also give the action of these transformations on $\FSH_{a,b,c,d}$.

 \begin{table}[h]
\begin{center} 
\begin{tabular}{||c||c|c|c|c||c|c|c|c||c|c|c||}
 \hline
  & $k_1$ &   $u_1$  & $u_0$  & $k_0$  & $a$ & $b$ & $c$ & $d$  & $T_1$ & $T_0$ & $Z$ \Tstrut\Bstrut\\
   \hline\hline
  $t_1$ &      $-\frac{1}{k_1}$ &   $u_1$  & $u_0$  & $k_0$  & $\frac{1}{b}$ & $\frac{1}{a}$ & $c$ & $d$  & $abT_1$ & $T_0$ & $Z$ \Tstrut\Bstrut\\
  \hline
  $t_2$ &   $k_1$ &   $-\frac{1}{u_1}$  & $u_0$  & $k_0$  & $b$ & $a$ & $c$ & $d$  & $T_1$ & $T_0$ & $Z$ \Tstrut\Bstrut\\
  \hline
  $t_3$ &  $k_1$ &   $u_1$   & $-\frac{1}{u_0}$  & $k_0$  & $a$ & $b$ &$d$ & $c$  & $T_1$ & $T_0$ & $Z$ \Tstrut\Bstrut\\
  \hline
 $t_4$ &   $k_1$ &   $u_1$  & $u_0$  & $-\frac{1}{k_0}$  & $a$ & $b$ &  $\frac{q}{d}$ & $\frac{q}{c}$ & $T_1$ & $\frac{cd}{q}T_0$ & $Z$ \Tstrut\Bstrut\\
  \hline
\end{tabular}
\vspace{0.2cm}
\end{center}
\caption{Parameter inversions.}
\label{tb:okam}
\end{table}

It is immediately seen that the actions on $(a,b,c,d)$ as given in the table
correspond with the parameter inversions in $(k_1,u_1,u_0,k_0)$.
This correspondence also holds on the DAHA level.
For instance, let us show that $t_4(T_0)=q^{-1}cd T_0$ corresponds with
$t_4(\check{V_0},V_0)=(\check{V_0},V_0)$
(for brevity we write $\phi_{c,d}$ instead of $\phi_{a,b,c,d}$):
\mLP
\begin{tikzcd}
\iup\sqrt{qc^{-1}d^{-1}}\,T_0 \arrow[r, "t_4"]\arrow[d,phantom]
&\iup\sqrt{q^{-1}cd}\,T_0
\arrow[d,"\phi_{q/d,q/c}"]&
\\
\check{V_0}\arrow[u, "\phi_{c,d}^{-1}"]\arrow[r,"t_4"]
&\iup\sqrt{q^{-1}cd}\,\phi_{q/d,q/c}(T_0)&\hskip-1.2cm=
\big(\iup\sqrt{q^{-1}cd}\,\big)\big(-\iup\sqrt{q^{-1}cd}\,\big)
\check{V_0}=\check{V_0},
\end{tikzcd}
\mLP
\begin{tikzcd}
-\iup q^{-1}\sqrt{cd}\,ZT_0^{-1}\arrow[r, "t_4"]\arrow[d,phantom]
&-\iup(cd)^{-1/2}\,ZT_0^{-1}\arrow[d,"\phi_{q/d,q/c}"]
\\
V_0\arrow[u, "\phi_{c,d}^{-1}"]\arrow[r,"t_4"]
&-\iup(cd)^{-1/2}\,\phi_{q/d,q/c}(Z)\phi_{q/d,q/c}(T_0)^{-1}=V_0.
\end{tikzcd}
\mLP\indent
Observe the following properties of the parameter inversions
\begin{itemize}
\item
$t_2$ and $t_3$ leave all generators
$T_0$, $T_1$, $Z$ invariant, and hence also $Y$, $D$, $X$;
\item
$t_4$ leaves $T_1$ and $Z$ invariant.
\end{itemize}
As a consequence, we will see that $t_2$, $t_3$, $t_4$  act on the
spherical subalgebra (see Definition \ref{K214}) as well.
Furthermore we will see, quite remarkably, in Section 6 and 7
that $t_2$ and $t_3$ but not $t_4$ act on symmetric and non-symmetric AW
polynomials, while $t_2$ and $t_4$ but not $t_3$ act on symmetric and
non-symmetric AW functions. As mentioned above, we postpone a discussion of
the action of $t_1$ to [P3] because it doesn't restrict to the spherical
subalgebra.

\begin{remark}\label{rmkWD4}
Note that the parameter inversions in Table \ref{tb:okam} exchange the
positions of $a$ and $b$ or of $c$ and $d$, but never mix the pair $(a,b)$
with the pair $(c,d)$.
One symmetry which does a further mixing of parameters is
$$
t_0\colon(a,b,c,d)\mapsto \left(\frac{q}{d},b,c,\frac{q}{a}\right).
$$
However, the action of this transformation on the generators $T_0,T_1,Z$,
or on $\check{V_1},V_1,V_0,\check{V_0}$ is quite involved
and will be the subject of [P2].
Note also that Assumption \ref{K213} is not preserved by $t_0$, but after
addition of further constraints as in Lemma \ref{K143}, the assumption is
preserved.

Notice the action of a further element
\be
\hat t_0:= (t_2 t_4)t_0(t_2 t_4)^{-1}\colon(a,b,c,d)\mapsto(a,c,b,d).
\label{K161}
\ee
So, in their action on parameter space, $t_0,t_2,t_3,t_4$ generate a
group isomorphic to the semidirect
product $S_4\ltimes (\ZZ/2\ZZ)^3$ (where $(\ZZ/2\ZZ)^3$ is isomorphic to
the group of an even
number of sign changes of $1,2,3,4$). We recognize this group as the 
Weyl group $W(D_4)$, see \cite[\S12.1]{Hu}. In [P2] 
the corresponding actions of $t_0,t_2,t_3,t_4$ on the DAHA will be studied.
In \S\ref{sec:D4Zh} we will meet $W(D_4)$ again, but in a different
realization, acting on the spherical subalgebra.

If we take $t_0,t_1,t_2,t_3$ instead of $t_0,t_2,t_3,t_4$ as generators
then another version of $W(D_4)$ acting on the parameter space is obtained.
When we include both $t_1$ and $t_4$, i.e., consider
$\langle t_0,t_1,t_2,t_3,t_4\rangle$, we obtain an action of the Weyl group of
the affine root lattice $D_4^{(1)}$.
This will also be discussed in [P2] and [P3].
\end{remark}

\subsection{Duality and the automorphism $\si$}\label{suse:duality}
\label{se:duality}
Recall our convention about square roots.
Let $a,b,c,d$ satisfy Assumption \ref{K213}. Then
$q^{-1}abcd\in\CC\backslash (-\iy,0]$. Now
\emph{dual parameters} $\td a,\td b,\td c,\td d$ are well-defined by
\be
\td a=\sqrt{q^{-1}abcd},
\quad\td a\td b=ab,
\quad\td a\td c=ac,
\quad\td a\td d=ad.
\label{79}
\ee
Then we have also:
\be
\begin{split}
&\td b=\sqrt{qabc^{-1}d^{-1}},\quad
\td c=\sqrt{qab^{-1}cd^{-1}},\quad
\td d=\sqrt{qab^{-1}c^{-1}d},\\
&\td b\td c=qad^{-1},\quad
\td b\td d=qac^{-1},\quad
\td c\td d=qab^{-1},\quad
\td a\td b^{-1}=q^{-1}cd,\quad
\td a\td c^{-1}=q^{-1}bd,\\
&\td a\td d^{-1}=q^{-1}bc,\quad
\td b\td c^{-1}=bc^{-1},\quad
\td b\td d^{-1}=bd^{-1},\quad
\td c\td d^{-1}=cd^{-1}.
\end{split}
\label{K105}
\ee
Note that $t_4$ acting on $a,b,c,d$ corresponds to $t_2$ acting on
$\td a,\td b, \td c, \td d$, while exchange of $b,c$ corresponds to exchange
of $\td b,\td c$.
Also the following lemma is clear from \eqref{K105}.

\begin{lm}
\label{K143}
Let Assumption \ref{K213} hold.
Then \textup{(i)} $q^{-1}abcd\in\CC\backslash (-\iy,0]$\textup{; (ii)}
the dual parameters are well-defined by \eqref{79}\textup{; (iii)}
$\td a$, $\td b$, $\td c$, $\td d$, $\td a\td b$, $\td a\td b^{-1}$,
$\td c\td d$, $\td c\td d^{-1}$ all have real part $>0$ and
$q^{-1}\td a\td b\td c\td d\in\CC\backslash (-\iy,0]$\textup{; (iv)}
taking duals of dual parameters once more, we recover $a,b,c,d$.

If moreover
$ac$, $ad$, $bc$, $bd$, $ac^{-1}$, $ad^{-1}$, $bc^{-1}$, $bd^{-1}$ have real part
$>0$ then
$\td a\td c$, $\td a\td d$, $\td b\td c$, $\td b\td d$, $\td a\td c^{-1}$,
$\td a\td d^{-1}$, $\td b\td c^{-1}$, $\td b\td d^{-1}$ have real part $>0$.
\end{lm}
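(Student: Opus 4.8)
The plan is to prove Lemma \ref{K143} by a direct bookkeeping argument that reduces everything to the explicit formulas \eqref{79} and \eqref{K105} together with our convention on square roots. The key observation is that each dual parameter and each listed product of dual parameters is, by \eqref{79}--\eqref{K105}, an expression of the form $\sqrt{w}$ or $q^{\pm1}w$ or $w$ where $w$ is a product of powers of $a,b,c,d$ that (under Assumption \ref{K213}, or under the extra hypotheses for the last sentence) has been assumed to have positive real part. So the whole lemma follows once we record two elementary facts about the chosen branch of the square root: (a) if $\Re w>0$ then $\Re\sqrt w>0$ (indeed $w\notin(-\iy,0]$, so $\sqrt w$ is defined, and $\arg\sqrt w=\thalf\arg w\in(-\pi/4,\pi/4)$); and (b) if $\Re w>0$ and $\Re w'>0$ then $\Re(ww')$ need not be positive in general, but $ww'\notin(-\iy,0]$ and $\sqrt{ww'}=\sqrt w\,\sqrt{w'}$, and moreover $q^{\pm1}w$ has positive real part since $q>0$.

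First I would dispose of (i): by Assumption \ref{K213} the numbers $ab$ and $cd$ have argument in $(-\pi/2,\pi/2)$, hence $abcd$ has argument in $(-\pi,\pi)$, i.e.\ $abcd\notin(-\iy,0]$, and multiplying by $q^{-1}>0$ keeps this; so $q^{-1}abcd\in\CC\backslash(-\iy,0]$. Part (ii) is then immediate: $\td a=\sqrt{q^{-1}abcd}$ is well-defined by (i), and $\td b,\td c,\td d$ are defined from it by division, with the stated closed forms in \eqref{K105} obtained by squaring and checking signs/branches. For (iv), note that applying the duality construction to $\td a,\td b,\td c,\td d$ gives $\sqrt{q^{-1}\td a\td b\td c\td d}$; from the products in \eqref{K105} one computes $\td a\td b\td c\td d = (ab)(cd^{-1})\cdot(\text{rearranged}) $ — more cleanly, $\td a^2\td b^2\td c^2\td d^2=(q^{-1}abcd)(qabc^{-1}d^{-1})(qab^{-1}cd^{-1})(qab^{-1}c^{-1}d)=q^2 a^4$, and a branch check using part (iii) (all $\td{}$'s have positive real part) shows $\td a\td b\td c\td d = q a^2$, hence $\sqrt{q^{-1}\td a\td b\td c\td d}=\sqrt{a^2}=a$ (the last equality again by the branch convention, since $\Re a>0$); the remaining dual parameters then come out as $b,c,d$ by the same divisions.

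The substantive part is (iii). Here I would go through the eight quantities one at a time, reading off from \eqref{K105} which product of $a,b,c,d$ each is a square root (or $q^{\pm1}$-multiple) of: $\td a=\sqrt{q^{-1}abcd}$, $\td b=\sqrt{qab\,c^{-1}d^{-1}}$, $\td c=\sqrt{qab^{-1}cd^{-1}}$, $\td d=\sqrt{qab^{-1}c^{-1}d}$, $\td a\td b=ab$, $\td a\td b^{-1}=q^{-1}cd$, $\td c\td d=qab^{-1}$, $\td c\td d^{-1}=cd^{-1}$. For the four genuine square roots the radicand is, up to the positive factor $q^{\pm1}$, a product of two of the four quantities $ab,ab^{-1},cd,cd^{-1}$ (e.g.\ $qab\,c^{-1}d^{-1}=q\,(ab)(cd)^{-1}$ and $(cd)^{-1}$ has positive real part whenever $cd$ does, since inversion preserves the half-plane $\Re>0$ only after... — more carefully, $(cd)^{-1}=\overline{cd}/|cd|^2$ has the same sign of real part as $cd$), so each radicand lies in $\CC\backslash(-\iy,0]$ and fact (a) gives $\Re>0$. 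For $\td a\td b=ab$, $\td a\td b^{-1}=q^{-1}cd$, $\td c\td d=qab^{-1}$, $\td c\td d^{-1}=cd^{-1}$ the claim is literally Assumption \ref{K213} (times a positive constant). Finally $q^{-1}\td a\td b\td c\td d=a^2/q$... wait, $q^{-1}\cdot qa^2 = a^2$ has argument $2\arg a\in(-\pi,\pi)$, so it lies in $\CC\backslash(-\iy,0]$, completing (iii). The extra-hypotheses sentence is handled identically: each of $\td a\td c,\td a\td d,\td b\td c,\td b\td d,\td a\td c^{-1},\td a\td d^{-1},\td b\td c^{-1},\td b\td d^{-1}$ equals, by \eqref{79} and \eqref{K105}, one of $ac,ad,q^{-1}bd,q^{-1}bc,qad^{-1},qac^{-1},bc^{-1},bd^{-1}$ (reading $\td b\td c=qad^{-1}$ etc.), each of which is a positive multiple of one of the eight newly-assumed positive-real-part quantities.

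The only genuine obstacle is a purely organizational one: keeping the sign/branch bookkeeping straight, i.e.\ verifying in each case that the chosen square root $\sqrt w$ really is the product of $a,b,c,d$-powers that \eqref{K105} asserts (and not its negative), and that inversion $w\mapsto w^{-1}$ and multiplication by $q^{\pm1}$ preserve the open right half-plane. None of this is deep — it is exactly the kind of argument foreshadowed by the sentence ``recall our convention about square roots, by which we also immediately see'' before Lemma \ref{K144} — so I would present (iii) and (iv) compactly, stating the two branch facts (a)--(b) once and then saying ``the remaining assertions follow by inspection of \eqref{79} and \eqref{K105}'' rather than writing out all sixteen verifications.
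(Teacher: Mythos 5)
Your proposal is correct and follows exactly the route the paper intends: the paper offers no written proof, stating only that the lemma ``is clear from \eqref{K105}'', and your argument is precisely the verification from \eqref{79}--\eqref{K105} together with the square-root convention (in particular your branch checks, e.g.\ $\arg\td c=\thalf(\arg(ab^{-1})+\arg(cd^{-1}))\in(-\pi/2,\pi/2)$ and $\td a\td b\td c\td d=qa^2$, are the right ones). Nothing further is needed.
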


By \eqref{eq:SO1} and \eqref{eq:SO1inv} the duality map
$(a,b,c,d)\to(\td a,\td b,\td c,\td d)$ translates to
$(k_1,u_1,u_0,k_0)\to(k_1,k_0,u_0,u_1)$, exchanging $u_1$ and $k_0$.
A corresponding DAHA automorphism
$\si$ which maps $H_{k_1,u_1,u_0,k_0}$ to
$H_{k_1,k_0,u_0,u_1}$ is given in
\cite[Prop.~8.5(ii)]{N-St}, \cite[Cor.~2.7]{St2003} while working in the
basic representation, and in \cite[(2.10)]{Ob} on the DAHA:
\be\label{eq:sig-cal}
\si\colon\begin{cases}
\big(V_1,\check{V_1},\check{V_0},V_0 \big)\mapsto
\big(V_1,V_0,V_0\check V_0 V_0^{-1},V_1^{-1} \check V_1 V_1\big)\colon
H_{k_1,u_1,u_0,k_0}\to H_{k_1,k_0,u_0,u_1},&\\
\left(T_1,T_0,Z\right)\qquad\!\mapsto
\big(T_1,\td a\,T_1^{-1} Z^{-1},a\,T_1^{-1} T_0^{-1}\big)\colon\quad\!\!
\FSH_{a,b,c,d}\to\FSH_{\td a,\td b,\td c,\td d}\,.&
\end{cases}
\ee

Let us show, for instance, that the action of $\si$ on $\FSH_{a,b,c,d}$
implies that $\si(V_0)=V_1^{-1}\check V_1 V_1$ and
$\si(\check V_1)=V_0$\,:
\mLP
\begin{tikzcd}
\iup\sqrt{qc^{-1}d^{-1}}\,T_0\arrow[r, "\si"]\arrow[d,phantom]
&\iup\sqrt{ab}\,T_1^{-1}Z^{-1}\arrow[d,"\phi_{\td a,\td b,\td c,\td d}"]
\\
V_0\arrow[u, "\phi_{a,b,c,d}^{-1}"]\arrow[r,"\si"]
&\iup\sqrt{ab}\,
\phi_{\td a,\td b,\td c,\td d}(T_1)^{-1}
\phi_{\td a,\td b,\td c,\td d}(Z)^{-1}
=V_1^{-1}\check V_1 V_1,
\end{tikzcd}
\mLP
\begin{tikzcd}
\iup\sqrt{ab}\,Z^{-1}T_1^{-1}\arrow[r, "\si"]\arrow[d,phantom]
&\iup\sqrt{a^{-1}b}\,T_0\arrow[d,"\phi_{\td a,\td b,\td c,\td d}"]
\\
\check V_1\arrow[u, "\phi_{a,b,c,d}^{-1}"]\arrow[r,"\si"]
&\iup\sqrt{a^{-1}b}\,\phi_{\td a,\td b,\td c,\td d}(T_0)=V_0
\end{tikzcd}
\mLP\indent
Iteration of \eqref{eq:sig-cal} gives
\be\label{eq:sig2-cal}
\si^2\colon\begin{cases}
\big(V_1,\check V_1,\check V_0,V_0 \big)\mapsto
V_1^{-1}\big(V_1,\check V_1,\check V_0,V_0 \big)V_1\colon
H_{k_1,u_1,u_0,k_0}\to H_{k_1,u_1,u_0,k_0},&\\
\left(T_0,T_1,Z\right)\qquad\!\mapsto T_1^{-1}\left(T_0,T_1,Z\right)T_1\colon
\qquad\!\!\FSH_{a,b,c,d}\to\FSH_{a,b,c,d}.&
\end{cases}
\ee

\begin{remark}
\label{K145}
By \eqref{eq:SO1inv} $u_1u_0=\sqrt{q^{-1}abcd}=\td a$, where $\td a$ is
the dual parameter
given by \eqref{79}. So $\phi(T_1T_0)=\td a\,V_1 V_0$.
So the element $Y$ in \cite[\S2.15 and Theorem 2.22(1)]{N-St}
and the element $Y_1$ in case $n=1$ of \cite[(2.7)]{St2003}
can be identified with our $\td a^{-1}Y$.
By \eqref{eq:sig-cal} we have
\begin{equation}
\si(\td a^{-1}Y)=\td a^{-1}\si(T_1T_0)=Z^{-1},
\end{equation}
compare with \cite[Cor.~2.7]{St2003}.
From \eqref{eq:sig-cal} we also have
$\si(aT_0Z^{-1})=\td a\,T_0Z^{-1}$.
\end{remark}
\subsection{The flip $(a,b,c,d)\mapsto(a,b,c,qd^{-1})$ and the automorphism
$\tau$}
\label{K181}
Let $\tau$ be the DAHA automorphism given on $H_{k_1,u_1,u_0,k_0}$ by
\be\label{eq:tau-new}
\tau\colon\begin{cases}
\big(V_1,\check V_1,\check V_0,V_0 \big) \mapsto
\big(V_1,\check V_1,V_0,V_0\check V_0 V_0^{-1}\big)\colon
H_{k_1,u_1,u_0,k_0}\to H_{k_1,u_1,k_0,u_0},&\\
\qquad\!(T_1,T_0,Z) \mapsto
\big(T_1,q^{-1}c ZT_0^{-1},Z\big)\colon\quad
\FSH_{a,b,c,d}\to\FSH_{a,b,c,qd^{-1}},&
\end{cases}
\ee
so exchanging $u_0$ and $k_0$ while acting on the parameters $k_1,u_1,u_0,k_0$.
Then
\be\label{K172}
\tau^{-1}\colon\begin{cases}
\big(V_1,\check V_1,\check V_0,V_0 \big) \mapsto
\big(V_1,\check V_1,\check V_0^{-1} V_0 \check V_0,\check V_0\big)\colon
H_{k_1,u_1,u_0,k_0}\to H_{k_1,u_1,k_0,u_0},&\\
\qquad\!(T_1,T_0,Z) \mapsto
\big(T_1,q^{-1}c T_0^{-1}Z,Z\big)\colon\quad\,
\FSH_{a,b,c,d}\to\FSH_{a,b,c,qd^{-1}},&\end{cases}
\ee
\be\label{K183}
\tau^2\colon\begin{cases}
\big(V_1,\check V_1,\check V_0,V_0 \big) \mapsto
\big(V_1,\check V_1,V_0 \check V_0 V_0^{-1},
V_0\check V_0 V_0 \check V_0^{-1} V_0^{-1}\big)\colon
H_{k_1,u_1,u_0,k_0}\to H_{k_1,u_1,u_0,k_0},&\\
\qquad\!(T_1,T_0,Z) \mapsto
\big(T_1,ZT_0Z^{-1},Z\big)\colon\qquad\qquad\qquad\qquad\!\!
\FSH_{a,b,c,d}\to\FSH_{a,b,c,d}\,.&\end{cases}
\ee
In combination with the data for $t_3$ and $t_4$ in Table \ref{tb:okam} we see
that
\begin{align}
t_4=\tau t_3 \tau^{-1}\colon& T_0\mapsto q^{-1}c T_0^{-1}Z
\mapsto q^{-1}c T_0^{-1}Z  \mapsto q^{-1}cd T_0\nn\\
\colon& \FSH_{a,b,c,d} \to \FSH_{a,b,c,qd^{-1}}\to
\FSH_{a,b,qd^{-1},c}\to\FSH_{a,b,qd^{-1},qc^{-1}}.
\label{K176}
\end{align}
From \eqref{eq:sig-cal}, \eqref{eq:tau-new} we obtain
\be\label{eq:sigtau3-new}
(\si\tau)^3\colon\begin{cases}
\big(V_1,\check V_1,\check V_0,V_0 \big) \mapsto
V_1^{-2}\big(V_1,\check{V_1},\check{V_0},V_0 \big)V_1^2\colon
H_{k_1,u_1,u_0,k_0}\to H_{k_1,u_1,u_0,k_0},&\\
\qquad\!(T_1,T_0,Z) \mapsto
T_1^{-2}\left(T_1,T_0,Z\right)T_1^2\colon\qquad\!
\FSH_{a,b,c,d}\to\FSH_{a,b,c,d}\,.&\end{cases}
\ee

\begin{remark}
The automorphisms $\si$ and $\tau$ are examples of DAHA automorphisms
which act
on the parameters $k_1,u_1,u_0,k_0$ as permutations. In particular, $\tau$ acts
on these parameters as an adjacent transposition. By cyclic permutation of
$k_1,u_1,u_0,k_0$ and $V_1,\check V_1,\check V_0,V_0$ we can obtain from $\tau$
two other automorphisms which act on the parameters as adjacent transpositions,
and these three adjacent transpositions generate the permutation group
$S_4$. However, when also acting on the
DAHA generators, they generate an infinite group.
The automorphisms $\si$ and $\tau$ generate a subgroup of the latter group
which leaves $T_1$ fixed; see also the next subsection.
\end{remark}

\subsection{Extended modular group action on the DAHA}\label{se:Gamma}
Clearly $\si$ and $\tau$ in their action on 
$k_1,u_1,u_0,k_0$ parameter space generate the group $S_3$ of permutations
of $u_1,u_0,k_0$. 
For the group generated by $\si$ and $\tau$ acting on the DAHA
we have from
(3.5) and (3.11) relations $\sigma^2=\rho$ and $(\sigma\tau)^3=\rho^2$,
where $\rho$ is the conjugation given by
$\rho(U):=V_1^{-1}U V_1$ ($U\in H_{k_1,u_1,u_0,k_0}$).
So modulo conjugation by $T_1$ we have the modular group
$\mathbb P SL_2(\mathbb Z)$, presented as
$\langle \td\si,\td\tau\mid \td\si^2=(\td\si\td\tau)^3=1\rangle$.
Here we follow
Stokman \cite[\S2.6]{St2003} (who worked in the basic
representation) and Oblomkov \cite[Prop.~2.4]{Ob}. However note that the
second relation in \cite[(2.13)]{Ob} is not correct.

If we also consider a further automorphism
\be\label{eq:eta-new}
\eta\colon\begin{cases}
\big(V_1,\check V_1,\check V_0,V_0 \big) \mapsto
\left(-V_1^{-1} , -V_1^{-1} \check V_1^{-1}V_1,
-V_0\check V_0^{-1}V_0^{-1},-V_0^{-1}\right)\colon&\\
\qquad\qquad\qquad\qquad\qquad\qquad\qquad\quad\;\;
H_{k_1,u_1,u_0,k_0;q}\to H_{-u_1^{-1},-k_1^{-1},-k_0^{-1},-u_0^{-1};q^{-1}},&\\
\qquad\!(T_1,T_0,Z) \mapsto
\big(T_1^{-1},T_0^{-1},Z^{-1}\big)\colon
\FSH_{a,b,c,d;q}\to\FSH_{a^{-1},b^{-1},c^{-1},d^{-1};q^{-1}},&
\end{cases}
\ee
(note that $\eta$ does not leave $q$ invariant) then we observe further relations $\eta^2=(\si\eta)^2=(\tau\eta)^2=1$. So the group generated by $\sigma$, $\tau$ and $\eta$ acting on the DAHA is modulo conjugation by $T_1$
the \emph{extended modular group}
$\overline{\Gamma}:=\mathbb P GL_2(\mathbb Z)$ with presentation
$$
\langle \td\si,\td\tau,\td\eta\mid
\td\si^2=(\td\si\td\tau)^3=\td\eta^2=(\td\si\td\eta)^2=(\td\tau\\tdeta)^2=1
\rangle.
$$
Here we follow Oblomkov \cite[Prop.~2.4]{Ob}.

Note that the action of $\overline{\Gamma}$ on parameter space is simply
isomorphic with $S_3\times\ZZ_2$.
Also note that, by \eqref{K83} and \eqref{eq:eta-new},
\be
\eta(Y)=T_1^{-1}Y^{-1}T_1,\quad
\eta(D)=q(abcd)^{-1} D,\quad
\eta(X)=X.
\label{K123}
\ee

\subsection{Spherical braid group}
In \cite{Ma2018} it was shown that the DAHA of type $\check{C_1}C_1$ appears
naturally as quantisation of the group algebra of the monodromy group
associated to  the auxiliary linear system for
the sixth Painlev\'e equation. In \cite{Du-Ma} it was shown that the
analytic continuation of the solutions of the sixth Painlev\'e equation is
described by a natural action of the spherical braid group
$B_3=\langle \beta_1^{\pm 1},\beta_2^{\pm 1}\mid
\beta_1\beta_2\beta_1=\beta_2\beta_1\beta_2, (\beta_1\beta_2)^3=1 \rangle$.
By quantisation, assigning $ M_0  \mapsto V_0,\quad 
M_t  \mapsto  V_0\check V_0 V_0^{-1},\quad 
M_1  \mapsto V_1^{-1}\check V_1 V_1$, this action can be brought to the DAHA
as $\beta_1=\tau$ (see \eqref{eq:tau-new}) and
\begin{equation*}
\beta_2\colon\begin{cases}
\big(V_1,\check V_1,\check V_0,V_0 \big) \mapsto
\left(V_1,\check V_0,\check V_0 \check V_1 \check V_0^{-1},V_0\right)\colon
H_{k_1,u_1,u_0,k_0}\to H_{k_1,-u_0^{-1},-u_1^{-1},k_0},&\\
\qquad\!(T_1,T_0,Z) \mapsto
\big(T_1,T_0,-q\sqrt{abc^{-1}d^{-1}}\,T_1^{-1} Z^{-1} T_0\big)\colon\\
\qquad\qquad\qquad\qquad\qquad\qquad\qquad\qquad\qquad
\FSH_{a,b,c,d}\to
\FSH_{-q^{1/2}\td a/c,-q^{1/2}\td a/d,-q^{1/2}\td a/a,-q^{1/2}\td a/b}.&
\end{cases}
\end{equation*}

It is a straightforward computation to check the braid relation
$\beta_1\beta_2\beta_1=\beta_2\beta_1\beta_2$ and that, up to a global conjugation
by $\check{V_1}$, the Coxeter element $(\beta_1\beta_2)^3$ acts like the identity. 
The inversion $\eta$ given by \eqref{eq:eta-new} was studied also in \cite{L-T}
as an extension of the braid group.

Note that in the formulae for $\beta_1$ and $\beta_2$ the element $V_1$ is always
preserved, and $\eta$ keeps its position (similarly both $\si$ and $\tau$ preserve
$V_1$). This is because in fact the action of $B_3$ and $\overline\Gamma$ is
natural on three elements. 

We can express the modular group generators in terms of braids as
$\si= \beta_1 \beta_2 \beta_1$, $\tau=\beta_1$.
\section{The basic DAHA representation and DAHA automorphisms acting on it}
\label{K187}
\subsection{The basic DAHA representation}

The \emph{basic} (or polynomial) \emph{representation}
$\pi_{a,b,c,d}$  \cite{Sa1999}, \cite{N-St},
\cite[(3.10)--(3.12)]{Koo2007}, \cite[\S5.3]{Koo-Ma} of
$\FSH_{a,b,c,d}$ on the space $\FSA:=\CC[z^{\pm1}]$ of Laurent
polynomials sends the 
generators $Z,T_1,T_0$ to operators $Z$,
$T_1=T_1(a,b)$, $T_0=T_0(c,d)$ acting on $\FSA$ as
\begin{align}
(Zf)(z)&:=z\,f(z),
\label{14}\\
(T_1f)(z)&:=\frac{(a+b)z-(1+ab)}{1-z^2}\,f(z)+
\frac{(1-az)(1-bz)}{1-z^2}\,f(z^{-1})\nn\\
&\;=-ab f(z)+z^{-1}(1-az)(1-bz)\,\frac{f(z)-f(z^{-1})}{z-z^{-1}}\,,
\label{15}\\
(T_0f)(z)&:=\frac{(q^{-1}cd+1)z^2-(c+d)z}{q-z^2}\,f(z)
-\frac{(c-z)(d-z)}{q-z^2}\,f(qz^{-1})\nn\\
&\;=-q^{-1}cdf(z)-z^{-1}(c-z)(d-z)\,\frac{f(z)-f(qz^{-1})}{z-qz^{-1}}\,.
\label{16}
\end{align}
This representation is faithful.
Clearly the expressions \eqref{14}--\eqref{16} are invariant under $t_2$
(exchange of $a,b$) and $t_3$ (exchange of $c,d$). We will discuss their
behaviour under $t_4$ in the next subsection.
The basic representation extends to the space $\FSO$ of analytic functions
on $\CC^*$.

Further function spaces on which the basic representation is acting can be
obtained as cyclic DAHA modules generated by a Gaussian or inverse Gaussian.
In this context we define the \emph{Gaussian} \cite[Def.~2.9]{St2003} by
\be
G_e(z):=\frac1{(ez,ez^{-1};q)_\iy}\,,\quad e,z\in\CC^*,\;
z^{\pm1}\notin eq^{\ZZ_{\ge0}}.
\label{K164}
\ee
From \eqref{15} we see that $T_1$ commutes with $h(Z)$ for any
function $h$ if $h(z)=h(z^{-1})$ (compare also with Corollary \ref{K177}).
So $T_1$ commutes with $G_e(Z)$.

Following \cite[Prop.~2.11]{St2003} we now compute the conjugation of
$T_0$ by suitable Gaussians.

\begin{lm}\label{K165}
We have
\begin{align}
G_d(Z)T_0(c,d) G_d(Z)^{-1}&=
q^{-1}c\,Z T_0(c,qd^{-1})^{-1},\label{K166}\\
G_{qd^{-1}}(Z)^{-1}T_0(c,d) G_{qd^{-1}}(Z)&=
q^{-1}c\,T_0(c,qd^{-1})^{-1}Z.\label{K167}
\end{align}
\end{lm}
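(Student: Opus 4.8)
The plan is to prove \eqref{K166} by a direct computation using the explicit action \eqref{16} of $T_0(c,d)$ on $\FSA$ (extended to $\FSO$, which contains the Gaussians), and then obtain \eqref{K167} from \eqref{K166} by a simple parameter substitution. First I would write out, for $f\in\FSO$, the function $g:=G_d(Z)^{-1}f$, i.e. $g(z)=(dz,dz^{-1};q)_\iy f(z)$, apply $T_0(c,d)$ to $g$ using the second (more convenient) form in \eqref{16},
\[
(T_0(c,d)g)(z)=-q^{-1}cd\,g(z)-z^{-1}(c-z)(d-z)\,\frac{g(z)-g(qz^{-1})}{z-qz^{-1}}\,,
\]
and then multiply the result on the left by $G_d(Z)$, i.e. divide by $(dz,dz^{-1};q)_\iy$. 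The key elementary identities needed are the $q$-shift relations for the Gaussian: from $(dz;q)_\iy=(1-dz)(dqz;q)_\iy$ one gets $G_d(z)/G_d(qz^{-1})$ and $G_d(z)/G_d(q^{-1}z)$ as explicit rational factors in $z$. Concretely, writing $w:=qz^{-1}$, one has
\[
\frac{(dw,dw^{-1};q)_\iy}{(dz,dz^{-1};q)_\iy}
=\frac{(dqz^{-1},dq^{-1}z;q)_\iy}{(dz,dz^{-1};q)_\iy}
=\frac{(1-dz^{-1})}{(1-dq^{-1}z)}\,,
\]
after cancelling the common infinite products. Substituting this into the conjugated expression turns everything into rational functions of $z$, and the claim becomes the assertion that this rational operator equals $q^{-1}c\,Z\,T_0(c,qd^{-1})^{-1}$, where $T_0(c,qd^{-1})^{-1}$ is computed from \eqref{K168}, namely $T_0(c,d)^{-1}=-qc^{-1}d^{-1}T_0(c,d)-(1+qc^{-1}d^{-1})$ with $d$ replaced by $qd^{-1}$, so $T_0(c,qd^{-1})^{-1}=-d\,c^{-1}T_0(c,qd^{-1})-(1+dc^{-1})$, and then $T_0(c,qd^{-1})$ itself is read off from \eqref{16} with $d\mapsto qd^{-1}$.

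The main obstacle I expect is purely bookkeeping: after conjugation one has a first-order $q$-difference operator (a multiplication part plus a coefficient times the reflection $z\mapsto qz^{-1}$), and one must match it coefficient-by-coefficient against $q^{-1}c\,Z\,T_0(c,qd^{-1})^{-1}$, which is also such an operator. The reflection $z\mapsto qz^{-1}$ is an involution on the relevant space, so an operator of the form $A(z)+B(z)\,(\text{refl})$ is determined by the pair $(A,B)$; I would verify the two resulting rational-function identities separately. For the coefficient of the reflection this reduces to checking equality of products like $(c-z)(d-z)$ against the corresponding factor for $T_0(c,qd^{-1})$ times the Gaussian ratio times $z$, and for the multiplication part to an identity that, after clearing the common denominator $q-z^2$ (resp.\ its $d\mapsto qd^{-1}$ analogue), becomes polynomial and can be checked by comparing a few coefficients or by evaluating at $z=\pm\sqrt q$ and one more point.

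Finally, \eqref{K167} follows from \eqref{K166} without a new computation. In \eqref{K166} replace the parameter $d$ by $qd^{-1}$; since $q(qd^{-1})^{-1}=d$, the Gaussian $G_{qd^{-1}}$ and the operator $T_0(c,d)=T_0\big(c,q(qd^{-1})^{-1}\big)$ appear, and \eqref{K166} reads $G_{qd^{-1}}(Z)\,T_0(c,qd^{-1})\,G_{qd^{-1}}(Z)^{-1}=q^{-1}c\,Z\,T_0(c,d)^{-1}$. Conjugating both sides by $G_{qd^{-1}}(Z)^{-1}$, taking inverses, and using that $G_{qd^{-1}}(Z)$ commutes with $Z$ (both are multiplication operators) rearranges this into $G_{qd^{-1}}(Z)^{-1}T_0(c,d)G_{qd^{-1}}(Z)=q^{-1}c\,T_0(c,qd^{-1})^{-1}Z$, which is exactly \eqref{K167}. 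Throughout, one should note for rigour that all manipulations take place on $\FSO$ or on a suitable dense invariant subspace where the Gaussians and their inverses act, as in \cite[Prop.~2.11]{St2003}; since $\FSH_{a,b,c,d}$ acts faithfully there, the operator identities are equivalent to the asserted algebraic ones.
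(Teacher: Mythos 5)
Your approach is essentially the paper's: the paper likewise proves the lemma by computing the ratio of Gaussians at $z$ and $qz^{-1}$ and then invoking \eqref{16} and \eqref{K168}, and your derivation of \eqref{K167} from \eqref{K166} via the substitution $d\mapsto qd^{-1}$ (using $q(qd^{-1})^{-1}=d$ and that $Z$ commutes with $G_{qd^{-1}}(Z)$) is a valid, slightly slicker alternative to the paper's second direct computation. One slip to fix: your displayed Gaussian ratio is inverted --- since $G_d(z)=1/(dz,dz^{-1};q)_\iy$, one has $(dqz^{-1},dq^{-1}z;q)_\iy/(dz,dz^{-1};q)_\iy=G_d(z)/G_d(qz^{-1})=(1-q^{-1}dz)/(1-dz^{-1})$, the reciprocal of what you wrote; with that corrected, the coefficient matching goes through as you describe.
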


\begin{proof}
Note that
\[
\frac{G_d(z)}{G_d(qz^{-1})}=\frac{1-q^{-1}dz}{1-dz^{-1}}\quad\mbox{and}\quad
\frac{G_{qd^{-1}}(qz^{-1})}{G_{qd^{-1}}(z)}=\frac q{z^2}\,
\frac{1-q^{-1}dz}{1-dz^{-1}}.
\]
Then use \eqref{16} and \eqref{K168}.
\end{proof}

Since $G_d(Z)$ and $G_{qd^{-1}}(Z)$ commute
with $Z$ and $T_1$, we conclude:

\begin{cor}\label{K170}
If the basic representation acts on a module $\FSM$ consisting of functions on
$\CC^*$
(like $\FSA$ or $\FSO$) then it acts also on
$\FSM G_d^{-1}$ and $\FSM G_{qd^{-1}}$.
\end{cor}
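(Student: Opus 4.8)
The plan is to verify that the module $\FSM G_d^{-1}$ and $\FSM G_{qd^{-1}}$ are genuinely closed under the basic representation. Recall that $\FSM$ is a module for $\FSH_{a,b,c,d}$ under $\pi_{a,b,c,d}$, and it suffices to check that each generator $Z$, $T_1$, $T_0$ maps the new space into itself. For $\FSM G_d^{-1}$ the key idea is conjugation: multiplication by $G_d(Z)^{-1}$ is a linear bijection from $\FSM$ onto $\FSM G_d^{-1}$ (on the level of formal function spaces, ignoring the poles excluded in \eqref{K164}), so $\FSM G_d^{-1}$ carries a representation of $\FSH_{a,b,c,d}$ pulled back along the conjugation $U \mapsto G_d(Z)\, U\, G_d(Z)^{-1}$. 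The task is simply to identify these conjugated operators and check they still preserve $\FSM$ when acting directly (not via conjugation).

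First I would observe that $G_d(Z)$ and $Z$ commute trivially, so $Z$ preserves $\FSM G_d^{-1}$ exactly as it preserves $\FSM$. Second, since $G_d(z) = G_d(z^{-1})$ is symmetric in $z \leftrightarrow z^{-1}$, Corollary \ref{K177} (or the remark following \eqref{15}) gives that $T_1$ commutes with $G_d(Z)$, hence $T_1$ preserves $\FSM G_d^{-1}$ as well. Third, and this is the only substantive point, I would invoke Lemma \ref{K165}, specifically \eqref{K166}: we have
\[
G_d(Z)\, T_0(c,d)\, G_d(Z)^{-1} = q^{-1} c\, Z\, T_0(c, qd^{-1})^{-1}.
\]
The right-hand side is built from $Z$ and $T_0(c, qd^{-1})^{-1}$. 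Now $T_0(c,qd^{-1})$ is the $T_0$-operator of the basic representation $\pi_{a,b,c,qd^{-1}}$, which maps $\FSM$ (being $\FSA$ or $\FSO$, spaces independent of the parameters) into itself by \eqref{16}; and by \eqref{K168} its inverse is a polynomial expression in $T_0(c,qd^{-1})$ with scalar coefficients, hence also preserves $\FSM$. Multiplying by $Z$ and the scalar $q^{-1}c$ keeps us in $\FSM$. Therefore the conjugated operator $G_d(Z) T_0(c,d) G_d(Z)^{-1}$ preserves $\FSM$, which is exactly the statement that $T_0(c,d)$ preserves $\FSM G_d^{-1}$.

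The argument for $\FSM G_{qd^{-1}}$ is entirely parallel, using \eqref{K167} in place of \eqref{K166}: the operator $G_{qd^{-1}}(Z)^{-1} T_0(c,d) G_{qd^{-1}}(Z) = q^{-1} c\, T_0(c,qd^{-1})^{-1} Z$ again lies in the span of operators preserving $\FSM$, while $Z$ and $T_1$ present no difficulty since $G_{qd^{-1}}(Z)$ is symmetric in $z \leftrightarrow z^{-1}$ and commutes with $Z$. I expect the only delicate point to be bookkeeping about domains: $G_d$ and $G_{qd^{-1}}$ have zeros and poles, so one should either work formally with the function spaces as in the statement, or restrict to $z$ avoiding the excluded sets $eq^{\ZZ_{\ge 0}}$, noting that the operators $T_1$, $T_0$ only involve the evaluations $f(z)$, $f(z^{-1})$, $f(qz^{-1})$ and so preserve such conditions after the obvious adjustment of the forbidden locus; since the statement is phrased at the level of function-space modules this causes no real trouble. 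One may also remark that the same reasoning shows $\FSM G_c^{-1}$ and $\FSM G_{qc^{-1}}$ are modules, by the $t_3$-symmetry exchanging $c$ and $d$.
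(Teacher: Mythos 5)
Your proof is correct and follows exactly the route the paper intends: the corollary is stated as an immediate consequence of Lemma \ref{K165} together with the observation that $G_d(Z)$ and $G_{qd^{-1}}(Z)$ commute with $Z$ and $T_1$, which is precisely the conjugation argument you carry out. Your additional checks (that $T_0(c,qd^{-1})^{-1}$ preserves $\FSM$ via \eqref{K168}, and the remark on domains) only make explicit what the paper leaves implicit.
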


\begin{remark}\label{K171}
From \eqref{eq:tau-new}, \eqref{K172}, \eqref{K166} and \eqref{K167}
we see how the automorphisms $\tau$ and $\tau^{-1}$
descend to the basic representation:
\begin{align}
G_d(Z) \pi_{a,b,c,d}(U) G_d(Z)^{-1}&=\pi_{a,b,c,qd^{-1}}(\tau(U)),
\label{K169}\\
G_{qd^{-1}}(Z)^{-1} \pi_{a,b,c,d}(U) G_{qd^{-1}}(Z)&=
\pi_{a,b,c,qd^{-1}}(\tau^{-1}(U)),\label{K173}
\end{align}
where $U\in\FSH_{a,b,c,d}$ in both cases.
Formula \eqref{K169} is the case $n=1$ of \cite[Prop.~2.11]{St2003}.
\end{remark}

\subsection{Action of $t_4$ on the basic representation}
\label{K131}
\begin{lm}\label{K138}
Given an element  $U\in\FSH_{a,b,c,d}$, we have
\be
\frac{G_c(Z)}{G_{qd^{-1}}\,(Z)}\,\pi_{a,b,c,d}(U)\,
\frac{G_{qd^{-1}}(Z)}{G_c(Z)}=
\pi_{a,b,qd^{-1},qc^{-1}}(t_4(U)).
\label{K179}
\ee
In particular,
\be
\frac{G_c(Z)}{G_{qd^{-1}}(Z)}\,T_0(c,d)\,
\frac{G_{qd^{-1}}(Z)}{G_c(Z)}=
q^{-1}cd\,T_0(qd^{-1}, qc^{-1}).
\label{K127}
\ee
Hence also
\begin{align}
\frac{G_c(Z)}{G_{qd^{-1}}(Z)}\,Y(a,b,c,d)\,
\frac{G_{qd^{-1}}(Z)}{G_c(Z)}&=
q^{-1}cd\,Y(a,b,qd^{-1}, qc^{-1}),\label{K5}\\
\frac{G_c(Z)}{G_{qd^{-1}}(Z)}\,D(a,b,c,d)\,
\frac{G_{qd^{-1}}(Z)}{G_c(Z)}&=
q^{-1}cd\,D(a,b,qd^{-1}, qc^{-1}).\label{K6}
\end{align}
\end{lm}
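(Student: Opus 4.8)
\emph{Strategy.} Set $g(Z):=G_c(Z)\,G_{qd^{-1}}(Z)^{-1}$, a multiplication operator, invertible on the relevant completions of $\FSA$ by the reasoning around Corollary~\ref{K170} (so that $\frac{G_c(Z)}{G_{qd^{-1}}(Z)}=g(Z)$ and $\frac{G_{qd^{-1}}(Z)}{G_c(Z)}=g(Z)^{-1}$). From \eqref{K164} each Gaussian satisfies $G_e(z)=G_e(z^{-1})$, hence $g(z)=g(z^{-1})$, so by \eqref{15} (cf.\ Corollary~\ref{K177}) the operator $g(Z)$ commutes with $\pi_{a,b,c,d}(T_1)=T_1(a,b)$, and it obviously commutes with $\pi_{a,b,c,d}(Z)=Z$. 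Consequently $U\mapsto g(Z)\,\pi_{a,b,c,d}(U)\,g(Z)^{-1}$ is an algebra homomorphism on $\FSH_{a,b,c,d}$ (it is $\pi_{a,b,c,d}$ followed by a conjugation automorphism) fixing the operators assigned to $T_1$ and $Z$; and so is $U\mapsto\pi_{a,b,qd^{-1},qc^{-1}}(t_4(U))$, since $t_4(T_1)=T_1$ and $t_4(Z)=Z$ by Table~\ref{tb:okam}. Two algebra homomorphisms out of $\FSH_{a,b,c,d}$ that agree on $T_1$ and $Z$ coincide iff they also agree on $T_0$; thus \eqref{K179} is equivalent to its instance at $U=T_0$, namely $g(Z)\,T_0(c,d)\,g(Z)^{-1}=\pi_{a,b,qd^{-1},qc^{-1}}(t_4(T_0))=q^{-1}cd\,T_0(qd^{-1},qc^{-1})$, which is \eqref{K127}. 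So everything reduces to \eqref{K127}.

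\emph{Proof of \eqref{K127}.} My preferred route is to invoke Lemma~\ref{K165} twice. By \eqref{K176}, $t_4=\tau\,t_3\,\tau^{-1}$, and $t_3$ only exchanges the two Askey--Wilson parameters on which $T_0$ depends; since the basic-representation formulas \eqref{14}--\eqref{16} are symmetric in that pair, $t_3$ descends to the identity operator. Composing the $\tau^{-1}$-descent \eqref{K173} (which is \eqref{K167}) with the $\tau$-descent \eqref{K169} (which is \eqref{K166}), the latter applied with the parameter pair re-indexed so that the conjugating Gaussian is $G_c(Z)$, one obtains
\[
\pi_{a,b,qd^{-1},qc^{-1}}(t_4(U))=G_c(Z)\,G_{qd^{-1}}(Z)^{-1}\,\pi_{a,b,c,d}(U)\,G_{qd^{-1}}(Z)\,G_c(Z)^{-1}=g(Z)\,\pi_{a,b,c,d}(U)\,g(Z)^{-1},
\]
using that $G_c(Z)$ and $G_{qd^{-1}}(Z)$ commute; then \eqref{K127} is the case $U=T_0$ (and \eqref{K179} is the general statement). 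Alternatively, verify \eqref{K127} directly: with $(sf)(z):=f(qz^{-1})$, write $T_0(c,d)=\alpha(Z)+\beta(Z)\,s$ where $\beta(z)=z^{-1}(c-z)(d-z)/(z-qz^{-1})$ and $\alpha(z)=-q^{-1}cd-\beta(z)$ (see \eqref{16}); then $g(Z)\,s\,g(Z)^{-1}=R(Z)\,s$ with $R(z):=g(z)/g(qz^{-1})$, so the conjugate equals $\alpha(Z)+\beta(Z)R(Z)\,s$. The two ratio identities displayed in the proof of Lemma~\ref{K165} give $R(z)=q^{-1}cd\,(qc^{-1}-z)(qd^{-1}-z)\big/\big((c-z)(d-z)\big)$, whence $\beta(z)R(z)=q^{-1}cd\,z^{-1}(qd^{-1}-z)(qc^{-1}-z)/(z-qz^{-1})$ is exactly $q^{-1}cd$ times the $s$-coefficient of $T_0(qd^{-1},qc^{-1})$, and the equality of the multiplication parts, $\alpha(z)=q^{-1}cd\,\alpha'(z)$ with $\alpha'(z)=-q(cd)^{-1}-z^{-1}(qd^{-1}-z)(qc^{-1}-z)/(z-qz^{-1})$, reduces to an elementary identity after clearing the common denominator $z-qz^{-1}$.

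\emph{The consequences.} Since $Y=T_1T_0$ by \eqref{K83} and $g(Z)$ commutes with $T_1(a,b)$, conjugating $Y(a,b,c,d)=T_1(a,b)\,T_0(c,d)$ by $g(Z)$ leaves the first factor alone and, by \eqref{K127}, sends the second to $q^{-1}cd\,T_0(qd^{-1},qc^{-1})$, giving $q^{-1}cd\,Y(a,b,qd^{-1},qc^{-1})$; this is \eqref{K5}. For \eqref{K6}, apply the (algebra-automorphism) conjugation to $D=Y+q^{-1}abcd\,Y^{-1}$ from \eqref{K83} and use \eqref{K5}: the image is $q^{-1}cd\,Y(a,b,qd^{-1},qc^{-1})+q^{-1}abcd\,(q^{-1}cd)^{-1}Y(a,b,qd^{-1},qc^{-1})^{-1}$, and since $q^{-1}abcd\,(q^{-1}cd)^{-1}=ab=q^{-1}cd\cdot q^{-1}ab(qd^{-1})(qc^{-1})$, this is precisely $q^{-1}cd\,D(a,b,qd^{-1},qc^{-1})$. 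The only genuine computation is the conjugation of $T_0$ behind \eqref{K127}; the one place demanding care is the re-indexing of parameters when \eqref{K166}--\eqref{K167} are invoked (or, on the direct route, pinning down the ratio $R$ and its signs).
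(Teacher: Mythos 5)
Your proof is correct and follows essentially the paper's own route: \eqref{K179} is obtained from $t_4=\tau t_3\tau^{-1}$ (formula \eqref{K176}) combined with the Gaussian-conjugation descents \eqref{K169} and \eqref{K173} (with $t_3$ acting trivially in the basic representation), the direct verification of \eqref{K127} from \eqref{16} is exactly the alternative the paper also mentions, and \eqref{K5}--\eqref{K6} follow from \eqref{K83} as in the paper. The extra material (reduction to checking on the generator $T_0$, the explicit ratio $R(z)$, and writing the target representation as $\pi_{a,b,qd^{-1},qc^{-1}}$, which is indeed what the subscript in \eqref{K179} should be) is correct but only fills in details the paper leaves implicit.
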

\begin{proof}
For \eqref{K179}
apply \eqref{K176} together with \eqref{K169} and \eqref{K173}.
Alternatively, \eqref{K127} can be proved by direct computation using \eqref{16}.

For \eqref{K5} and \eqref{K6} use \eqref{K83}. In particular, for \eqref{K6}
note that by \eqref{K83} and \eqref{K5} both sides of \eqref{K6}
are equal to $q^{-1}cd\big(Y(a,b,qd^{-1}, qc^{-1})+
qabc^{-1}d^{-1}Y(a,b,qd^{-1}, qc^{-1})^{-1}\big)$.
\end{proof}

\begin{remark}
The lower horizontal arrow in the commutative diagram \eqref{K216}
is realized in \eqref{K169}, \eqref{K173}, \eqref{K179} as
a simple conjugation by a suitable function of $Z$. This conjugation
acts on the space of linear operators on $\FSA$.
\end{remark}

As a corollary of \eqref{K5} we obtain:

\begin{prop}\label{K141}
Let $\phi\in\FSO G_{qd^{-1}}$ be an eigenfunction of $Y(a,b,c,d)$
with eigenvalue $\la$, then $\frac{G_c}{G_{qd^{-1}}}\phi\in\FSO G_c$ is an
eigenfunction of $Y(a,b,qd^{-1},qc^{-1})$ with eigenvalue
$q(cd)^{-1}\la$.
\end{prop}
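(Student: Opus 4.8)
The plan is to deduce Proposition \ref{K141} directly from the conjugation identity \eqref{K5} of Lemma \ref{K138}, treating it as an elementary consequence about how conjugation of an operator moves eigenfunctions between function spaces. First I would set $M:=G_c/G_{qd^{-1}}$, viewed as a multiplication operator on functions on $\CC^*$; note that $M$ is invertible with inverse $G_{qd^{-1}}/G_c$, and that multiplication by $M$ maps $\FSO G_{qd^{-1}}$ bijectively onto $\FSO G_c$, since $M\cdot G_{qd^{-1}}=G_c$ up to the analytic factor $G_c G_{qd^{-1}}^{-1}\cdot G_{qd^{-1}} = G_c$ and more precisely $M(\FSO G_{qd^{-1}}) = \FSO\,(G_c G_{qd^{-1}}^{-1} G_{qd^{-1}}) = \FSO G_c$. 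So if $\phi\in\FSO G_{qd^{-1}}$ then $M\phi\in\FSO G_c$, giving the claimed membership.

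Next I would rewrite \eqref{K5} as an operator identity on the appropriate space, namely
\[
M\,Y(a,b,c,d)\,M^{-1} = q^{-1}cd\,Y(a,b,qd^{-1},qc^{-1}),
\]
equivalently $Y(a,b,qd^{-1},qc^{-1})\,M = q(cd)^{-1}\,M\,Y(a,b,c,d)$. Applying both sides to $\phi$ and using $Y(a,b,c,d)\phi=\la\phi$ yields
\[
Y(a,b,qd^{-1},qc^{-1})(M\phi) = q(cd)^{-1}M\,(Y(a,b,c,d)\phi) = q(cd)^{-1}\la\,(M\phi),
\]
which is exactly the assertion that $M\phi$ is an eigenfunction of $Y(a,b,qd^{-1},qc^{-1})$ with eigenvalue $q(cd)^{-1}\la$.

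The only genuine point requiring care — and the one I would flag as the main obstacle — is the bookkeeping of function spaces: one must check that all operators in sight ($Y$ and its building blocks $T_1,T_0$, and the multiplication operators $G_c(Z)^{\pm1}$, $G_{qd^{-1}}(Z)^{\pm1}$) genuinely act on the spaces $\FSO G_{qd^{-1}}$ and $\FSO G_c$, so that the formal manipulation above is legitimate there and not merely on $\FSA$. This is handled by Corollary \ref{K170} (the basic representation acts on $\FSO G_{qd^{-1}}$ and, with $d$ replaced by $qc^{-1}$ and $c$ by $qd^{-1}$, on $\FSO G_c$), together with the observation that $G_d(Z)$-type Gaussians commute with $Z$ and $T_1$; the identity \eqref{K179} from which \eqref{K5} is derived was itself obtained by composing \eqref{K169} and \eqref{K173}, which carry exactly this space-level information. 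Once that is in place the proposition is immediate, so the write-up would be just the two displayed lines above preceded by the identification of $M$ and its mapping property.
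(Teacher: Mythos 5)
Your proposal is correct and follows exactly the route the paper takes: the paper states Proposition \ref{K141} as an immediate corollary of the conjugation identity \eqref{K5}, and your two displayed lines (rewriting \eqref{K5} as $Y(a,b,qd^{-1},qc^{-1})\,M=q(cd)^{-1}M\,Y(a,b,c,d)$ with $M=G_c/G_{qd^{-1}}$ and applying it to $\phi$) are precisely that deduction, with the correct eigenvalue. The added remarks on the mapping $M\colon\FSO G_{qd^{-1}}\to\FSO G_c$ and on Corollary \ref{K170} are sound and only make explicit what the paper leaves implicit.
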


If we have, for $(a,b,c,d,\la)$ in a certain region of $\CC^5$, a family of
eigenfunctions $\phi_\la^{(a,b,c,d)}$ of $Y(a,b,c,d)$ with
$\phi_\la^{(a,b,c,d)}$
analytic in $(a,b,c,d,\la)$ for generic values of $(a,b,c,d)$
then it is not yet guaranteed by Proposition \ref{K141} that
\[
\frac{G_c}{G_{qd^{-1}}}\,\phi_\la^{(a,b,c,d)}=\const
\phi_{q(cd)^{-1}\la}^{(a,b,qd^{-1},qc^{-1})}.
\]
In \eqref{K51} we will have an
instance that such an equality holds.
\subsection{Action of $\si$ on the basic representation}
Different from what we saw for $\tau$ and $t_4$, the action of $\si$ on
the basic representation cannot simply be given by an elementary conjugation.
Following Stokman \cite[(3.1)]{St2003} we see that having such an action of
$\si$ amounts to giving a corresponding Fourier transform, called
\emph{difference Fourier transform}.

\begin{df}
Let $V$ and $W_\si$ be suitable function spaces on $\CC^*$ on
which $\pi_{a,b,c,d}$ and $\pi_{\td a,\td b,\td c, \td d}$, respectively, act.
A linear map $F\colon V\to W_\si$ is called a
\emph{difference Fourier transform} if
\be
F(\pi_{a,b,c,d}(U)f)=\pi_{\td a,\td b,\td c,\td d}(\si(U))(F(f))
\label{K215}
\ee
for all $f\in V$ and all $U\in \FSH_{a,b,c,d}$.
\end{df}

In Stokman's paper this is defined for general rank, but in
\cite[\S8]{St2003} it is worked out in detail in the rank one case.
There $F$ is an integral transform
with integral kernel given by the Cherednik kernel \eqref{K206} or its
symmetric form \eqref{K110}.
\subsection{The spherical subalgebra and the Zhedanov algebra}
In this subsection let $\FSM=\FSA G$ or $\FSO G$ with $G=1$ or $G_{qd^{-1}}$
or $G_d^{-1}$.
We call $f\in\FSM$ \emph{symmetric} if $f(z)=f(z^{-1})$ and we notate
$\FSM^+$ for the subspace of symmetric functions in $\FSM$.
\begin{lm}\label{K174}
For any $g\in \FSM$
\begin{align}
&(T_1 g)(z) = -a b g(z) \iff   g\in\FSM^+\label{eq:T1ab}\\
&(T_1 g)(z) = -g (z) \iff    g(z) =\frac{(1-az)(1-bz)}{z}\,h(z)
\hbox{ with } h\in\FSM^+.\label{eq:T11}
\end{align}
\end{lm}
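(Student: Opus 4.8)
The plan is to work directly from the explicit formula \eqref{15} for the action of $T_1$ on the module $\FSM$. Writing $T_1 g$ in its second form,
\[
(T_1 g)(z) = -ab\,g(z) + z^{-1}(1-az)(1-bz)\,\frac{g(z)-g(z^{-1})}{z-z^{-1}}\,,
\]
makes clear that the "leading" term is $-ab\,g$ and that the whole deviation from this is governed by the divided difference $\dfrac{g(z)-g(z^{-1})}{z-z^{-1}}$. So for \eqref{eq:T1ab} I would simply observe that $(T_1g)(z) = -ab\,g(z)$ is equivalent to the second term vanishing, i.e. to $z^{-1}(1-az)(1-bz)\bigl(g(z)-g(z^{-1})\bigr)=0$ identically on $\CC^*$. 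Since $(1-az)(1-bz)/z$ is a fixed nonzero meromorphic function (it vanishes only at $z=a^{-1},b^{-1}$, a discrete set), this forces $g(z)=g(z^{-1})$, that is $g\in\FSM^+$; and conversely if $g\in\FSM^+$ then the divided difference is literally zero, so $(T_1g)(z)=-ab\,g(z)$. One should note $g(z)-g(z^{-1})$ is an element of the same function class and vanishing off a discrete set forces it to vanish everywhere, which is where the fact that $\FSM$ consists of honest functions (Laurent polynomials or analytic functions on $\CC^*$) is used.

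For \eqref{eq:T11} the natural approach is the substitution $g(z) = z^{-1}(1-az)(1-bz)\,h(z)$ and to compute $T_1 g$ in terms of $h$. The point is that the prefactor $\varphi(z):=z^{-1}(1-az)(1-bz)$ satisfies $\varphi(z^{-1}) = z\cdot z^{-1}(1-az^{-1})(1-bz^{-1}) \cdot$ — more usefully, one checks the reflection identity $z^{-1}(1-az)(1-bz) + z(1-az^{-1})(1-bz^{-1}) = a+b-(1+ab)(z+z^{-1})/\,\cdots$; rather than hunting for the slick identity I would just substitute and simplify. A cleaner route: from \eqref{15} in its first form,
\[
(T_1 g)(z)=\frac{(a+b)z-(1+ab)}{1-z^2}\,g(z)+\frac{(1-az)(1-bz)}{1-z^2}\,g(z^{-1}),
\]
plug in $g(z)=\varphi(z)h(z)$ with $h$ symmetric, use $h(z^{-1})=h(z)$, and collect the coefficient of $h(z)$; the two terms combine, via the identity
\[
\frac{(a+b)z-(1+ab)}{1-z^2}\cdot\frac{(1-az)(1-bz)}{z} + \frac{(1-az)(1-bz)}{1-z^2}\cdot z(1-az^{-1})(1-bz^{-1}) = -\,\frac{(1-az)(1-bz)}{z}\,,
\]
to give exactly $(T_1g)(z) = -\varphi(z)h(z) = -g(z)$. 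Conversely, given $(T_1g)(z)=-g(z)$, I would run the computation in reverse: set $h(z):=z\,g(z)/\bigl((1-az)(1-bz)\bigr)$, which a priori is a function on $\CC^*$ with possible poles at $z=a^{-1},b^{-1}$; the eigenvalue equation $T_1g=-g$ combined with the first form of \eqref{15} yields, after rearrangement, that $g(z)/\varphi(z) = g(z^{-1})/\varphi(z^{-1})$, i.e. $h(z)=h(z^{-1})$, and separately that $g$ vanishes appropriately at $z=a^{-1},b^{-1}$ so that $h$ extends to lie in $\FSM$. Membership in $\FSM$ (no spurious poles, right growth/polynomiality) then follows because $g\in\FSM$ and $g$ has the required zeros.

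The main obstacle is the converse direction of \eqref{eq:T11}: from the bare eigenvalue equation $T_1 g = -g$ one must extract both that $h:=zg/((1-az)(1-bz))$ is symmetric and that it actually lies in $\FSM$ (no new poles at $z=a^{-1},b^{-1}$). The symmetry is the algebraic heart: rewriting $T_1 g=-g$ as $(ab-1+\cdots)$ — concretely, $-abg(z) + \varphi(z)\frac{g(z)-g(z^{-1})}{z-z^{-1}} = -g(z)$ gives $\frac{g(z)-g(z^{-1})}{z-z^{-1}} = -\frac{(1-ab)}{\varphi(z)}\,g(z)$, and one massages this into $\dfrac{g(z)}{\varphi(z)} = \dfrac{g(z^{-1})}{\varphi(z^{-1})}$ using $\varphi(z)-\varphi(z^{-1}) = -(1-ab)(z-z^{-1})/\,$(something) — I expect the clean statement to be $\varphi(z) - \varphi(z^{-1}) = (z-z^{-1})\bigl(\ldots\bigr)$, so that the cross-multiplied identity collapses. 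The regularity of $h$ at $a^{-1}$ and $b^{-1}$ then comes for free: plugging $z=a^{-1}$ into $g(z)/\varphi(z)=g(z^{-1})/\varphi(z^{-1})$ and noting $\varphi(a^{-1})=0$ while $\varphi(a)\ne 0$ forces $g(a^{-1})=0$, hence $h$ is regular there, and similarly at $b^{-1}$; together with $g\in\FSM$ this places $h$ in $\FSM^+$. I would present \eqref{eq:T1ab} first (it is essentially immediate from the divided-difference form of \eqref{15}) and then \eqref{eq:T11}, reusing the same identity for $\varphi$ in both directions.
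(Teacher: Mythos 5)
Your proposal is correct and follows essentially the same route as the paper: both directions of \eqref{eq:T1ab} from the divided-difference form of \eqref{15}, and for \eqref{eq:T11} the substitution $g=\varphi h$ with $\varphi(z)=z^{-1}(1-az)(1-bz)$ together with the observation that $T_1g=-g$ forces $\varphi(z)g(z^{-1})=\varphi(z^{-1})g(z)$, hence $g(a^{-1})=g(b^{-1})=0$ and the symmetry of $h$. The one identity you left unfinished does hold in the form you anticipated, namely $\varphi(z)-\varphi(z^{-1})=-(1-ab)(z-z^{-1})$, which is exactly what is needed to close the converse of \eqref{eq:T11} (using the standing genericity of $a,b$ so that $\varphi(a),\varphi(b)\neq0$).
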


\begin{proof}
Observe that,
for $g\in  \FSM$,
\begin{equation*}
((T_1 + ab)g)(z) = \frac{ (1-az)(1-bz)}{1-z^2}\,(g(z^{-1})-g(z)).
\end{equation*}
This immediately implies \eqref{eq:T1ab}.
We can also write
\begin{equation}
((T_1 + 1)g)(z)=
\frac{(1-az)(1-bz)}{1-z^2}\,g(z^{-1}) -\frac{(a-z)(b-z)}{1-z^2}\,g(z).
\label{eq:extT0}
\end{equation}
If $g(z)=z^{-1}(1-az)(1-bz) h(z)$ with $h\in\FSM^+$ then the
right-hand side of \eqref{eq:extT0} is~$0$. Vice-versa, for $a\neq a^{-1}$ and
$b\neq b^{-1}$, if the right-hand side of \eqref{eq:extT0} is identically $0$,
then $g(a^{-1})=0$ and $g(b^{-1})=0$. Hence we
can write $g(z)=z^{-1}(1-az)(1-bz) h(z)$ with $h\in\FSM$. Substituting
this in 
$$
\frac{(1-az)(1-bz)}{1-z^2} g(z^{-1}) 
 - \frac{(a-z)(b-z)}{1-z^2} g(z)\equiv 0
$$
yields that
$h(z)=h(z^{-1})$, which proves \eqref{eq:T11}.
\end{proof}

 \begin{cor}\label{K175}
 Given any $g\in \FSM$, there exist two elements $g_1,g_2\in  \FSM^+$
 such that 
\be\label{eq:m-dec}
 g(z)= g_1(z) - \frac{ (1-az)(1-bz)}  {z} g_2(z).
\ee
\end{cor}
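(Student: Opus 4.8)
The plan is to decompose an arbitrary $g \in \FSM$ according to the eigenspaces of $T_1$ on $\FSM$. Since $T_1$ satisfies the quadratic relation $(T_1 + ab)(T_1 + 1) = 0$ (this is \eqref{sahi1}, and it continues to hold in the representation on $\FSM$), and since $ab \ne 1$ by our genericity assumptions on the parameters (Assumption \ref{K213} guarantees $a,b$ and their inverses are distinct, so $ab \ne 1$), the operator $T_1$ is diagonalizable on $\FSM$ with eigenvalues $-ab$ and $-1$. Concretely, one writes the spectral projections
\[
P_{-ab} := \frac{T_1 + 1}{1 - ab}, \qquad P_{-1} := \frac{T_1 + ab}{ab - 1},
\]
so that $g = P_{-ab}\,g + P_{-1}\,g$ with $T_1(P_{-ab}\,g) = -ab\,(P_{-ab}\,g)$ and $T_1(P_{-1}\,g) = -(P_{-1}\,g)$. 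One should check that $P_{-ab}\,g$ and $P_{-1}\,g$ indeed lie in $\FSM$; this is immediate because $T_1$ preserves $\FSM$ (for $\FSM = \FSA$ or $\FSO$ this is clear from \eqref{15}, and for $\FSM = \FSA G_{qd^{-1}}$ or $\FSO G_{qd^{-1}}$ or $\FSA G_d^{-1}$ it follows from Corollaries \ref{K170} and the fact that $T_1$ commutes with $G_{qd^{-1}}(Z)$ and $G_d(Z)$).

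Next I would identify these two eigenspace components using Lemma \ref{K174}. By \eqref{eq:T1ab}, the $-ab$-eigenspace of $T_1$ on $\FSM$ is exactly $\FSM^+$, so we may set $g_1 := P_{-ab}\,g \in \FSM^+$. By \eqref{eq:T11}, the $-1$-eigenspace consists precisely of functions of the form $z^{-1}(1-az)(1-bz)\,h(z)$ with $h \in \FSM^+$; applying this to $P_{-1}\,g$ gives a unique $h \in \FSM^+$ with $P_{-1}\,g(z) = z^{-1}(1-az)(1-bz)\,h(z)$, and we set $g_2 := -h \in \FSM^+$. Then
\[
g(z) = g_1(z) + P_{-1}g(z) = g_1(z) - \frac{(1-az)(1-bz)}{z}\,g_2(z),
\]
which is exactly \eqref{eq:m-dec}.

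There is really no serious obstacle here — the statement is an essentially formal consequence of the quadratic relation for $T_1$ plus Lemma \ref{K174}. The one point that needs a word of care is the applicability of the "vice versa" direction of \eqref{eq:T11}: that argument in the proof of Lemma \ref{K174} used $a \ne a^{-1}$ and $b \ne b^{-1}$ to conclude that a function annihilated by $T_1 + 1$ must vanish at $z = a^{-1}$ and $z = b^{-1}$ and hence be divisible by $(1-az)(1-bz)$. Both conditions hold under our standing genericity hypotheses on $a,b,c,d$, so Lemma \ref{K174} applies verbatim. Alternatively, and perhaps more transparently, one can simply write the decomposition explicitly without invoking projections: set $g_1(z) := \tfrac12\big(g(z) + g(z^{-1})\big)$-style symmetrizations are not quite right because the relevant involution is the $T_1$-action, not plain $z \leftrightarrow z^{-1}$; so the cleanest route remains the spectral-projection argument above. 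I would present the projection computation as the proof, noting that uniqueness of $g_1, g_2$ also follows since the two summands lie in complementary eigenspaces.
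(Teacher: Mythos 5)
Your proof is correct and is essentially the paper's own argument: the explicit formulas \eqref{eq:mg1}--\eqref{eq:mg2} in the paper are exactly your spectral projections $P_{-ab}\,g$ and (up to the sign and the prefactor $z/((1-az)(1-bz))$) $P_{-1}\,g$, and the identification of the two eigenspaces via Lemma \ref{K174} is the same step. The only difference is presentational, and your remark about needing $a\ne a^{-1}$, $b\ne b^{-1}$ and $ab\ne 1$ is covered by the standing genericity conventions.
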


\begin{proof}
 We define 
\begin{align}
 g_1 &= \frac{1}{1-a b}\left(T_1 g+ g\right),\label{eq:mg1}\\
 g_2 &= \frac{z}{(1-a z)(1-b z)}\frac{1}{1-a b}\left(T_1 g+a b \, g\right),
 \label{eq:mg2}
 \end{align}
 which gives \eqref{eq:m-dec}. To prove that $g_1$ and $g_2$ are symmetric, it is enough to show that $T_1 g_1=- a b\, g_1$, and 
 $T_1 h_2 = - h_2$
 with $h_2(z)=\frac{(1-a z)(1-b z)}{z}\,g_2(z)$;
 both relations follow immediately from~\eqref{sahi1}.
 \end{proof}

Let $e:=(1-ab)^{-1}(T_1+1)$. By \eqref{sahi1} we have $e^2=e$.

\begin{df} \label{K214}
The \emph{spherical subalgebra} of $\FSH_{a,b,c,d}$ is the image
$\mathcal{SH}_{a,b,c,d}$ of the projection operator $U\mapsto eUe$.
\end{df}
\begin{prop}\quad\\
\textup{\textbf{(a)}}
$\mathcal{SH}_{a,b,c,d}$ is generated as an algebra
by its elements $De$ and $Xe$.\\
\textup{\textbf{(b)}}
In the basic representation on $\FSM$ the subspace
$\FSM^+$ of symmetric functions is
invariant under the action of $\mathcal{SH}_{a,b,c,d}$. The corresponding
representation is faithful.\\
\textup{\textbf{(c)}}
For $f\in\FSM^+$ we have
\be
\pi_{a,b,c,d}(De)f=D(a,b,c,d) f=L^{a,b,c,d}f,
\label{K146}
\ee
where $L=L^{a,b,c,d}$ is the \emph{Askey--Wilson operator
\begin{align}
(Lf)(z)&=L_z\,f(z):=
\big(1+q^{-1}abcd\big)f(z)+\frac{(1-az)(1-bz)(1-cz)(1-dz)}{(1-z^2)(1-qz^2)}\,
\bigl(f(qz)-f(z)\bigr)\nn\\*
&\qquad+\frac{(a-z)(b-z)(c-z)(d-z)}{(1-z^2)(q-z^2)}\,
\bigl(f(q^{-1}z)-f(z)\bigr),\qquad f(z)=f(z^{-1}).\label{K72}
\end{align}}
Also
\[
(\pi_{a,b,c,d}(Xe)f)(z)=(X f)(z)=(z+z^{-1})f(z).
\]
\textup{\textbf{(d)}}
The algebra generated by the operators $L^{a,b,c,d}$ and $X$ acting
on $\FSM^+$ is isomorphic with
the quotient $AW(3,Q_0)$ of the Zhedanov algebra
$AW(3)$ (as defined in \cite[\S2]{Koo2007}) with the ideal generated by
$Q-Q_0$ (see \cite[(2,3), (2,8)]{Koo2007}; here $Q$ is the Casimir element).
The generators $K_0$ and $K_1$ of $AW(3,Q_0)$ correspond under this isomorphism
with $L^{a,b,c,d}$ and $X$, respectively.
\label{K162}
\end{prop}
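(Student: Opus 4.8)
The plan is to prove the four parts in order, using the decomposition from Corollary \ref{K175} and the commutation results from Lemma \ref{K178} and Corollary \ref{K177} as the main tools, together with the explicit operators \eqref{14}--\eqref{16}.

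\textbf{Part (b).} First I would show $\FSM^+$ is invariant under $e\FSH_{a,b,c,d}e$. Since $e=(1-ab)^{-1}(T_1+1)$, \eqref{eq:T1ab} shows that for any $g\in\FSM$ we have $eg\in\FSM^+$ iff $T_1(eg)=-ab\,(eg)$; but $T_1 e=-ab\,e$ follows immediately from \eqref{sahi1} (indeed $(T_1+ab)(T_1+1)=0$ gives $(T_1+ab)e=0$). Hence $\operatorname{im}(e)=\FSM^+$ acting on $\FSM$, and for $U\in\FSH$ the operator $eUe$ maps $\FSM$ into $\FSM^+$ and kills the complement, so its restriction to $\FSM^+$ is the natural action. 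Faithfulness: the basic representation $\pi_{a,b,c,d}$ on $\FSM$ is faithful (stated after \eqref{16}), and $eUe=0$ as an operator on $\FSM$ forces $eUe=0$ in $\FSH$ by faithfulness; restricting to $\FSM^+$ loses nothing because $eUe$ vanishes on a complement of $\FSM^+$, so the restriction is faithful as well.

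\textbf{Part (c).} This is a direct computation. For $f\in\FSM^+$, $ef=f$, so $\pi_{a,b,c,d}(De)f=\pi_{a,b,c,d}(D)(ef)=\pi_{a,b,c,d}(D)f$. I would then compute $\pi_{a,b,c,d}(D)f=\pi_{a,b,c,d}(Y+q^{-1}abcd\,Y^{-1})f$ using $Y=T_1T_0$ and the formulas \eqref{15}, \eqref{16}, and check that on symmetric $f$ this collapses to the Askey--Wilson operator \eqref{K72}. The cleanest route is to use that $D$ commutes with $T_1$ (Lemma \ref{K178}), so $\pi(D)$ preserves $\FSM^+$, and then simply verify the second-order $q$-difference expression by substituting; the terms $f(z^{-1})$, $f(qz^{-1})$ etc.\ simplify using $f(z)=f(z^{-1})$. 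Similarly $\pi(Xe)f=\pi(X)f=(Z+Z^{-1})f=(z+z^{-1})f(z)$ is immediate from \eqref{14}.

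\textbf{Part (a).} Given (c), the operators $De$ and $Xe$ act on $\FSM^+$ as $L$ and multiplication by $z+z^{-1}$. I would argue that the algebra they generate, acting on $\FSM^+=\FSA^+ G$, is all of $e\FSH e$ by a faithfulness/dimension-count argument: using the PBW-type basis of $\FSH_{a,b,c,d}$ (from Sahi \cite{Sa1999} / \cite{N-St}), one identifies $e\FSH e$ with a free module and checks that iterated products of $De$ and $Xe$ already span it; alternatively, and more in the spirit of the paper, one invokes the known identification of $e\FSH e$ with the Zhedanov algebra $AW(3,Q_0)$ from \cite{Koo2007} where $K_0,K_1$ are exactly the generators, which is precisely what part (d) asserts.

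\textbf{Part (d).} Here I would cite \cite[\S2]{Koo2007}: the Zhedanov algebra $AW(3)$ is defined by explicit relations among $K_0,K_1,K_2$ (with $K_2$ a commutator of $K_0,K_1$) depending on structure constants and a Casimir $Q$; setting $Q=Q_0$ gives $AW(3,Q_0)$. The claim is that $L^{a,b,c,d}$ and multiplication by $z+z^{-1}$, acting on $\FSM^+$, satisfy exactly these relations with the appropriate structure constants (expressed in $a,b,c,d,q$) and with $Q$ taking the specific scalar value $Q_0$. The verification that these two operators satisfy the $AW(3)$ relations is the classical computation of \cite{Koo2007} (it is how the Zhedanov algebra arises from Askey--Wilson polynomials), and that the representation is faithful on $AW(3,Q_0)$ follows from part (b) together with part (a). \textbf{The main obstacle} is part (a)/(d): showing the spherical subalgebra is \emph{exactly} generated by $De$ and $Xe$ (not just contains them) and pinning down that the relevant quotient of $AW(3)$ is the one with $Q=Q_0$ for the precise scalar $Q_0$ — this requires either a careful PBW/basis argument in $\FSH_{a,b,c,d}$ or a direct appeal to the computations of \cite{Koo2007}, and getting the structure constants and $Q_0$ to match is the delicate bookkeeping step.
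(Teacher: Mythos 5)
The paper does not actually prove this proposition; it simply cites \cite[\S3]{Koo2008} for parts (a) and (b), \cite[Proposition 5.8]{N-St} for part (c), and \cite[\S2]{Koo2007} for part (d). Your sketch is consistent with that, and the pieces you work out yourself are sound: $(T_1+ab)e=0$ from \eqref{sahi1} gives $T_1e=-ab\,e$, so by \eqref{eq:T1ab} (together with $ef=f$ for symmetric $f$) the operator $\pi(e)$ is a projection of $\FSM$ onto $\FSM^+$, which yields both the invariance in (b) and the reduction of faithfulness on $\FSM^+$ to faithfulness on $\FSM$; and (c) is indeed the direct computation of $\pi(Y+q^{-1}abcd\,Y^{-1})f$ for symmetric $f$, exactly the content of the cited result of Noumi--Stokman. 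Two cautions. First, as you yourself flag, the hard content is (a) and (d), and one of your two proposed routes for (a) is circular: you suggest deducing (a) from the identification with $AW(3,Q_0)$ in (d), while the faithfulness assertion in (d) is said to follow from (a) and (b); you must therefore commit to the non-circular route (the spanning argument inside $e\FSH_{a,b,c,d}e$, which is what \cite{Koo2008} actually carries out, with the Zhedanov relations and the value of $Q_0$ verified in \cite{Koo2007}). Second, the paper asserts faithfulness of the basic representation only on $\FSA$; for the twisted modules $\FSM=\FSA G$ or $\FSO G$ with $G\ne1$ your faithfulness step needs the extra (easy) remark that these representations are obtained from the one on $\FSA$ by restriction to $\FSO\supset\FSA$ or by the conjugations \eqref{K169}, \eqref{K173}, so that faithfulness is preserved.
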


For the proof of parts (a), (b) see \cite[\S3]{Koo2008}, for part (c), see \cite[Proposition 5.8]{N-St}.
For the proof of part (d) see \cite[\S2]{Koo2007}. See also \cite{T}.

\subsection{Action of $W(D_4)$ on the spherical subalgebra
$\mathcal{SH}_{a,b,c,d}$}\label{sec:D4Zh}

In Remark \ref{rmkWD4}, we observed that $\hat t_0,t_2,t_3,t_4$, acting on
$(a,b,c,d)$ parameter space, generate $W(D_4)$. The action of these
automorphisms on $D$ and $T_1$ in the basic representation induces
an action on $D$. However, while $t_2,t_3,t_4$ preserve $T_1$ and therefore
automatically 
restrict to the spherical subalgebra, $\hat t_0$ doesn't preserve $T_1$ and
therefore its restriction to the spherical 
subalgebra is more complicated and it will be explored in [P2].

From the expression for $L$ in \eqref{K72} we see that $L$ is
invariant under permutations of $a,b,c,d$ while acting on the space
$\FSA^+$ of symmetric Laurent polynomials and that it is only invariant
under permutations of $a,b,c$ while acting on $\FSO^+ G_{qd^{-1}}$. Then $t_4$
also induces an action on $L$. Indeed, by Lemma \ref{K138} and \eqref{K146}
we have
\be
\frac{G_c(Z)}{G_{qd^{-1}}\,(Z)}\,L^{a,b,c,d}\,
\frac{G_{qd^{-1}}(Z)}{G_c(Z)}=
q^{-1}cd\,L^{a,b,qd^{-1}, qc^{-1}}.
\label{K0}
\ee

\begin{remark}
On $L$ considered as a formal operator the permutations of $a,b,c,d$ together
with $t_4$ generate a symmetry group which is isomorphic with
$W(D_4)$, see  \cite[\S2.3]{Koo2023}. This symmetry group can also be
identified with the $W(D_4)$ action discussed in \cite[\S4]{Cr-F-Gab-V},
as we will show in our upcoming paper [P2].
\end{remark}

As a corollary to \eqref{K0} we obtain:

\begin{prop}\label{K75}
Let $\phi\in\FSO^+ G_{qd^{-1}}$ be an eigenfunction of $L^{a,b,c,d}$
with eigenvalue $\la$, then  $\frac{G_c}{G_{qd^{-1}}}\,\phi\in\FSO^+ G_c$  is an
eigenfunction of $L^{a,b,qd^{-1},qc^{-1}}$ with eigenvalue
$q(cd)^{-1}\la$.
\end{prop}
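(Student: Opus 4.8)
The plan is to read off Proposition \ref{K75} as an immediate consequence of the operator identity \eqref{K0}, which has already been established via Lemma \ref{K138} and \eqref{K146}. The key point is that conjugation by a fixed invertible operator on a function space intertwines eigenfunctions in a controlled way: if $M' = c\, S\, M\, S^{-1}$ as operators, then $M\phi = \la\phi$ implies $M'(S\phi) = c\,S M S^{-1} S\phi = c S M\phi = c\la\, S\phi$, so $S\phi$ is an eigenfunction of $M'$ with eigenvalue $c\la$.

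First I would set $S := G_c(Z)/G_{qd^{-1}}(Z)$, viewed as a multiplication operator on functions on $\CC^*$, so that $S^{-1} = G_{qd^{-1}}(Z)/G_c(Z)$ is its inverse. This operator maps $\FSO^+ G_{qd^{-1}}$ to $\FSO^+ G_c$: multiplication by $G_c/G_{qd^{-1}}$ sends a function of the form (symmetric analytic function)$\cdot G_{qd^{-1}}$ to (symmetric analytic function)$\cdot G_c$, since $G_c/G_{qd^{-1}}$ times $G_{qd^{-1}}$ equals $G_c$ times a ratio $G_{qd^{-1}}/G_{qd^{-1}}$... more precisely one checks $\tfrac{G_c}{G_{qd^{-1}}}\cdot(\text{sym. analytic})\cdot G_{qd^{-1}} = (\text{sym. analytic})\cdot G_c$, and symmetry under $z\mapsto z^{-1}$ is preserved because $S$ itself is a symmetric function of $z$. (One should note the domains: the zeros/poles of the Gaussians lie in $eq^{\ZZ_{\ge0}}$ as in \eqref{K164}, and for generic parameters these cause no trouble.)

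Then I would invoke \eqref{K0}, namely $S\, L^{a,b,c,d}\, S^{-1} = q^{-1}cd\, L^{a,b,qd^{-1},qc^{-1}}$, as operators on $\FSO^+ G_{qd^{-1}}$ (resp.\ with image $\FSO^+ G_c$). Given $\phi\in\FSO^+ G_{qd^{-1}}$ with $L^{a,b,c,d}\phi = \la\phi$, apply $S$ to both sides and use the identity to get
\[
L^{a,b,qd^{-1},qc^{-1}}(S\phi) = (qc^{-1}d^{-1})\, S\, L^{a,b,c,d}\,\phi = (qc^{-1}d^{-1})\,\la\, (S\phi),
\]
i.e.\ $S\phi = \tfrac{G_c}{G_{qd^{-1}}}\phi$ is an eigenfunction of $L^{a,b,qd^{-1},qc^{-1}}$ with eigenvalue $q(cd)^{-1}\la$, which is exactly the assertion. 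This mirrors the argument for Proposition \ref{K141}, with $Y$ replaced by $L = \pi(De)$ and the spaces $\FSO G_{\bullet}$ replaced by their symmetric subspaces $\FSO^+ G_{\bullet}$; the restriction to symmetric functions is legitimate precisely because $De$ preserves $\FSM^+$ by Proposition \ref{K162}(b) and $S$ preserves symmetry.

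There is essentially no obstacle here: the content is entirely in \eqref{K0}, which is already proved. The only thing warranting a word of care is the bookkeeping of function spaces — that $S$ genuinely maps $\FSO^+ G_{qd^{-1}}$ onto $\FSO^+ G_c$ and that the conjugation identity \eqref{K0} is valid on this space rather than merely as a formal operator identity. Both follow from Corollary \ref{K170} (which guarantees the basic representation acts on the modules $\FSM G_d^{-1}$ and $\FSM G_{qd^{-1}}$) together with the observation that $G_c(Z)$ commutes with $Z$ and $T_1$ just as $G_d(Z)$ and $G_{qd^{-1}}(Z)$ do, so the same reasoning that produced Corollary \ref{K170} applies to $G_c$ as well. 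Hence the proof is a two-line deduction once \eqref{K0} and the domain statements are in place.
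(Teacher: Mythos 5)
Your proposal is correct and follows exactly the route the paper intends: Proposition \ref{K75} is stated there as an immediate corollary of the conjugation identity \eqref{K0}, and your two-line intertwining argument (with the added care about the spaces $\FSO^+ G_{qd^{-1}}$ and $\FSO^+ G_c$) is precisely that deduction, made explicit. No gaps.
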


This Proposition is equivalent to Suslov's observation in
\cite[\S6.3, item 2]{Su1989}, already met just after \eqref{K80}.

\begin{remark}\label{K186}
From Proposition \ref{K141} it follows that 
if for $(a,b,c,d,\la)$ in a certain region of $\CC^5$, one has a family of
eigenfunctions $\phi_\la^{(a,b,c,d)}$ of $L^{a,b,c,d}$ with
$\phi_\la^{(a,b,c,d)}$
analytic in $(a,b,c,d,\la)$ for generic values of $(a,b,c,d)$
then $\frac{G_c}{G_{qd^{-1}}}\,\phi_\la^{(a,b,c,d)}$ defines a family of 
eigenfunctions of $L^{(a,b,qd^{-1},qc^{-1})}$ . However, this doesn't guarantee that 
we may have a relation of the following form
\[
\frac{G_c}{G_{qd^{-1}}}\,\phi_\la^{(a,b,c,d)}=\const
\phi_{q(cd)^{-1}\la}^{(a,b,qd^{-1},qc^{-1})}.
\]
In \eqref{K25} we will show an
instance that such an equality holds.
\end{remark}

\section{Some special functions}
\subsection{Askey--Wilson polynomials}\label{suse:AWpol}

Denote and renormalize \emph{Askey--Wilson polynomials} (or briefly
\emph{AW polynomials}) as symmetric Laurent polynomials
\be\label{K71}
E_n^+(z)=E_n^+(z;a,b,c,d)
:=\qhyp43{q^{-n},q^{n-1}abcd,az,az^{-1}}{ab,ac,ad}{q,q}
\ee
with normalization
\be
E_n^+(a^{\pm1})=1.
\ee
This is related to the usual notation and normalization \cite[(7.5.2)]{A-W} by
\be\label{K58}
E_n^+(z;a,b,c,d)=
\frac{a^n}{(ab,ac,ad;q)_n}\,p_n\big(\thalf(z+z^{-1});a,b,c,d\,|\,q\big).
\ee
The notation $E_n^+$ comes from \cite[\S10.6]{N-St}. In \cite{Koo-Ma} we
had $R_n$ instead of $E_n^+$.

By \eqref{K58} and \eqref{K71} $p_n$ has the
expansion
\be\label{K74}
p_n\big(\thalf(z+z^{-1});a,b,c,d\,|\,q\big)=a^{-n}
\sum_{k=0}^n (abq^k,acq^k,adq^k;q)_{n-k}\,
\frac{(q^{-n},q^{n-1}abcd,az,az^{-1};q)_k}{(q;q)_k}\,q^k.
\ee
Although not completely visible from \eqref{K74}, $p_n$ is symmetric in $a,b,c,d$
\cite[\S7.5]{A-W}.
The nontrivial symmetry in $a$ and $b$ follows from
Sears' transformation \cite[(III.15)]{A-W}. It
can be written for $E_n^+(z)$ as
\be \label{K77}
E_n^+(z;a,b,c,d)=\left(\frac ab\right)^n\,
\frac{(bc,bd;q)_n}{(ac,ad;q)_n}\,E_n^+(z;b,a,c,d).
\ee

We will also need the (in $z$) \emph{monic} AW polynomial
\be
P_n^+(z)=P_n^+(z;a,b,c,d):=\frac{(ab,ac,ad;q)_n}{(q^{n-1}abcd;q)_n\,a^n}\,
E_n^+(z;a,b,c,d)
=\frac{p_n\big(\thalf(z+z^{-1});a,b,c,d\,|\,q\big)}{(q^{n-1}abcd;q)_n}\,,
\label{K14}
\ee
which is also symmetric in $a,b,c,d$.
The notation $P_n^+$ comes from \cite[\S5.3]{N-St}. In \cite{Koo2007}
we had $P_n$ instead of $P_n^+$.

Under suitable constraints on the parameters the AW polynomials are
orthogonal polynomials with explicit orthogonality relations \cite{A-W}.
They are eigenfunctions of the Askey--Wilson operator (AW operator)
$L=L^{a,b,c,d}$ given  by \eqref{K72}:
\be
(LE_n^+)(z)=(q^{-n}+abcdq^{n-1})E_n^+(z).
\label{K7}
\ee
It is worth remembering that the operator $L^{a,b,c,d}$ is the restriction of the
operator $D(a,b,c,d)$ (in the basic representation)
to the symmetric functions in the representation space.

From \eqref{K71} we see the \emph{duality} for AW polynomials:
\be
E_n^+(a^{-1} q^{-m};a,b,c,d)=E_m^+(\td a^{-1} q^{-n};\td a,\td b,\td c,\td d),\quad
m,n\in\ZZ_{\ge0}.
\label{K137}
\ee

\subsection{Askey--Wilson functions}
\label{K120}
We will use the notation \cite[(2.1.11)]{Gas-R} for \emph{very-well-poised series},
and we will further abbreviate this notation in the case of
\emph{very-well-poised-balanced series} by
\begin{align}
&{}_8W_7(a;b,c,d,e,f)={}_8W_7\left(a;b,c,d,e,f;q,\frac{q^2 a^2}{bcdef}\right)
\nn\\
&:=\qhyp87{a,qa^{1/2},-qa^{1/2},b,c,d,e,f}
{a^{1/2},-a^{1/2},qa/b,qa/c,qa/d,qa/e,qa/f}{q,\frac{q^2 a^2}{bcdef}},
\qquad\left|\frac{q^2 a^2}{bcdef}\right|<1.\label{K2}
\end{align}
Note that this is symmetric in $b,c,d,e,f$.

Koelink and Stokman \cite[(3.2)]{Koe-St} define the \emph{Askey--Wilson function}
(or briefly \emph{AW function}) $\phi_\ga$ as follows.
\begin{align}
&\phi_\ga(z)=\phi_\ga(z;a,b,c,d)=\phi_\ga(z;a,b,c,d;q):=
\frac{(q\td a d^{-1}\ga z,q\td a d^{-1}\ga z^{-1};q)_\iy}
{(bc,q\ga\td a ad^{-1},\ga\td a^{-1}bc,qd^{-1}z,qd^{-1}z^{-1};q)_\iy}
\nn
\sLP
&\qquad\qquad\qquad\times
{}_8W_7(\td a ad^{-1}\ga;az,az^{-1},
\td a\ga,\td b\ga,\td c\ga)\quad
|\ga|>|q\td d^{-1}|\label{K3}.
\end{align}
They use it as a kernel in a Fourier type integral transform, called the
Askey--Wilson function transform, for which they give an inversion formula.

This function was earlier studied by Suslov
\cite[(6.18)]{Su1989}, \cite[(2.7)]{Su1997}, \cite[(2.7)]{Su2001} (notations and
normalizations vary). Another representation of $\phi_\ga(z)$ can be obtained
from \eqref{K3} by applying \cite[(III.23)]{Gas-R}
(there replace $a;b,c,d,e,f$ by
$\td a ad^{-1}\ga;\td b\ga,\td c\ga,az,az^{-1},\td a\ga$):
\begin{align}
&\phi_\ga(z;a,b,c,d)=
\frac{(qa \td d^{-1}\ga^{-1} z^{-1},qa\td d^{-1}\ga z^{-1};q)_\iy}
{(\td b\td c,q\td a a\td d^{-1}z^{-1},a^{-1}\td b\td cz^{-1},q\td d^{-1}\ga,
q\td d^{-1}\ga^{-1};q)_\iy}
\nn
\sLP
&\qquad\qquad\qquad\times
{}_8W_7(\td a a\td d^{-1}z^{-1};\td a\ga,\td a\ga^{-1},
az^{-1},{bz^{-1}},cz^{-1}),\quad
|z|<|q^{-1}d|\label{K78}.
\end{align}
The representation \eqref{K78} of $\phi_\ga$ earlier occurred in papers by
Ismail \& Rahman \cite[(1.12)]{Is-R} and by
Rahman \cite[(1.32), (1.35)]{R1992}, \cite[(1.19)]{R1999} (again with varying
notations and normalizations), see also Haine \& Iliev
\cite[(2.4), (2.8a)]{Ha-Il}. Suslov \cite[(2.7), (2.10)]{Su2001} gave both
representations \eqref{K3} and \eqref{K78}.

The combination of \eqref{K3} and \eqref{K78} states the \emph{duality}
\be
\phi_\ga(z;a,b,c,d)=\phi_{z^{-1}}(\ga^{-1};\td a,\td b,\td c,\td d).
\label{K42}
\ee
By \eqref{K3} and \eqref{K78} $\phi_\ga(z)$ is symmetric in $z$ and $\ga$:
\[
\phi_\ga(z)=\phi_\ga(z^{-1}),\qquad
\phi_\ga(z)=\phi_{\ga^{-1}}(z).
\]

Stokman \cite[(3.1)]{St2002} defines $\phi_\ga$ with a normalization different from
\eqref{K3};
we denote it here by $\phi_\ga^{\textup{S}}$:
\be
\phi_\ga^{\textup{S}}:=\frac{(qabcd^{-1};q)_\iy}{(qbd^{-1},qcd^{-1};q)_\iy}\,\phi_\ga.
\label{K57}
\ee
In \cite[Dfn.~6.12, Thm.~6.20]{St2003} Stokman uses yet another notation
and normalization:
\begin{align}
&\goE^+(\ga;z)=\goE^+(\ga;z;a,b,c,d):=
(bc,qad^{-1},qa^{-1}d^{-1};q)_\iy\,\phi_\ga(z;a,b,c,d)
\label{K53}\\
&=
\frac{(q\ga\td a d^{-1}z,q\ga\td a d^{-1}z^{-1},qad^{-1},qa^{-1}d^{-1};q)_\iy}
{(q\ga\td a ad^{-1},\ga\td a^{-1}bc,qd^{-1}z,qd^{-1}z^{-1};q)_\iy}
\,{}_8W_7(\ga\td a ad^{-1};az,az^{-1},
\ga\td a,\ga ab\td a^{-1},\ga ac\td a^{-1})\label{K54}
\end{align}
From \eqref{K54} we observe the normalization
\be
\goE^+(\ga;a^{\pm1};a,b,c,d)=1.
\label{K90}
\ee
Again we have symmetry in $z$ and $\ga$,
\begin{equation*}
\goE^+(\ga;z)=\goE^+(\ga;z^{-1}),\quad \goE^+(\ga;z)=\goE^+(\ga^{-1};z),
\end{equation*}
and duality
\be
\goE^+(\ga;z;a,b,c,d)=\goE^+(z,\ga;\td a,\td b,\td c,\td d).
\label{61}
\ee
Hence, by \eqref{K90} and \eqref{61},
\be
\goE^+(\td a^{\pm 1};z;a,b,c,d)=1.
\label{K119}
\ee

\begin{remark}
If we substitute $\ga=\td a$ in \eqref{K54}
 then, by \eqref{K119} and \eqref{K2},
 \be
 1=\frac{(abcz,abcz^{-1},qad^{-1},qa^{-1}d^{-1};q)_\iy}
{(a^2bc,bc,qd^{-1}z,qd^{-1}z^{-1};q)_\iy}
\,{}_6W_5(q^{-1}a^2bc;az,az^{-1},q^{-1}abcd;q,qa^{-1}d^{-1}).
\label{K125}
\ee
Formula \eqref{K125} is equivalent with the evaluation formula
\cite[(II.20)]{Gas-R}.
\end{remark}

By \cite[(III.36)]{Gas-R} we can write
\begin{align}
&\goE^+(\ga;z;a,b,c,d)\nn\\
&=\qhyp43{az,az^{-1},\td a\ga,\td a\ga^{-1}}{ab,ac,ad}{q,q}
+\frac{(az,az^{-1},\td a\ga,\td a\ga^{-1},
qbd^{-1},qcd^{-1},qa^{-1}d^{-1};q)_\iy}
{(qd^{-1}z,qd^{-1}z^{-1},q\ga\td a a^{-1}d^{-1},q\ga^{-1}\td a a^{-1}d^{-1},
ab,ac,q^{-1}ad;q)_\iy}\nn\\
&\qquad\qquad\qquad\qquad\qquad\qquad\qquad\qquad\times
\qhyp43
{qd^{-1}z,qd^{-1}z^{-1},q\td d^{-1}\ga,q\td d^{-1}\ga^{-1}}
{qbd^{-1},qcd^{-1},q^2 a^{-1}d^{-1}}{q,q}\label{K55}
\end{align}
Hence, as observed in \cite[\S3]{Koe-St}, $\goE^+(\ga;z;a,b,c,d)$ extends for generic
parameters $a,b,c,d$ to a meromorphic function in $z$ and $\ga$.
More precisely, using the definition \eqref{K164} of a Gaussian, we have:
\begin{prop}\label{K91}
The expression
$\displaystyle
\frac{\goE^+(\ga;z;a,b,c,d)}{G_{qd^{-1}}(z)G_{q\td d^{-1}}(\ga)}$
is an entire complex analytic function in $z+z^{-1}$ and $\ga+\ga^{-1}$.
Equivalently, $\goE^+(\ga;z;a,b,c,d)$ belongs to $\FSO^+G_{qd^{-1}}$ as a
function of $z$ and to $\FSO^+G_{q\td d^{-1}}$ as a function of $\ga$.
\end{prop}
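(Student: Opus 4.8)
The plan is to read off everything from the explicit expansion \eqref{K55}. Recall that by definition \eqref{K164} a Gaussian $G_e(z)=1/(ez,ez^{-1};q)_\iy$, so that a function $f(z)$ lies in $\FSO^+G_{qd^{-1}}$ precisely when $(qd^{-1}z,qd^{-1}z^{-1};q)_\iy f(z)$ is an entire function of $z+z^{-1}$; equivalently, $f$ is symmetric, analytic on $\CC^*$ except possibly for poles along $z\in qd^{-1}q^{\ZZ_{\ge0}}$ together with the $z\leftrightarrow z^{-1}$ images, and these poles are at worst simple and come only from the prefactor. First I would treat the $z$-dependence with $\ga$ fixed generic: in \eqref{K55} the first ${}_4\phi_3$ is a power series in $q$ whose $k$-th term contains $(az,az^{-1};q)_k$ in the numerator, hence is a symmetric Laurent polynomial in $z$ of bounded degree, and the series converges (base $q$, argument $q$) to an entire function of $z+z^{-1}$ — so this term already lies in $\FSO^+\subset\FSO^+G_{qd^{-1}}$.

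The second term in \eqref{K55} is the delicate one. I would write it as a product of three factors: (i) the $q$-shifted-factorial prefactor, (ii) the constant (in $z$) part of that prefactor, and (iii) the second ${}_4\phi_3$. For (iii): its $k$-th term has numerator containing $(qd^{-1}z,qd^{-1}z^{-1};q)_k$, so it is again a symmetric Laurent polynomial in $z$, and the series sums to an entire function of $z+z^{-1}$. For (i)–(ii): the only $z$-dependent factors are $(az,az^{-1};q)_\iy$ in the numerator, which is entire in $z+z^{-1}$, and $(qd^{-1}z,qd^{-1}z^{-1};q)_\iy$ in the denominator, which is exactly $G_{qd^{-1}}(z)^{-1}$. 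Multiplying the three factors: the numerator contributes $(az,az^{-1};q)_\iy\cdot(\text{entire symmetric})$, entire in $z+z^{-1}$, and the denominator contributes precisely $(qd^{-1}z,qd^{-1}z^{-1};q)_\iy$. Hence the whole second term, divided by $G_{qd^{-1}}(z)$, is entire in $z+z^{-1}$; that is, the second term lies in $\FSO^+G_{qd^{-1}}$. Since $\FSO^+\subset\FSO^+G_{qd^{-1}}$ (as $G_{qd^{-1}}^{-1}$ is entire), the sum $\goE^+(\ga;z;a,b,c,d)$ lies in $\FSO^+G_{qd^{-1}}$ as a function of $z$, for generic $a,b,c,d$ and generic fixed $\ga$.

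For the joint statement I would keep track of the $\ga$-dependence in parallel. In the first ${}_4\phi_3$ the $k$-th term also contains $(\td a\ga,\td a\ga^{-1};q)_k$, symmetric and polynomial in $\ga+\ga^{-1}$, so this term is entire in both $z+z^{-1}$ and $\ga+\ga^{-1}$. In the second term: the $\ga$-dependent numerator factors are $(q\td a d^{-1}\ga z,q\td a d^{-1}\ga z^{-1};q)_\iy$ from \eqref{K54} — wait, here I should instead start directly from the symmetric form in \eqref{K54}, or, more cleanly, use \eqref{K55} together with the $z\leftrightarrow\ga$ duality \eqref{61}. The cleanest route is: by the $z$-analysis just done, $\goE^+(\ga;z)/G_{qd^{-1}}(z)$ is entire in $z+z^{-1}$ for generic fixed $\ga$; applying duality \eqref{61}, $\goE^+(\ga;z;a,b,c,d)=\goE^+(z;\ga;\td a,\td b,\td c,\td d)$, and the same $z$-analysis applied to the dual parameters (legitimate since by Lemma \ref{K143}(iii)-(iv) the dual parameters are again generic and their dual is $(a,b,c,d)$) shows $\goE^+(\ga;z)/G_{q\td d^{-1}}(\ga)$ is entire in $\ga+\ga^{-1}$ for generic fixed $z$. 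To upgrade to joint entireness of $\goE^+(\ga;z)/(G_{qd^{-1}}(z)G_{q\td d^{-1}}(\ga))$ I would invoke \eqref{K55} once more: each term is manifestly a convergent (locally uniformly in the two spectral variables) series of products of the form (entire in $z+z^{-1}$)$\times$(entire in $\ga+\ga^{-1}$)$\times$($G_{qd^{-1}}(z)G_{q\td d^{-1}}(\ga)$ times entire), where one must check that the prefactor of the second term, after dividing by $G_{qd^{-1}}(z)G_{q\td d^{-1}}(\ga)$, has no remaining poles in $z$ or $\ga$ — the potentially dangerous denominator factors $(q\ga\td a a^{-1}d^{-1},q\ga^{-1}\td a a^{-1}d^{-1};q)_\iy$ are $\ga$-dependent but, for generic parameters, their zero sets are disjoint from the numerator's zeros and, crucially, they are cancelled against corresponding infinite products hidden in the definition \eqref{K54}; tracking this cancellation explicitly via \eqref{K54} and \eqref{K55} is the one place that needs care. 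Then a standard two-variable Hartogs/separate-analyticity argument (or simply that a locally uniform limit of entire functions of $(z+z^{-1},\ga+\ga^{-1})$ is entire) gives joint entireness.

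The main obstacle is step (i)–(ii) in the $\ga$-variable: making sure that after dividing the prefactor of the second term in \eqref{K55} by $G_{qd^{-1}}(z)G_{q\td d^{-1}}(\ga)$ no spurious poles survive — concretely, that the factors $(q\ga\td a a^{-1}d^{-1},q\ga^{-1}\td a a^{-1}d^{-1};q)_\iy$ in the denominator are genuinely absorbed (up to entire symmetric factors) and do not force additional Gaussian factors in $\ga$ beyond $G_{q\td d^{-1}}$. I expect this to come out cleanly because the statement is symmetric under $z\leftrightarrow\ga$, $(a,b,c,d)\leftrightarrow(\td a,\td b,\td c,\td d)$ by \eqref{61}, so the $\ga$-side analysis is forced to mirror the $z$-side analysis; but verifying it requires carefully matching the infinite products in \eqref{K54} against those in \eqref{K55}. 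Everything else is routine: convergence of ${}_4\phi_3(\cdots;q,q)$ type series to entire functions, and the observation that each summand is a symmetric Laurent polynomial in the relevant variable of controlled degree.
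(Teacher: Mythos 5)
Your approach is the same as the paper's: Proposition \ref{K91} is read off directly from the expansion \eqref{K55}, and your treatment of the first ${}_4\phi_3$ and of the $z$-dependence of the second term is correct. The one step you flag as needing care — whether the $\ga$-dependent denominator factor $(q\ga\td a a^{-1}d^{-1},q\ga^{-1}\td a a^{-1}d^{-1};q)_\iy$ in the prefactor of the second term leaves spurious poles — closes in one line: by \eqref{79} one has $\td a\td d=ad$, hence $q\td d^{-1}=q\td a\,a^{-1}d^{-1}$, so this factor is exactly $G_{q\td d^{-1}}(\ga)^{-1}$, the very Gaussian the proposition divides out. There is no hidden cancellation against infinite products in \eqref{K54} to track; the second term of \eqref{K55}, divided by $G_{qd^{-1}}(z)G_{q\td d^{-1}}(\ga)$, is simply $\const\times(az,az^{-1};q)_\iy(\td a\ga,\td a\ga^{-1};q)_\iy$ times the second ${}_4\phi_3$, each factor of which is (jointly) entire in $z+z^{-1}$ and $\ga+\ga^{-1}$ because every term of the series is a product of a symmetric Laurent polynomial in $z$ and one in $\ga$, with locally uniform convergence on $\CC^*\times\CC^*$. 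Consequently the detour through the duality \eqref{61} and a separate-analyticity/Hartogs argument is unnecessary — joint entireness is manifest term by term. (The paper also notes the alternative of deducing the proposition from the kernel expansion \eqref{K110}, where each summand is a product of AW polynomials times the two Gaussians, but that is not needed here.)
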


For $\ga=q^n\td a$, $n=0,1,2,\ldots$, we get from \eqref{K55}
an AW polynomial \eqref{K71}:
\be\label{K13}
\goE^+(q^n\td a;z;a,b,c,d)
=\qhyp43{q^{-n},q^{n-1}abcd,az,az^{-1}}{ab,ac,ad}{q,q}
=E_n^+(z;a,b,c,d).
\ee
Hence the eigenvalue equation \eqref{K7} for AW polynomials
can be rewritten as
\be
\td a^{-1}L_z\,\goE^+(q^n\td a;z)=
\big(q^n \td a+(q^n \td a)^{-1}\big)
\goE^+(q^n\td a;z),\qquad n\in\ZZ_{\ge0}\,.
\label{K19}
\ee
As stated in \cite[\S3]{Koe-St},
this extends to an eigenvalue equation for $\goE^+(\ga;z)$:
\be
\td a^{-1}L_z\,\goE^+(\ga;z)=(\ga+\ga^{-1})\,\goE^+(\ga;z).
\label{K18}
\ee
Here $L$ is still given by \eqref{K72}. Note that the first term on the right in
\eqref{K72} is missing in \cite[(2.4)]{Koe-St}, so that it occurs there in the
eigenvalue \cite[(2.5)]{Koe-St}.

\begin{remark}
The eigenvalue equation \eqref{K18} was probably first given by Suslov
\cite[\S6.3]{Su1989}. He derived it by observing that both terms
on the right-hand side of \eqref{K55} are also eigenfunctions of $\td a^{-1}L_z$
with eigenvalue $\ga+\ga^{-1}$. Indeed, put
\be
\goR(\ga;z;a,b,c,d):=\qhyp43{az,az^{-1},\td a\ga,\td a\ga^{-1}}{ab,ac,ad}{q,q},
\label{K81}
\ee
so $\goR(q^\nu\td a;z;a,b,c,d)=E_\nu^+(z;a,b,c,d)$ with $E_\nu^+$ given by
\eqref{K71} with $n$ replaced by $\nu$.
Now \eqref{K55} can be rewritten as
\begin{align}
&\goE^+(\ga;z;a,b,c,d)=\goR(\ga;z;a,b,c,d)\nn\\
&+\frac{(\td a\ga,\td a\ga^{-1},
qbd^{-1},qcd^{-1},qa^{-1}d^{-1};q)_\iy}
{(q\ga\td a a^{-1}d^{-1},q\ga^{-1}\td a a^{-1}d^{-1},
ab,ac,q^{-1}ad;q)_\iy}\,\frac{(az,az^{-1};q)_\iy}{(qd^{-1}z,qd^{-1}z^{-1};q)_\iy}\,
\goR(\ga;z;qd^{-1},b,c,qa^{-1}).
\label{K80}
\end{align}
The eigenvalue equation for $\goR(\ga;z;a,b,c,d)$ is proved in the same way as
in the polynomial case. For the second term on the right in \eqref{K80}
Suslov \cite[\S6.3, item 2]{Su1989} observes that, if $f(z;a,b,c,d)$ is an
eigenfunction of $L_z^{a,b,c,d}$ with eigenvalue $\lambda$ then so is
$\frac{(az,az^{-1};q)_\iy}{(qd^{-1}z,qd^{-1}z^{-1};q)_\iy}\,
f(z;qd^{-1},b,c,qa^{-1})$. We will come back to this symmetry of $L_z$ in
Proposition \ref{K75}.
\end{remark}

In \cite[(4.4), (4.7)]{St2002} Stokman derived a Cherednik type kernel
for the
AW function. This is a formula expanding $\goE^+(\ga;z;a,b,c,d)$
in terms of products of two AW polynomials, one depending on $z$
and one on $\ga$. By \eqref{K57}, \eqref{K53} and \eqref{K14} we can rewrite his
formula as follows.

\begin{prop}\label{K185}
The AW function can be given by the following expansion formula.
\begin{align}
&\goE^+(\ga;z;a,b,c,d)=\frac{(bc,qad^{-1},qbd^{-1},qcd^{-1},qa^{-1}d^{-1};q)_\iy}
{(qabcd^{-1};q)_\iy
(qd^{-1}z,qd^{-1}z^{-1},q\td d^{-1}\ga,q\td d^{-1}\ga^{-1};q)_\iy}\nn\\
&\qquad\times\sum_{m=0}^\iy (-1)^m (ad)^{-m} q^{\half m(m+1)}\,
\frac{1-q^{2m}abcd^{-1}}{1-abcd^{-1}}\,
\frac{(ab,ac,abcd^{-1};q)_m}
{(qbd^{-1},qcd^{-1},q;q)_m}\nn\\
&\qquad\qquad\qquad\qquad\qquad\times E_m^+(z;a,b,c,qd^{-1})\,
E_m^+(\ga;\td a,\td b,\td c,q\td d^{-1})
\label{K110}
\end{align}
The infinite sum converges absolutely and uniformly for
$(\ga,z)$ in compacta of $\CC^*\times\CC^*$.
\end{prop}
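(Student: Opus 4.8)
The plan is to deduce \eqref{K110} from the Cherednik-type kernel of Stokman \cite[(4.4),(4.7)]{St2002} by passing to the normalizations used here, and then to settle the convergence claim by a direct estimate (or by quoting the convergence already proved in \cite{St2002}). Concretely, I would first record Stokman's formula in his own conventions: it writes his $\phi_\ga^{\textup{S}}$, with parameters $t_1,u_1,u_0,t_0$ corresponding to our $k_1,u_1,u_0,k_0$ and hence to $(a,b,c,d)$ via \eqref{eq:SO1}, as the product of $G_{qd^{-1}}(z)$, of $G_{q\td d^{-1}}(\ga)$, and of an absolutely convergent sum over $m\ge 0$ of a coefficient times an AW polynomial in $z$ with parameters $(a,b,c,qd^{-1})$ times an AW polynomial in $\ga$ with parameters $(\td a,\td b,\td c,q\td d^{-1})$ --- the shift $d\mapsto qd^{-1}$ (resp.\ $\td d\mapsto q\td d^{-1}$) being exactly what produces the Gaussian prefactors, in accordance with Proposition \ref{K91}. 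Composing \eqref{K57} and \eqref{K53} then turns $\phi_\ga^{\textup{S}}$ into $\goE^+$ and simultaneously produces the constant prefactor $(bc,qad^{-1},qbd^{-1},qcd^{-1},qa^{-1}d^{-1};q)_\iy/(qabcd^{-1};q)_\iy$ of \eqref{K110}, since that factor is precisely the ratio $\goE^+/\phi_\ga^{\textup{S}}$.

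Second, I would convert the normalizations of the two AW polynomials in Stokman's expansion to the normalization $E_m^+$ used here, via the dictionary \eqref{K58} (equivalently \eqref{K14}) relating $p_m$, $P_m^+$ and $E_m^+$, and, should his polynomials appear with the parameters in a different order, rearranging them by Sears' transformation in the form \eqref{K77}; the parameter identification itself is read off from \eqref{eq:SO1} and the duality map of \S\ref{se:duality}. After that only a mechanical simplification of a product of $q$-shifted factorials remains, and it must collapse to $(-1)^m(ad)^{-m}q^{m(m+1)/2}\,\frac{1-q^{2m}abcd^{-1}}{1-abcd^{-1}}\,\frac{(ab,ac,abcd^{-1};q)_m}{(qbd^{-1},qcd^{-1},q;q)_m}$; here the factor $\frac{1-q^{2m}abcd^{-1}}{1-abcd^{-1}}$ is the characteristic very-well-poised factor $\tfrac{1-\alpha q^{2m}}{1-\alpha}$ with $\alpha=abcd^{-1}$, and the sign and the Gaussian power $q^{m(m+1)/2}$ come from the reflection identities for $(q^{-m}\cdot\,;q)_k$ implicit in the terminating ${}_4\phi_3$'s.

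For convergence I would argue directly (or cite the estimate in \cite{St2002}). On a compact $K\subset\CC^*$ the three-term recurrence of the AW polynomials \cite{A-W} has coefficients that stay bounded with the degree, so $|E_m^+(z;a,b,c,qd^{-1})|\le C_K^{\,m}$ uniformly for $z\in K$, and likewise for the $\ga$-polynomial; meanwhile the coefficient of \eqref{K110} is bounded in modulus by $q^{m(m+1)/2}$ times $|ad|^{-m}$ times factors that remain bounded as $m\to\iy$ (the $q$-shifted factorials in the numerator converge, those in the denominator converge to nonzero limits for generic parameters, and $\frac{1-q^{2m}abcd^{-1}}{1-abcd^{-1}}$ is bounded). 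Hence the general term is $O(q^{m^2/2}C^m)$ uniformly on compacta of $\CC^*\times\CC^*$, which gives absolute and uniform convergence there. The only genuinely delicate point is the bookkeeping of the first two paragraphs --- aligning Stokman's parameter names and his two normalizations ($\phi_\ga^{\textup{S}}$ and his AW polynomials) with ours and getting the $d\mapsto qd^{-1}$, $\td d\mapsto q\td d^{-1}$ shifts to land consistently; everything after that is routine. (As an alternative to importing Stokman's formula, one could expand $\goE^+(\ga;z;a,b,c,d)$ --- which lies in $\FSO^+G_{qd^{-1}}$ as a function of $z$ by Proposition \ref{K91} --- in the system $G_{qd^{-1}}(z)E_m^+(z;a,b,c,qd^{-1})$ and pin down the coefficient by specialising $\ga=q^n\td a$ through \eqref{K13} and \eqref{61}; this route needs the relevant biorthogonality, for which \eqref{K0} and Suslov's symmetry recorded after \eqref{K80} are the natural inputs.)
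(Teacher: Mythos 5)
Your main route is exactly the paper's: Proposition \ref{K185} is obtained by taking Stokman's Cherednik-type kernel \cite[(4.4), (4.7)]{St2002} and converting his normalizations to the present ones via \eqref{K57}, \eqref{K53} and \eqref{K14} (the paper states precisely this, inheriting the convergence from Stokman as well, and your extra $O(q^{m^2/2}C^m)$ estimate for the tail is a sound way to justify it independently). So the proposal is correct and essentially coincides with the paper's proof.
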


As observed in \cite{St2003}, Proposition \ref{K91} and the duality
\eqref{61} immediately follow from the expansion formula.

\begin{remark}
The normalization formula \eqref{K119} in combination with \eqref{K110} and
\eqref{K58} gives rise to the following expansion formula
\cite[Remark 4.8]{St2002} for the inverse Gaussian.
\begin{align}
&\frac1{G_d(z)}=(dz,dz^{-1};q)_\iy
=\frac{(ad,bd,cd;q)_\iy}{(abcd;q)_\iy}\nn\\
&\quad\times
\sum_{m=0}^\iy (-a^{-1}d)^m q^{\half m(m-1)}\,
\frac{1-q^{2m-1}abcd}{1-q^{-1}abcd}\,\frac{(q^{-1}abcd,ab,ac;q)_m}{(bd,cd,q;q)_m}
\,E_m^+(z;a,b,c,d).\label{K184}
\end{align}
By \cite[Remark 4.5]{St2002} this is a special case of a dual connection formula
for AW polynomials
and continuous dual Hahn polynomials which is essentially contained in
\cite[(7.6.8), (7.6.9)]{Gas-R}.
Furthermore, as observed in \cite[Remark 4.8]{St2002}, formula \eqref{K184}
reduces for $(a,b,c,d)=(1,-1,-q^{1/2},q^{1/2})$ to Jacobi's triple product
formula \cite[(1.6.1)]{Gas-R}, since
$E_m^+(z;1,-1,-q^{1/2},q^{1/2})=\thalf(z^m+z^{-m})$ ($m\ne0$) and $=1$ ($m=0$).
\end{remark}

\section{Action of DAHA automorphisms on Askey--Wilson polynomials and functions}
As observed in \S\ref{sec:D4Zh}, the AW operator $L^{a,b,c,d}$ is invariant
under permutations of $a,b,c,d$, while it satisfies the symmetry \eqref{K0} under
the action of $t_4\colon(a,b,c,d)\mapsto(a,b,qd^{-1},qc^{-1})$. These symmetries
generate a group isomorphic to $W(D_4)$. In this section we study the possible
action of these symmetries on the eigenfunctions of $L$. First we consider this
for the AW polynomials, but there no satisfactory action of $t_4$ can be found.
Then we consider it for the AW functions, where $t_4$ acts nicely but $t_3$
(interchanging $c$ and $d$) fails.

\subsection{An attempted action of $t_4$ on Askey--Wilson polynomials}
Let us try to apply Proposition \ref{K75} to AW polynomials.
By \eqref{K7} we see that the eigenfunction $E_n^+(z;a,b,c,d)$ of
$L_z^{a,b,c,d}$ has eigenvalue
$q^{-n}+abcdq^{n-1}$ and that
the eigenfunction $E_m^+(z;a,b,qd^{-1},qc^{-1})$ of
$L_z^{a,b,qd^{-1},qc^{-1}}$ has eigenvalue
$q^{-m}+abc^{-1}d^{-1}q^{m+1}$. If $cd=q^{m-n+1}$ then the second eigenvalue is
equal to $qc^{-1}d^{-1}$ times the first eigenvalue. Conversely, if
$q^{-m}+abc^{-1}d^{-1}q^{m+1}=qc^{-1}d^{-1}(q^{-n}+abcdq^{n-1})$ for all
$m,n\in\ZZ_{\ge0}$ with $n-m$ fixed then $cd=q^{m-n+1}$. This shows,
in view of Proposition \ref{K75}, that $E_n^+(z;a,b,c,d)$ and
$\frac{G_c(z)}{G_{qd^{-1}}(z)}E_m^+(z;a,b,qd^{-1},qc^{-1})$ are
proportional if and only if $cd=q^{m-n+1}$.
Without loss of generality we assume that $n\ge m$. It is also convenient
to work with the normalization and expansion \eqref{K74} because of the
symmetry in $a,b,c,d$. Then we find
\begin{multline} \label{K76}
p_n\big(\thalf(z+z^{-1});a,b,c,d\,|\,q\big)
=(-c)^{-(n-m)} q^{-\half(n-m)(n-m-1)}(q^m ab;q)_{n-m}\\
\times(cz,cz^{-1};q)_{n-m}\,
p_m\big(\thalf(z+z^{-1});a,b,qd^{-1},qc^{-1}\,|\,q\big),\quad
cd=q^{m-n+1}.
\end{multline}

Note that for $cd=q^{m-n+1}$ indeed
$\frac{G_c(z)}{G_{qd^{-1}}(z)}=(cz,cz^{-1};q)_{n-m}$.
\paragraph{Proof of \eqref{K76}.}
From \eqref{K74}, the symmetry in $a,b,c,d$,
the condition $cd=q^{m-n+1}$ and the vanishing for $k<n-m$ of
$(q^{m-n+k+1};q)_{n-k}$ we see that
\begin{align*}
&p_n\big(\thalf(z+z^{-1});c,a,b,d\,|\,q\big)
=c^{-n}\sum_{k=n-m}^n (acq^k,bcq^k,q^{m-n+k+1};q)_{n-k}\,
\frac{(q^{-n},q^m ab,cz,cz^{-1};q)_k}{(q;q)_k}\,q^k\\
&=c^{-n}\sum_{j=0}^m (acq^{n-m+j},bcq^{n-m+j},q^{j+1};q)_{m-j}\,
\frac{(q^{-n},q^m ab,cz,cz^{-1};q)_{n-m+j}}{(q;q)_{n-m+j}}\,q^{n-m+j}\\
&=(-c)^{-(n-m)} q^{-\half(n-m)(n-m-1)}(q^m ab;q)_{n-m}(cz,cz^{-1};q)_{n-m}
\,q^{-m(n-m)}c^{-m}\\
&\qquad\qquad\times \sum_{j=0}^m (acq^{n-m+j},bcq^{n-m+j},q^{n-m+j+1};q)_{m-j}\,
\frac{(q^{-m},q^n ab,q^{n-m}cz,q^{n-m}cz^{-1};q)_j}{(q;q)_j}\,q^j\\
&=(-c)^{-(n-m)} q^{-\half(n-m)(n-m-1)}(q^m ab;q)_{n-m}(cz,cz^{-1};q)_{n-m}
\,(qd^{-1})^{-m}\\
&\qquad\quad\times
\sum_{j=0}^m (ad^{-1}q^{j+1},bd^{-1}q^{j+1},c^{-1}d^{-1}q^{j+2};q)_{m-j}\,
\frac{(q^{-m},q^{m+1}abc^{-1}d^{-1},qd^{-1}z,qd^{-1}z^{-1};q)_j}{(q;q)_j}\,q^j\\
&=(-c)^{-(n-m)} q^{-\half(n-m)(n-m-1)}(q^m ab;q)_{n-m}(cz,cz^{-1};q)_{n-m}\\
&\qquad\qquad\qquad\qquad\qquad\qquad\qquad\qquad\times
p_m\big(\thalf(z+z^{-1});qd^{-1},a,b,qc^{-1}\,|\,q\big).\qed
\end{align*}

As we just saw, if we want to realize the action of $t_4$ on AW
polynomials, then we have to assume that $cd$ is an integer power of $q$.
With a continuous spectrum of $L$, as for AW functions
as eigenfunctions, the condition on $cd$ can be dropped, as we will see
in the next subsection.

\subsection{Action of $t_4$ and $t_2$ on Askey--Wilson functions}
We will work with the AW function $\goE^+(\ga;z;a,b,c,d)$ as
defined by \eqref{K54}, and also given as a symmetric rank one
Cherednik kernel,
see Proposition \ref{K185}.
Note that this function is symmetric in $b$ and $c$, as can be read off
from any of its expressions \eqref{K54}, \eqref{K55} or \eqref{K110}
(respectively using an obvious symmetry of ${}_8W_7$ or ${}_4\phi_3$ or
$E_n^+$).

The following theorem gives the action of $t_4$ and $t_2$ 
on the AW function.

\begin{theorem}\label{th-sym-t4}
We have
\begin{align}
\goE^+(\ga;z;a,b,c,d)&=\frac{(qad^{-1},qa^{-1}d^{-1};q)_\iy}{(ac,a^{-1}c;q)_\iy}\,
\frac{G_{qd^{-1}}(z)}{G_c(z)}\,\goE^+(\ga;z;a,b,qd^{-1},qc^{-1}),\label{K25}\\
\goE^+(\ga;z;a,b,c,d)&
=\frac{(bc,qa^{-1}d^{-1};q)_\iy}{(ac,qb^{-1}d^{-1};q)_\iy}\,
\frac{G_{q\td d^{-1}}(\ga)}{G_{\td c}(\ga)}\,
\goE^+(\ga;z;b,a,c,d).
\label{K56}
\end{align}
\end{theorem}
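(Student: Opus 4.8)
The plan is to prove \eqref{K25} directly from the symmetric rank one Cherednik kernel expansion of Proposition \ref{K185}, and then to deduce \eqref{K56} from \eqref{K25} by the self-duality \eqref{61} of the AW function. Note that \eqref{K25} makes precise the proportionality which, by Remark \ref{K186}, does \emph{not} follow automatically from the conjugation relation \eqref{K0} and Proposition \ref{K75}; checking that both sides solve the same eigenvalue equation (via \eqref{K18}, \eqref{K0} and the dual-parameter identity $q(cd)^{-1}\td a=\td b$) is a sanity test, not a proof.

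To prove \eqref{K25} I would write out \eqref{K110} for the two parameter tuples $(a,b,c,d)$ and $t_4(a,b,c,d)=(a,b,qd^{-1},qc^{-1})$ and compare the two series term by term. Every ingredient of \eqref{K110} transforms in a controlled way: the dual parameters of $(a,b,qd^{-1},qc^{-1})$ are $(\td b,\td a,\td c,\td d)$ (this is the identity recalled in \S\ref{se:duality} that $t_4$ on $(a,b,c,d)$ acts as $t_2$ on $(\td a,\td b,\td c,\td d)$); hence the $z$-Gaussian $G_{qd^{-1}}(z)$ is replaced by $G_c(z)$ while the $\ga$-Gaussian $G_{q\td d^{-1}}(\ga)$ is unchanged; the $z$-polynomial $E_m^+(z;a,b,c,qd^{-1})$ becomes $E_m^+(z;a,b,qd^{-1},c)$, which equals $E_m^+(z;a,b,c,qd^{-1})$ by symmetry of AW polynomials in their last three parameters; and the $\ga$-polynomial $E_m^+(\ga;\td a,\td b,\td c,q\td d^{-1})$ becomes $E_m^+(\ga;\td b,\td a,\td c,q\td d^{-1})$, which by Sears' symmetry \eqref{K77} equals $(\td b/\td a)^m\,(\td a\td c,q\td a\td d^{-1};q)_m/(\td b\td c,q\td b\td d^{-1};q)_m$ times $E_m^+(\ga;\td a,\td b,\td c,q\td d^{-1})$. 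A short computation with \eqref{79}--\eqref{K105} shows this Sears factor is exactly the quotient of the two $m$-dependent scalar coefficients of \eqref{K110} for $(a,b,c,d)$ and for $t_4(a,b,c,d)$; thus, once both sides are expanded in the single bilinear system $\{\,G_{qd^{-1}}(z)G_{q\td d^{-1}}(\ga)\,E_m^+(z;a,b,c,qd^{-1})\,E_m^+(\ga;\td a,\td b,\td c,q\td d^{-1})\,\}_{m\ge0}$, the two series have identical coefficients. Therefore \eqref{K25} reduces to the identity between the $m$-independent prefactors — a ratio of infinite $q$-shifted factorials times a Gaussian ratio — which collapses, again by \eqref{79}--\eqref{K105}, to the stated constant; one may also evaluate at $z=a^{\pm1}$ using \eqref{K90} as a further check.

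For \eqref{K56} I would apply \eqref{61} before and after \eqref{K25}. Writing \eqref{K25} for the tuple $(\td a,\td b,\td c,\td d)$ and relabelling the two variables yields a relation between $\goE^+(z;\ga;\td a,\td b,\td c,\td d)$ and $\goE^+(z;\ga;\td a,\td b,q\td d^{-1},q\td c^{-1})$ with the Gaussians now in the $\ga$-variable. By \eqref{61} the left-hand side is $\goE^+(\ga;z;a,b,c,d)$, and since $(\td a,\td b,q\td d^{-1},q\td c^{-1})$ is precisely the dual-parameter tuple of $(b,a,c,d)$ — equivalently, $t_4$ applied to $(\td a,\td b,\td c,\td d)$ corresponds to $t_2$ applied to $(a,b,c,d)$ — the right-hand side becomes $\goE^+(\ga;z;b,a,c,d)$. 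The constant $(q\td a\td d^{-1},q\td a^{-1}\td d^{-1};q)_\iy/(\td a\td c,\td a^{-1}\td c;q)_\iy$ and the Gaussian factor then simplify by \eqref{79}--\eqref{K105} to those in \eqref{K56}. Alternatively \eqref{K56} admits a direct proof running exactly like that of \eqref{K25}, except that now Sears' relation \eqref{K77} is applied to the $z$-polynomial $E_m^+(z;b,a,c,qd^{-1})$ while the $\ga$-polynomial transforms only by a permutation of its parameters.

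The main work is the bookkeeping in the term-by-term comparison: for each of the $z$- and $\ga$-factors one must identify which AW-polynomial symmetry is needed (the trivial permutation of the last three parameters versus Sears' relation \eqref{K77}), and then verify that the accumulated Sears factor cancels against the elementary ratio of $m$-dependent coefficients in \eqref{K110}, leaving only the $m$-independent prefactors to match. A minor point is that \eqref{K110} holds under Assumption \ref{K213} (and, for some parameter products, the extra constraints of Lemma \ref{K143}); one therefore first establishes \eqref{K25}--\eqref{K56} for parameters in such a common domain, using Lemma \ref{K143} to control the dual parameters, and then extends the identities to generic parameters by analytic continuation.
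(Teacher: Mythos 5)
Your proposal is correct and coincides with the paper's ``Second Proof'': both derive \eqref{K25} and \eqref{K56} from the expansion \eqref{K110}, using the symmetry of $E_m^+$ in its last three parameters together with Sears' relation \eqref{K77} to show the series is symmetric in $a,b,c,qd^{-1}$, the Sears factor cancelling against the ratio of the $m$-dependent coefficients. Your duality route from \eqref{K25} to \eqref{K56} is likewise the equivalence recorded in Remark \ref{K94}(3); the paper's only additional content is an independent ``First Proof'' via the ${}_8W_7$ representation \eqref{K54} and \cite[(III.23)]{Gas-R}, which you do not need.
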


\paragraph{First Proof}
Formula \eqref{K25} is immediately seen from \eqref{K54}
by the symmetry in $b,c,d,e$ of the ${}_8W_7$ in \eqref{K2}.
Formula \eqref{K56} can be obtained from \cite[(III.23)]{Gas-R} with
the substitutions
\[
a\to \ga\td a a d^{-1},\quad
b\to az,\quad
c\to az^{-1},\quad
d\to\ga \td a^{-1} ac,\quad
e\to\ga\td a,\quad
f\to\ga\td a^{-1} ab,
\]
together with \eqref{K54}.\qed
\paragraph{Second Proof}
Formula \eqref{K110} shows
that the infinite sum (without the factors in front of the sum)
is symmetric in $a,b,c,qd^{-1}$ because
the AW polynomials $R_m^+(z;a,b,c,d)$ are symmetric in $b,c,d$ and
satisfy the symmetry \eqref{K77} in $a$ and $b$.
Then the symmetry in $c,qd^{-1}$ yields \eqref{K25} and the symmetry in $a,b$
yields \eqref{K56}.\qed
\begin{remark}\label{K94}\quad
\begin{enumerate}
\item
Formula \eqref{K25} is in agreement with Remark \ref{K186}.
Indeed, 
$\goE^+(\ga;z;a,b,c,d)$ is an eigenfunction of  $L^{a,b,c,d}$
with eigenvalue $(\ga+\ga^{-1})\sqrt{q^{-1}abcd}$, while
$\goE^+(\ga;z;a,b,qd^{-1},qc^{-1})$
is an eigenfunction of $L_z^{a,b,qd^{-1},qc^{-1}}$ with eigenvalue
$(\ga+\ga^{-1})\sqrt{qabc^{-1}d^{-1}}$.
\item
For $\ga=q^n\td a$ \eqref{K56} combined with \eqref{K13} simplifies to the
well-known \eqref{K77}.
\item
Formulas \eqref{K25} and \eqref{K56} are equivalent by the duality \eqref{61}.
Indeed, use \eqref{61} in the left-hand side of \eqref{K25}, and also in the
right-hand side in the form
\begin{align}
&\goE^+(\ga;z;a,b,qd^{-1},qc^{-1})
=\goE^+\left(z;\ga;\sqrt{\frac{qab}{cd}},ab\sqrt{\frac{cd}{qab}},
qad^{-1}\sqrt{\frac{cd}{qab}},qac^{-1}\sqrt{\frac{cd}{qab}}\;\right)\nn\\
&=\goE^+\left(z;\ga;\frac{ab}{\sqrt{q^{-1}abcd}},\sqrt{q^{-1}abcd},
\frac{ac}{\sqrt{q^{-1}abcd}},\frac{ad}{\sqrt{q^{-1}abcd}}\right)
=\goE^+(z,\ga;\td b,\td a,\td c,\td d).
\label{K147}
\end{align}
Thus \eqref{K56} can be obtained by combining \eqref{K25} with \eqref{61} and
\eqref{K147}.
\item
Altogether we have seen that the AW functions have symmetries
which come from the subgroup of $W(D_4)$ consisting of
permutations of $a,b,c,qd^{-1}$.
\end{enumerate}
\end{remark}

\section{Non-symmetric Askey--Wilson functions and their symmetries}

\subsection{Non-symmetric Askey--Wilson polynomials}
\label{K194}
By \cite[\S3]{N-St}, see also \cite[\S1]{Sa2000} and
\cite[(4.4)--(4.7)]{Koo2007},
\emph{non-symmetric Askey--Wilson polynomials}
$P_n(z;a,b,c,d)=P_n(z)$ ($n\in\ZZ$) are Laurent polynomial eigenfunctions
of $Y$ as follows.
\begin{align}
YP_{-n}&=q^{-n} P_{-n},\qquad\;\; n\in\ZZ_{>0},
\label{K193}\\
YP_n&=q^{n-1}abcd P_n,\quad n\in\ZZ_{\ge0},
\label{K195}
\end{align}
such that
\be
P_{-n}(z)-z^{-n}\in\Span\{z^{-n+1},\ldots,z^{n-1}\},\quad
P_n(z)-z^n\in\Span\{z^{-n},\ldots,z^{n-1}\}.
\ee
This characterizes them uniquely.

Let $\la_n:=q^{-n}+q^{n-1}abcd$ (the eigenvalue of the AW operator
for $P_n^+(z)$).
By \cite[\S\S 5.2, 5.3]{N-St}, see also Proposition 3.1 and (3.16), (3.17),
(3.20), (3.21) in \cite{Koo2007}, there is an alternative characterization
of the (symmetric) AW polynomials $P_n^+(z)$ ($n\in\ZZ_{\ge0}$) and
an analogous definition of the \emph{anti-symmetric Askey--Wilson polynomials}
$P_n^-(z)$ ($n\in\ZZ_{>0}$) by
\begin{equation*}
\begin{split}
&DP_n^+=\la_nP_n^+\;\mbox{and}\;
T_1 P_n^+=-abP_n^+\;\mbox{such that}\;
P_n^+(z)-z^n\in\Span\{z^{-n},\ldots,z^{n-1}\},\\
&DP_n^-=\la_nP_n^-\;\;\mbox{and}\;\;
T_1 P_n^-=-P_n^-\;\;\mbox{such that}\;\;
P_n^-(z)-z^n\in\Span\{z^{-n},\ldots,z^{n-1}\}.
\end{split}
\end{equation*}
The notations $P_{\pm n}$, $P_n^+$ and $P_n^-$ come from
\cite[\S3.5]{N-St}.
In \cite{Sa2000} and \cite{Koo2007} the corresponding notations are
$P_n$, $Q_n$ and $E_n$, respectively.

By \cite[(3.17), (4.3), (4.9), (4.10)]{Koo2007}
$P_n^-(z)$ and $P_{\pm n}$ can be expressed by
\begin{align}
P_n^-(z;a,b,c,d)&=a^{-1}b^{-1}z^{-1}(1-az)(1-bz)P_{n-1}^+(z;qa,qb,c,d),
\label{K99}\\
P_{-n}(z;a,b,c,d)&=\frac{ab}{ab-1}\,\big(P_n^+(z;a,b,c,d)-P_n^-(z;a,b,c,d)\big),
\label{K100}\\
P_{n}(z;a,b,c,d)&=\frac{1}{(1-ab)(1-q^{2n-1}abcd)}\,
\big((1-q^n ab)(1-q^{n-1}abcd)P_n^+(z;a,b,c,d)\nn\\
&\qquad\qquad\qquad\qquad\qquad-ab(1-q^n)(1-q^{n-1}cd)P_n^-(z;a,b,c,d)\big),
\label{K101}\\
P_0(z;a,b,c,d)&=1,\label{K102}
\end{align}
where $n\in\ZZ_{>0}$\,.
From these expressions and from the symmetry of $P_n^+(z)$ in
$a,b,c,d$ we conclude that $P_n^-(z)$ and $P_{\pm n}(z)$ are symmetric in $a,b$
and symmetric in $c,d$. This also follows because $Y$, $D$ and $T_1$
obey these symmetries.

Following \cite[Definition 10.6(i)]{N-St} we define
\emph{renormalized non-symmetric Askey--Wilson polynomials} by
\be
E_n(z;a,b,c,d):=\frac{P_n(z;a,b,c,d)}{P_n(a^{-1};a,b,c,d)}\,,\quad n\in\ZZ.
\ee
Clearly $P_n^-(a^{-1};a,b,c,d)=0$. Hence, by \eqref{K99}, \eqref{K100},
\eqref{K101} and \eqref{K14},
\be
\begin{split}
&P_{-n}(z;a,b,c,d)=\frac{ab}{ab-1}\,
\frac{(ab,ac,ad;q)_n}{(q^{n-1}abcd;q)_n\,a^n}\,
E_{-n}(z;a,b,c,d),\quad n\in\ZZ_{>0}\,,\\
&P_n(z;a,b,c,d)=\frac{(qab,ac,ad;q)_n}{(q^nabcd;q)_n\,a^n}\,
E_n(z;a,b,c,d),\quad n\in\ZZ_{\ge0}\,.
\end{split}
\label{K202}
\ee
We see that $E_{\pm n}(z;a,b,c,d)$ is still symmetric in $c,d$
while for $a,b$ we have the symmetry
\be
E_{\pm n}(z;b,a,c,d)=\frac{b^n}{a^n}\,\frac{(ac,ad;q)_n}{(bc,bd;q)_n}\,
E_{\pm n}(z;a,b,c,d),\quad n\in\ZZ_{\ge0}\,.
\label{K210}
\ee
The notation $E_n$ comes from \cite[\S10.6]{N-St} and
\cite[\S5.3]{Koo-Ma}.

From \eqref{K137}, \eqref{K100} and \eqref{K101}
we can also derive the following duality \cite[Theorem 10.7]{N-St},
\cite[(101)]{Koo-Ma} for non-symmetric AW polynomials.
Put $z_e(n):=eq^n$ ($n\in\ZZ_{\ge0}$) and $z_e(-n):=e^{-1}q^{-n}$
($n\in\ZZ_{>0}$). Then
\be
E_n\big(z_a(m)^{-1};a,b,c,d\big)
=E_m\big(z_{\td a}(n)^{-1};\td a,\td b,\td c,\td d\,\big),\quad m,n\in\ZZ.
\label{K196}
\ee

Put
\be
E_n^-(z;a,b,c,d):=z^{-1}(1-az)(1-bz) E_{n-1}^+(z;qa,qb,c,d),\quad
n\in\ZZ_{>0}\,.
\label{K207}
\ee
By \cite[(97)]{Koo-Ma} we have $E_0(z;a,b,c,d)=1$ and, for $n\in\ZZ_{>0}$\,,
\be
\begin{split}
&E_{-n}(z;a,b,c,d)=E_n^+(z;a,b,c,d)-\frac{(1-q^n ab)(1-q^{n-1}abcd)
\,E_n^-(z;a,b,c,d)}
{q^{n-1}b(1-ab)(1-qab)(1-ac)(1-ad)},\\
&E_n(z;a,b,c,d)=E_n^+(z;a,b,c,d)-\frac{a(1-q^n)(1-q^{n-1}cd)\,E_n^-(z;a,b,c,d)}
{q^{n-1}(1-ab)(1-qab)(1-ac)(1-ad)}.
\end{split}
\label{K205}
\ee

By Proposition 3.2 and (3.16), (3.18), (3.19),
(3.20), (3.21) in \cite{Koo2007}, we can define
\emph{$T_0$-symmetric Askey--Wilson polynomials} $P_n^{\dagger +}(z)$
($n=0,1,2,\ldots$) and
\emph{$T_0$-anti-symmetric Askey--Wilson polynomials}
$P_n^{\dagger -}(z)$ ($n=1,2,\ldots$) by
\begin{equation*}
\begin{split}
&DP_n^{\dagger+}=\la_nP_n^{\dagger+}\;\mbox{and}\;
T_0 P_n^{\dagger+}=-\tfrac{cd}q P_n^{\dagger+}\;\mbox{such that}\;
P_n^{\dagger+}(z)-z^n\in\Span\{z^{-n},\ldots,z^{n-1}\},\\
&DP_n^{\dagger-}=\la_nP_n^{\dagger-}\;\;\mbox{and}\;\;
T_0 P_n^{\dagger-}=-P_n^{\dagger-}\;\;\mbox{such that}\;\;
P_n^{\dagger-}(z)-z^n\in\Span\{z^{-n},\ldots,z^{n-1}\}.
\end{split}
\end{equation*}
By formulas \cite[(3.18), (3.19)]{Koo2007}
we have explicit expressions
\begin{align}
P_n^{\dagger+}(z)&=q^{\half n}\,
P_n^+\big(q^{-\half}z;q^\half a,q^\half b,q^{-\half}c,q^{-\half}d\big),\\
P_n^{\dagger-}(z)&=q^{\half(n-1)}z^{-1}(c-z)(d-z)\,
P_{n-1}^+\big(q^{-1/2}z;q^{1/2} a,q^{1/2} b,q^{1/2} c,q^{1/2} d\big),
\end{align}
and by \cite[(4.1), (4.2)]{Koo2007},
see also \cite[Prop.~5.10(i),(ii)]{N-St},
$P_{\pm n}(z)$ can also be expressed as
a linear combination of $P_n^+(z)$ and $P_n^{\dagger-}(z)$:
\begin{align}
P_{-n}(z)&=\frac1{1-q^{n-1}cd}\,\big(P_n^+(z)-P_n^{\dagger-}(z)\big),\quad
\quad n\in\ZZ_{>0}\,,\label{K191}\\
P_n(z)&=\frac{q^n(1-q^{n-1}abcd)}{1-q^{2n-1}abcd}\,P_n^+(z)+
\frac{1-q^n}{1-q^{2n-1}abcd}\,P_n^{\dagger-}(z),\quad n\in\ZZ_{>0}\,.
\label{K192}.
\end{align}
In \cite[\S4]{Koo2007} \eqref{K191} and \eqref{K192} were first derived and then,
as a corollary, \eqref{K100} and \eqref{K101}.

\subsection{Non-symmetric Askey--Wilson functions}
\begin{df}
The \emph{non-symmetric Askey--Wilson function} $\goE(\ga;z)$
is the rank one case \eqref{K201} of Stokman's normalized Cherednik
kernel \cite[(6.6)]{St2003}:
\begin{align}
\goE(\ga;z)&=
\goE(\ga;z;a,b,c,d):=\frac{(bc,qad^{-1},qbd^{-1},qcd^{-1},qa^{-1}d^{-1};q)_\iy}
{(qabcd^{-1};q)_\iy
(qd^{-1}z,qd^{-1}z^{-1},q\td d^{-1}\ga,q\td d^{-1}\ga^{-1};q)_\iy}\nn\\
&\quad\times\sum_{m=0}^\iy \frac{(-1)^m (ad)^{-m} q^{\half m(m+1)}}
{(1-ab)(1-abcd^{-1})}\,
\frac{(ab,ac,abcd^{-1};q)_m}{(qbd^{-1},qcd^{-1},q;q)_m}\nn\\
&\qquad\times\left(-ab(1-q^m)(1-q^m cd^{-1})\,
E_{-m}(z;a,b,c,qd^{-1})
E_{-m}(\ga;\td a,\td b,\td c,q\td d^{-1})\right.\nn\\
&\qquad\quad\left.+(1-q^mab)(1-q^m abcd^{-1})\,
E_m(z;a,b,c,qd^{-1})
E_m(\ga;\td a,\td b,\td c,q\td d^{-1})\right),
\label{K206}
\end{align}
where the summation is absolutely and uniformly convergent for $(\ga,z)$ in
compacta of $\CC^*\times\CC^*$.
In \eqref{K206} the first term within the big brackets after the summation sign is
supposed to vanish for $m=0$ by the factor $1-q^m$.
\end{df}

\begin{prop}
\label{K208}
\textup{(Stokman, \cite[Theorem 5.17 a), (6.5)]{St2003})}\\
The non-symmetric AW function $\goE(\ga;z)$ is uniquely
characterized by the following properties.\\
\textup{\textbf{(a)}}
The function $G_{q\td d^{-1}}(\ga)^{-1} G_{qd^{-1}}(z)^{-1} \goE(\ga;z)$
depends analytically on $(\ga,z)\in\CC^*\times\CC^*$.\\
\textup{\textbf{(b)}}
$\big(T_1 \goE(\ga;\,\cdot\,)\big)(z)=\big(T_1 \goE(\,\cdot\,;z)\big)(\ga)$.\\
\textup{\textbf{(c)}}
$\td a^{-1} \big(Y \goE(\ga;\,\cdot\,)\big)(z)=\ga^{-1}\goE(\ga;z)$.\\
\textup{\textbf{(d)}}
$a^{-1}\big(Y(\td a,\td b,\td c,\td d) \goE(\,\cdot\,;z)\big)(\ga)=
z^{-1} \goE(\ga;z)$.\\
\textup{\textbf{(e)}}
$\goE(\td a^{-1};a^{-1})=1$.
\end{prop}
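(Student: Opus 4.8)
\emph{Strategy and the easy properties.}
The plan is to verify (a)--(e) for the explicit series \eqref{K206} and then prove uniqueness; the weight of the argument lies in the $Y$-eigenvalue equation (c). Property (a) is read off from \eqref{K206}: the prefactor of the sum is a constant in $(\ga,z)$ times $G_{qd^{-1}}(z)\,G_{q\td d^{-1}}(\ga)$, because $(qd^{-1}z,qd^{-1}z^{-1};q)_\iy^{-1}=G_{qd^{-1}}(z)$ and similarly in $\ga$; after dividing out these two Gaussians the series is a locally uniform limit of products of two Laurent polynomials $E_{\pm m}$, hence analytic on $\CC^*\times\CC^*$. For (e), specialise to $(\ga,z)=(\td a^{-1},a^{-1})$: by \eqref{K205}, since $E_m^-$ vanishes and $E_m^+$ equals $1$ at the inverse of the first parameter, each factor $E_{\pm m}(a^{-1};a,b,c,qd^{-1})$ and $E_{\pm m}(\td a^{-1};\td a,\td b,\td c,q\td d^{-1})$ equals $1$, and the elementary identity
\[
\frac{-ab(1-q^m)(1-q^m cd^{-1})+(1-q^m ab)(1-q^m abcd^{-1})}{1-ab}=1-q^{2m}abcd^{-1}
\]
turns the $m$-th summand of \eqref{K206} into the $m$-th summand of the symmetric kernel \eqref{K110}; since the two prefactors coincide, $\goE(\td a^{-1};a^{-1})=\goE^+(\td a^{-1};a^{-1})=1$ by \eqref{K119}.

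\emph{Property (b) and the ``symmetric $+$ anti-symmetric'' form.}
I would substitute \eqref{K205} into \eqref{K206}, expanding each summand into the four bilinears $E_m^{\pm}(z)\,E_m^{\pm}(\ga)$. By Lemma \ref{K174}, $T_1$ acting in $z$ (parameters $a,b$) multiplies an $E_m^+(z)$-factor by $-ab$ and an $E_m^-(z)$-factor by $-1$, and $T_1$ acting in $\ga$ (parameters $\td a,\td b$) produces the same eigenvalues, since $\td a\td b=ab$. Hence the two sides of (b) already agree on the $E^+E^+$ and $E^-E^-$ contributions, and (b) reduces to showing that the coefficient of $E_m^+(z)E_m^-(\ga)$ equals that of $E_m^-(z)E_m^+(\ga)$. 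Writing these out from the two instances of \eqref{K205} and simplifying with $\td a\td b=ab$, $\td a\td c=ac$ (from \eqref{79}) and $q\,\td a\td d^{-1}=bc$, $\td c\td d^{-1}=cd^{-1}$ (from \eqref{K105}), the required equality collapses to $ab/\td b=\td a/a$, i.e.\ once more to $\td a\td b=ab$; this is a bounded computation. The same substitution, re-summed by means of \eqref{K110}, also delivers the announced expression of $\goE$ as a symmetric AW function plus an anti-symmetric term with shifted parameters.

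\emph{Properties (c) and (d).}
This is the core of the argument and the step I expect to be hardest. Applying $Y=Y(a,b,c,d)$ in the $z$-variable to \eqref{K206} is \emph{not} diagonal term-by-term, because the $z$-factors $E_{\pm m}(z;a,b,c,qd^{-1})$ are eigenfunctions of $Y(a,b,c,qd^{-1})$, not of $Y(a,b,c,d)$; one needs the action of $Y(a,b,c,d)$ on these shifted-parameter polynomials. I would try to extract it from the Gaussian conjugation \eqref{K169} together with the eigenvalue relations \eqref{K193}--\eqref{K195}, and then re-sum to reach $Y\goE=\td a\,\ga^{-1}\goE$; failing a short argument along these lines, one appeals to the direct verification of Appendix~B that the ``symmetric $+$ anti-symmetric'' form of $\goE$ satisfies this equation, or simply cites Stokman's construction \cite{St2003}, in which (c) is built into the definition. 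This is the main obstacle. Granting (c), property (d) is immediate: the prefactor and all scalar coefficients of \eqref{K206} are invariant, and the two AW-polynomial factors in each summand swap, under the simultaneous exchange $z\leftrightarrow\ga$, $(a,b,c,d)\leftrightarrow(\td a,\td b,\td c,\td d)$ (using $\td a\td d=ad$, $\td b\td c=qad^{-1}$, and the like from \eqref{79}, \eqref{K105}), so $\goE(\ga;z;a,b,c,d)=\goE(z;\ga;\td a,\td b,\td c,\td d)$; applying (c) to the right-hand side, and using that the dual of the dual recovers $a$ (Lemma \ref{K143}(iv)), gives (d).

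\emph{Uniqueness.}
Let $F$ satisfy (a)--(e) and fix generic $\ga$. By (a), $F(\ga;\,\cdot\,)\in\FSO\,G_{qd^{-1}}$, and by (c) it lies in the eigenspace of $\pi_{a,b,c,d}(Y)$ on that space for the eigenvalue $\td a\ga^{-1}$; as in the analysis of the $Y$-spectrum underlying the symmetric AW function (Proposition \ref{K91} with \eqref{K110}), that eigenspace is at most two-dimensional for generic $\ga$, and the reflection condition (b) selects a single line in it, so $F(\ga;z)=c(\ga)\,\goE(\ga;z)$ for a scalar $c(\ga)$. Substituting this into (d) and using the linear independence, as functions of $z$, of the finitely many $q$-translates of $\goE$ produced by $\pi_{\td a,\td b,\td c,\td d}(Y)$, one finds that $c$ is invariant under those translates, hence --- being of controlled growth by (a) --- constant; finally (e) fixes the constant to $1$, so $F=\goE$. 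In rank one this is exactly Stokman's \cite[Theorem 5.17 a)]{St2003}.
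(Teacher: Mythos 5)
There are two genuine gaps. First, your reduction of property (b) is logically insufficient. Since $T_1$ acting in $z$ sends $E_m^+(z)E_m^-(\ga)$ to $-ab$ times itself while $T_1$ acting in $\ga$ sends that same bilinear to $-1$ times itself (and vice versa for $E_m^-(z)E_m^+(\ga)$), equating the two sides of (b) and using the linear independence of the two cross bilinears forces \emph{both} cross coefficients to vanish, not merely to coincide: if both equalled a common nonzero $\beta_m$, the two sides of (b) would still differ by $(1-ab)\,\beta_m\big(E_m^+(z)E_m^-(\ga)-E_m^-(z)E_m^+(\ga)\big)$. The computation does in fact work out, but for the stronger reason: substituting \eqref{K205} one finds the coefficient of $E_m^+(z)E_m^-(\ga)$ proportional to $ab/\td b-\td a=0$ and that of $E_m^-(z)E_m^+(\ga)$ proportional to $ab/b-a=0$ --- this is exactly the cancellation recorded in \eqref{K211} and Remark \ref{K212}. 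Your stated target identity ``$ab/\td b=\td a/a$'' is neither of these and would amount to $a^2b=ab$. Second, and more seriously, property (c) --- the one substantive analytic claim, to which the paper devotes all of Appendix B --- is not proved. The Gaussian conjugation route you sketch does not work as hoped: \eqref{K173} gives $\pi_{a,b,c,d}(Y)=G_{qd^{-1}}(Z)\,\pi_{a,b,c,qd^{-1}}(\tau^{-1}(Y))\,G_{qd^{-1}}(Z)^{-1}$ with $\tau^{-1}(Y)=q^{-1}c\,T_1T_0^{-1}Z$, which is not $Y$ with shifted parameters and is not diagonal on the $E_{\pm m}(\,\cdot\,;a,b,c,qd^{-1})$, so there is no term-by-term re-summation; your fallback is a bare citation. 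The uniqueness argument is likewise only a sketch: the claimed two-dimensionality of the $Y$-eigenspace on $\FSO\,G_{qd^{-1}}$ and the constancy of $c(\ga)$ both need real proofs (note $c(\ga)=F/\goE$ need not be analytic on all of $\CC^*$ where $\goE$ has zeros, so the Laurent-coefficient argument for $q$-periodic analytic functions does not apply directly).

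For calibration: the paper does not reprove this proposition either --- it is attributed to Stokman, where (c), (d) and uniqueness are built into the construction of the Cherednik kernel --- and it only rederives the verifiable consequences: properties (a), (b), (e) and Corollary \ref{K209} are noted to follow from the decomposition \eqref{K85}, and property (c) is given an independent proof in Appendix~B via that decomposition. Your treatment of (a), (e), and of (d) as a consequence of (c) plus the duality read off from \eqref{K206} is fine. If you want a self-contained proof you must either reproduce Stokman's argument for (c) and for uniqueness, or route (c) through the decomposition \eqref{K85} and the lemmas of Appendix~B.
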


For (c) and (d) we took Remark \ref{K145} into account.

\begin{cor}
\label{K209}
\textup{(Stokman, \cite[(6.19), Theorem 6.2 and Theorem 6.9 a)]{St2003})}\\
The non-symmetric AW function $\goE(\ga;z)$ satisfies the
wider normalization formulas
\be
\goE(\ga;a^{-1})=1=\goE(\td a^{-1};z)
\label{K48}
\ee
and the duality
\be
\goE(\ga;z;a,b,c,d)=\goE(z,\ga;\td a,\td b,\td c,\td d),
\label{K50}
\ee
and it reduces for $\ga=(q^n\td a)^{\pm1}$ to non-symmetric AW
polynomials:
\be
\begin{split}
\goE(q^n\td a;z;a,b,c,d)&=E_{-n}(z;a,b,c,d),\quad n\in\ZZ_{>0}\,,\\
\goE(q^{-n}\td a^{-1};z;a,b,c,d)&=E_n(z;a,b,c,d),\quad n\in\ZZ_{\ge0}\,.
\end{split}
\label{K96}
\ee
\end{cor}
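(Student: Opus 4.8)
The three identities \eqref{K48}, \eqref{K50}, \eqref{K96} are Stokman's \cite{St2003} and could be transported to the present notation through the normalization dictionary \eqref{K57}, \eqref{K53}, \eqref{K202} between his kernel and \eqref{K206}. The plan I would follow instead derives them from the explicit expansion \eqref{K206} together with the characterization in Proposition \ref{K208}. I would treat the duality \eqref{K50} first, reading it straight off \eqref{K206} exactly as \eqref{61} is read off \eqref{K110}: under $(\ga,z;a,b,c,d)\mapsto(z,\ga;\td a,\td b,\td c,\td d)$ one checks from \eqref{79} and \eqref{K105} that $\td b\td c=qad^{-1}$, $q\td a\td d^{-1}=bc$, $q\td b\td d^{-1}=qbd^{-1}$, $q\td c\td d^{-1}=qcd^{-1}$, $q\td a^{-1}\td d^{-1}=qa^{-1}d^{-1}$ and $q\td a\td b\td c\td d^{-1}=qabcd^{-1}$, so the prefactor of \eqref{K206} is invariant (its numerator and the factor $(qabcd^{-1};q)_\iy$ are permuted among themselves while $(qd^{-1}z,qd^{-1}z^{-1};q)_\iy$ and $(q\td d^{-1}\ga,q\td d^{-1}\ga^{-1};q)_\iy$ are interchanged). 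Likewise each parameter-dependent quantity in the summand — $ad$, $ab$, $ac$, $abcd^{-1}$, $bd^{-1}$, $cd^{-1}$, hence all of $(ab;q)_m$, $(ac;q)_m$, $(abcd^{-1};q)_m$, $(qbd^{-1};q)_m$, $(qcd^{-1};q)_m$ and $(-1)^m(ad)^{-m}q^{\half m(m+1)}$ — is invariant under $(a,b,c,d)\mapsto(\td a,\td b,\td c,\td d)$, and since taking duals twice recovers the original parameters (Lemma \ref{K143}(iv)) the product $E_{\pm m}(z;a,b,c,qd^{-1})\,E_{\pm m}(\ga;\td a,\td b,\td c,q\td d^{-1})$ is carried to itself with its two factors interchanged. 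Hence the right-hand side of \eqref{K206} is unchanged, which is \eqref{K50}.

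For \eqref{K48} and \eqref{K96} I would argue through Proposition \ref{K208}. For generic $\ga$ the function $z\mapsto\goE(\ga;z)$ lies in $\FSO\,G_{qd^{-1}}$ by part (a) and, by part (c), is an eigenfunction of $Y$ with eigenvalue $\td a\ga^{-1}$. For $\ga=q^n\td a$ ($n\in\ZZ_{>0}$) this eigenvalue is $q^{-n}$, and for $\ga=q^{-n}\td a^{-1}$ ($n\in\ZZ_{\ge0}$) it is $q^{n-1}abcd$ — precisely the eigenvalues characterizing the Laurent polynomials $P_{-n}$, $P_n$ in \eqref{K193}, \eqref{K195}, which lie in $\FSA\subset\FSO\,G_{qd^{-1}}$ since $G_{qd^{-1}}^{-1}$ is holomorphic on $\CC^*$. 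Granting that the $Y$-eigenspace in $\FSO\,G_{qd^{-1}}$ for each of these eigenvalues is one-dimensional (the one-variable uniqueness underlying Proposition \ref{K208}), we obtain $\goE(q^n\td a;z)=c_n\,E_{-n}(z;a,b,c,d)$ and $\goE(q^{-n}\td a^{-1};z)=c_n'\,E_n(z;a,b,c,d)$ for scalars $c_n,c_n'$. Taking $n=0$ in the second identity (where $E_0\equiv1$) gives $\goE(\td a^{-1};z)\equiv c_0'$; evaluating at $z=a^{-1}$ and invoking part (e) forces $c_0'=1$, which is the second equality of \eqref{K48}. Applying the latter with $(\td a,\td b,\td c,\td d)$ in place of $(a,b,c,d)$ — using $\td{\td a}=a$, etc.\ (Lemma \ref{K143}(iv)) — and combining with \eqref{K50} yields $\goE(\ga;a^{-1};a,b,c,d)=\goE(a^{-1};\ga;\td a,\td b,\td c,\td d)=1$, the first equality of \eqref{K48}. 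Finally, evaluating the two displayed identities at $z=a^{-1}$, where $E_{\mp n}(a^{-1})=1$ by the normalization of the renormalized non-symmetric AW polynomials and $\goE((q^n\td a)^{\pm1};a^{-1})=1$ by the part of \eqref{K48} just established, gives $c_n=c_n'=1$, which is \eqref{K96}.

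The delicate step, the one I expect to be the main obstacle, is the one-variable uniqueness invoked above: $\FSO\,G_{qd^{-1}}$ does contain non-polynomial functions, so one genuinely has to show that at the spectral values $q^{-n}$, $q^{n-1}abcd$ the polynomial solution is the only admissible eigenfunction of $Y$. I would settle this either by citing the corresponding material in \cite[\S6]{St2003} and \cite{N-St}, or self-containedly from the two-dimensional solution theory of the $q$-difference eigenvalue equation $Yf=\mu f$ together with the growth constraint carried by the Gaussian factor that picks out the polynomial solution. It is worth noting that the naive shortcut — substituting $\ga=(q^n\td a)^{\pm1}$ directly into \eqref{K206} and hoping the series truncates — fails, since the prefactor of \eqref{K206} carries the Gaussian $G_{qd^{-1}}(z)$, so what remains is an infinite Askey--Wilson expansion rather than a polynomial; the genuinely finite reduction appears only after \eqref{K206} is rewritten as a symmetric plus an anti-symmetric term, which is a separate development.
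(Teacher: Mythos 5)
Your handling of the duality \eqref{K50} is correct and is precisely what the paper means by its one--line remark that \eqref{K50} ``follows in a straightforward way from \eqref{K206}'': by \eqref{K105} and Lemma \ref{K143}(iv) every parameter--dependent factor in \eqref{K206} is invariant under $(\ga,z;a,b,c,d)\mapsto(z,\ga;\td a,\td b,\td c,\td d)$ while the two Gaussian factors and the two polynomial factors in each summand are interchanged. Your observation that substituting $\ga=(q^n\td a)^{\pm1}$ into \eqref{K206} does not truncate the series is also correct.

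The derivation of \eqref{K48} and \eqref{K96}, however, rests on a premise that is false: the $Y$-eigenspaces in $\FSO\,G_{qd^{-1}}$ at the spectral values $q^{-n}$ and $q^{n-1}abcd$ are \emph{not} one-dimensional, and the ``growth constraint carried by the Gaussian factor'' does not cut the solution space down to the polynomial. The paper itself supplies the counterexamples. For generic $\ga$ the function $\goF(\ga;\cdot)$ of \eqref{K22} lies in $\FSO\,G_{qd^{-1}}$, satisfies the same eigenvalue equation \eqref{K23} as $\goE(\ga;\cdot)$, and by Remark \ref{K212} is not a constant multiple of it; so the admissible eigenspace is already two-dimensional for generic eigenvalues. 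At the discrete points $\goE$ and $\goF$ do collapse onto the same polynomial, but the eigenspace does not collapse with them: Suslov's second solution $\frac{(az,az^{-1};q)_\iy}{(qd^{-1}z,qd^{-1}z^{-1};q)_\iy}\,\goR(\ga;z;qd^{-1},b,c,qa^{-1})$, i.e.\ the second term of \eqref{K80}, remains at $\ga=q^n\td a$ a nonzero, non-polynomial element of $\FSO^+G_{qd^{-1}}$ annihilated by $L^{a,b,c,d}-\la_n$ (only its \emph{coefficient} $(\td a\ga^{-1};q)_\iy$ in \eqref{K80} vanishes there). Together with $E_n^+$ and, for $n\ge1$, the antisymmetric polynomial $P_n^-$, the $D$-eigenspace with eigenvalue $\la_n$ inside $\FSO\,G_{qd^{-1}}$ is at least three-dimensional; since $(Y-q^{-n})(Y-q^{n-1}abcd)$ annihilates it, the two $Y$-eigenspaces at $q^{-n}$ and $q^{n-1}abcd$ have dimensions summing to at least three and cannot both be one-dimensional. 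The uniqueness behind Proposition \ref{K208} is intrinsically a \emph{two-variable} statement — it needs the eigenvalue equations (c) and (d) in both variables together with (a), (b), (e) — and cannot be localized to one variable as you propose. The routes that do work are the ones the paper indicates: quote Stokman, or use the decomposition \eqref{K85} together with \eqref{K13}, \eqref{K119} and \eqref{K205}, where the reduction at $\ga=(q^n\td a)^{\pm1}$ comes from explicit vanishing of coefficients rather than from an eigenspace dimension count.
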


Note that the duality \eqref{K50} follows in a straightforward way from
\eqref{K206}.

\begin{theorem}
The non-symmetric AW function can be expressed in terms of
(symmetric) AW functions $\goE^+(\ga;z;a,b,c,d)$ given by
\eqref{K54} or \eqref{K110}:
\begin{align}
&\goE(\ga;z;a,b,c,d)=\goE^+(\ga;z;a,b,c,d)-
\frac{\sqrt{qab^{-1}cd}}{(1-ab)(1-qab)(1-ac)(1-ad)}\nn\\
&\qquad\qquad\qquad\qquad\times \ga^{-1}(1-\td a\ga)(1-\td b\ga)\,
z^{-1}(1-az)(1-bz)\,\goE^+(\ga;z;qa,qb,c,d).
\label{K85}
\end{align}
\end{theorem}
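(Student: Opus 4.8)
The plan is to derive \eqref{K85} by inserting into Stokman's expansion \eqref{K206} the known decomposition formulas for the non-symmetric AW polynomials in terms of symmetric and anti-symmetric ones, and then recognizing the two resulting sums as symmetric AW function expansions of the type \eqref{K110}. Concretely, I would substitute \eqref{K205} for the four factors $E_{-m}(z;a,b,c,qd^{-1})$, $E_{-m}(\ga;\td a,\td b,\td c,q\td d^{-1})$, $E_m(z;a,b,c,qd^{-1})$, $E_m(\ga;\td a,\td b,\td c,q\td d^{-1})$ appearing in \eqref{K206}, using also the definition \eqref{K207} of $E_n^-$. Each of the four products in the big bracket then expands into four pieces of the form (symmetric)$\times$(symmetric), (symmetric)$\times E^-$, $E^-\times$(symmetric), $E^-\times E^-$.

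The key simplification I expect is that the linear combination of coefficients appearing in \eqref{K206}, namely $-ab(1-q^m)(1-q^mcd^{-1})(\cdots)+(1-q^mab)(1-q^mabcd^{-1})(\cdots)$ applied to the decompositions \eqref{K205}, is engineered precisely so that the $E^+\times E^+$ terms and the $E^-\times E^-$ terms recombine, while the mixed $E^+\times E^-$ and $E^-\times E^+$ terms cancel. This is the same mechanism that makes \eqref{K101} and \eqref{K100} work at the polynomial level: \eqref{K206} is really the Cherednik kernel built from $P_{\pm m}$, and the combination of \eqref{K100}, \eqref{K101} collapses nicely because of the identity relating the various $(1-q^m\cdot)$ factors. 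So I would first verify that, after substitution, the $E_m^+(z;a,b,c,qd^{-1})E_m^+(\ga;\td a,\td b,\td c,q\td d^{-1})$ coefficient is exactly the summand of \eqref{K110} (with $d$ replaced by $qd^{-1}$, so that $a,b,c,qd^{-1}$ become the relevant parameters and their duals are computed via \eqref{79}--\eqref{K105}), yielding $\goE^+(\ga;z;a,b,c,d)$ after multiplying by the prefactor; and that the $E_m^-\times E_m^-$ coefficient, after using \eqref{K207} to pull out the common factors $z^{-1}(1-az)(1-bz)$ and $\ga^{-1}(1-\td a\ga)(1-\td b\ga)$ (note $\td a,\td b$ here are the duals relative to $a,b,c,qd^{-1}$), is the summand of \eqref{K110} with parameters $(qa,qb,c,d)$; this produces $\goE^+(\ga;z;qa,qb,c,d)$ times those prefactors. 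The mixed terms must then be shown to cancel in pairs, using the explicit coefficients in \eqref{K205} and \eqref{K206}.

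The main obstacle will be the bookkeeping of $q$-shifted factorials and the dual-parameter substitutions: the symmetric AW function $\goE^+$ in \eqref{K110} has parameters $(a,b,c,qd^{-1})$ on the $z$-side and $(\td a,\td b,\td c,q\td d^{-1})$ on the $\ga$-side, but \eqref{K85} wants $\goE^+(\ga;z;a,b,c,d)$ and $\goE^+(\ga;z;qa,qb,c,d)$, so I must carefully track how the dual parameters of $(a,b,c,d)$ relate to those of $(a,b,c,qd^{-1})$ and of $(qa,qb,c,d)$ via \eqref{79} and \eqref{K105} — in particular that $\sqrt{q^{-1}(qa)(qb)cd}=q^{1/2}\sqrt{abcd}=q\td a$ type relations hold, which is where the factor $\sqrt{qab^{-1}cd}$ in \eqref{K85} originates. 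Matching the prefactors $\frac{(bc,qad^{-1},\ldots;q)_\iy}{(qabcd^{-1};q)_\iy(qd^{-1}z,\ldots;q)_\iy}$ between \eqref{K206} and two copies of \eqref{K110} (one with the original parameters, one with $(qa,qb,c,d)$) is the most error-prone part, but it is purely routine manipulation of \cite[\S1.2]{Gas-R} identities.

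Alternatively, and perhaps more cleanly, one can verify \eqref{K85} directly against the characterization in Proposition \ref{K208}: the right-hand side satisfies (a) by Proposition \ref{K91} applied to both $\goE^+$ terms, satisfies (e)/\eqref{K48} by \eqref{K119} together with the vanishing of the second term at $z=a^{-1}$ (because of the factor $1-az$) and at $\ga=\td a^{-1}$ analogously, satisfies (b) by \eqref{eq:T11} in Lemma \ref{K174} (the anti-symmetric factor $z^{-1}(1-az)(1-bz)$ times a symmetric function is a $-1$-eigenfunction of $T_1$, and its $\ga$-counterpart likewise, so the $T_1$-symmetry property holds), and satisfies the eigenvalue property (c) using \eqref{K18} for $\goE^+(\ga;z;a,b,c,d)$ together with a shifted-parameter eigenvalue identity for $\goE^+(\ga;z;qa,qb,c,d)$; (d) then follows by duality \eqref{K50}. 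I would present the expansion-based proof as the primary argument and mention the characterization-based verification, deferring the $Y$-eigenfunction check to Appendix B as the paper already announces.
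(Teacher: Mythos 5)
Your primary argument --- substituting \eqref{K205} into the bracketed expression of \eqref{K206}, observing that the mixed $E^+\times E^-$ terms cancel, and recognizing the surviving $E^+\times E^+$ and $E^-\times E^-$ sums as two instances of the expansion \eqref{K110} (the latter with shifted parameters $(qa,qb,c,d)$ after extracting the factors $z^{-1}(1-az)(1-bz)$ and $\ga^{-1}(1-\td a\ga)(1-\td b\ga)$ via \eqref{K207}) --- is exactly the proof the paper gives, including the cancellation of cross terms that the paper highlights in Remark \ref{K212}. Your alternative verification against the characterization in Proposition \ref{K208} likewise matches the paper's own remarks following the theorem, with the $Y$-eigenfunction check deferred to Appendix B.
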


\begin{proof}
Substitute \eqref{K205} four times in the expression in big brackets after the
summation sign in \eqref{K206} and rearrange. We get:
\begin{align}
&-ab(1-q^m)(1-q^m cd^{-1})\,
E_{-m}(z;a,b,c,qd^{-1})E_{-m}(\ga;\td a,\td b,\td c,q\td d^{-1})\nn\\
&\qquad+(1-q^mab)(1-q^m abcd^{-1})\,
E_m(z;a,b,c,qd^{-1})E_m(\ga;\td a,\td b,\td c,q\td d^{-1})\nn\\
&=-ab(1-q^m)(1-q^m cd^{-1})\nn\\
&\qquad\times{\Big(E_m^+(z;a,b,c,qd^{-1})-
\frac{(1-q^m ab)(1-q^m abcd^{-1})}
{q^{m-1}b(1-ab)(1-qab)(1-ac)(1-qad^{-1})}\,E_m^-(z;a,b,c,qd^{-1})\Big)}\nn\\
&\qquad\times\Big(E_{-m}^+(\ga;\td a,\td b,\td c,q\td d^{-1})-
\frac{(1-q^m ab)(1-q^m abcd^{-1})}
{q^{m-1}\td b(1-ab)(1-qab)(1-ac)(1-bc)}\,
E_{-m}^-(\ga;\td a,\td b,\td c,q\td d^{-1})\Big)\nn\\
&+(1-q^mab)(1-q^m abcd^{-1})\nn\\
&\qquad\times\Big(E_m^+(z;a,b,c,qd^{-1})-\frac{a(1-q^m)(1-q^m cd^{-1})}
{q^{m-1}(1-ab)(1-qab)(1-ac)(1-qad^{-1})}\,E_m^-(z;a,b,c,qd^{-1})\Big)\nn\\
&\qquad\times\Big(E_m^+(\ga;\td a,\td b,\td c,q\td d^{-1})
-\frac{\td a(1-q^m)(1-q^m cd^{-1})}
{q^{m-1}(1-ab)(1-qab)(1-ac)(1-bc)}
E_m^-(\ga;\td a,\td b,\td c,q\td d^{-1})\Big)\nn\\
&=(1-ab)(1-q^{2m}abcd^{-1})E_m^+(z;a,b,c,qd^{-1})
E_m^+(\ga;\td a,\td b,\td c,q\td d^{-1})\nn\\
&\qquad
-\frac{\td a(1-q^m)(1-q^m ab)(1-q^m cd^{-1})(1-q^m abcd^{-1})(1-q^{2m}abcd^{-1})}
{q^{2m-2} (1-ab)(1-qab)^2 (1-ac)^2 (1-qad^{-1})(1-bc)}\nn\\
&\qquad\qquad\qquad\qquad
\times E_m^-(z;a,b,c,qd^{-1}) E_m^-(\ga;\td a,\td b,\td c,q\td d^{-1}).
\label{K211}
\end{align}
So $\goE(\ga;z;a,b,c,d)$ can be written as the sum of two terms, namely a term
\begin{align*}
&\frac{(bc,qad^{-1},qbd^{-1},qcd^{-1},qa^{-1}d^{-1};q)_\iy}
{(qabcd^{-1};q)_\iy
(qd^{-1}z,qd^{-1}z^{-1},q\td d^{-1}\ga,q\td d^{-1}\ga^{-1};q)_\iy}\,
\sum_{m=0}^\iy\frac{(-1)^m (ad)^{-m} q^{\half m(m+1)}}
{(1-ab)(1-abcd^{-1})}\,\\
&\qquad\times\frac{(ab,ac,abcd^{-1};q)_m}{(qbd^{-1},qcd^{-1},q;q)_m}\,
(1-ab)(1-q^{2m}abcd^{-1})E_m^+(z;a,b,c,qd^{-1})
E_m^+(\ga;\td a,\td b,\td c,q\td d^{-1}),
\end{align*}
which equals $\goE^+(\ga;z;a,b,c,d)$ by \eqref{K110}, and a term
\begin{align*}
&\goE^-:=-\frac{(bc,qad^{-1},qbd^{-1},qcd^{-1},qa^{-1}d^{-1};q)_\iy}
{(qabcd^{-1};q)_\iy
(qd^{-1}z,qd^{-1}z^{-1},q\td d^{-1}\ga,q\td d^{-1}\ga^{-1};q)_\iy}\,
\sum_{m=1}^\iy\frac{(-1)^m (ad)^{-m} q^{\half m(m+1)}}
{(1-ab)(1-abcd^{-1})}\,\\
&\quad\times\frac{(ab,ac,abcd^{-1};q)_m}{(qbd^{-1},qcd^{-1},q;q)_m}\,
\frac{\td a(1-q^m)(1-q^m ab)(1-q^m cd^{-1})(1-q^m abcd^{-1})(1-q^{2m}abcd^{-1})}
{q^{2m-2} (1-ab)(1-qab)^2 (1-ac)^2 (1-qad^{-1})(1-bc)}\\
&\qquad\qquad\qquad\qquad
\times E_m^-(z;a,b,c,qd^{-1}) E_m^-(\ga;\td a,\td b,\td c,q\td d^{-1}).
\end{align*}
Now observe from \eqref{K110} and \eqref{K207} that
\begin{align*}
&\ga^{-1}(1-\td a\ga)(1-\td b\ga)\,z^{-1}(1-az)(1-bz)\,\goE^+(\ga;z;qa,qb,c,d)\\
&=
\frac{(qbc,q^2ad^{-1},q^2bd^{-1},qcd^{-1},a^{-1}d^{-1};q)_\iy}
{(q^3abcd^{-1};q)_\iy
(qd^{-1}z,qd^{-1}z^{-1},q\td d^{-1}\ga,q\td d^{-1}\ga^{-1};q)_\iy}\,
\sum_{m=1}^\iy (-1)^{m-1} (qad)^{-m} q^{\half m(m-1)}\\
&\qquad\times\frac{1-q^{2m}abcd^{-1}}{1-q^2 abcd^{-1}}\,
\frac{(q^2ab,qac,q^2abcd^{-1};q)_{m-1}}
{(q^2 bd^{-1},qcd^{-1},q;q)_{m-1}}\, E_m^-(z)(z;a,b,c,qd^{-1})\,
E_m^-(\ga;\td a,\td b,\td c,q\td d^{-1}).
\end{align*}
It follows that $\goE^-$ is equal to the second term of the
right-hand side of \eqref{K85}.
\end{proof}

\begin{remark}
\label{K212}
Note that after the last equality in \eqref{K211} the cross terms, i.e.,
the terms with
$E_m^+(z;a,b,c,qd^{-1}) E_m^-(\ga;\td a,\td b,\td c,q\td d^{-1})$ and
$E_m^-(z;a,b,c,qd^{-1}) E_m^+(\ga;\td a,\td b,\td c,q\td d^{-1})$, have
vanished
(just as it should be because of property (b) in Proposition \ref{K208}).
If we try something similar by substituting (normalized versions of)
\eqref{K191} and \eqref{K192} in the first part of \eqref{K211} then
the cross terms do not vanish. In particular, this shows that
$\goF(\ga;z)$, defined in \eqref{K22}, is not equal to $\goE(\ga;z)$ up to
a constant factor.
\end{remark}

Note that properties (a), (b) and (e) in Proposition \ref{K208} and
all formulas in Corollary \ref{K209} also follow from \eqref{K85} together
with properties observed for the symmetric AW function.

As for property (c) (or by duality (d)) in Proposition \ref{K208}
note that, by \eqref{K96} and \eqref{K13},
formula \eqref{K85} extends \eqref{K205}
(or \eqref{K202}). This suggests an alternative proof of property (c) by
imitating the proof for \eqref{K202} as given in \cite[Theorem 4.1]{Koo2007}.
But the proof there is first given for $P_{\pm n}(z)$ as represented by 
\eqref{K191}, \eqref{K192}, and, using that result, next for $P_{\pm n}(z)$ as
represented by \eqref{K202}. In Appendix B we will give a proof of
property (c) by proving first the eigenvalue equation for a function
extending \eqref{K191}, \eqref{K192} and, using this result, next for the
non-symmetric AW function as given by \eqref{K85}.

\begin{theorem}
The non-symmetric AW function $\goE(\ga;z)$ satisfies symmetries
\begin{align}
\goE(\ga;z;a,b,c,d)&=
\frac{(qad^{-1},qa^{-1}d^{-1};q)_\iy}{(ac,a^{-1}c;q)_\iy}\,
\frac{G_{qd^{-1}}(z)}{G_c(z)}\,
\goE(\ga;z;a,b,qd^{-1},qc^{-1}),\label{K51}\\
\goE(\ga;z;a,b,c,d)
&=\frac{(bc,qa^{-1}d^{-1};q)_\iy}{(ac,qb^{-1}d^{-1};q)_\iy}\,
\frac{G_{q\td d^{-1}}(\ga)}{G_{\td c}(\ga)}\,
\goE(\ga;z;b,a,c,d).
\label{K93}
\end{align}
\end{theorem}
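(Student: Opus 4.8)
The plan is to mirror the symmetric case. First I would derive \eqref{K51} from the decomposition \eqref{K85} together with the $t_4$-symmetry \eqref{K25} of the symmetric AW function proved in Theorem~\ref{th-sym-t4}, and then obtain \eqref{K93} from \eqref{K51} and the duality \eqref{K50}, exactly as \eqref{K56} was obtained from \eqref{K25} and \eqref{61} in Remark~\ref{K94}(3).

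For \eqref{K51} I would start from \eqref{K85} written at the parameters $(a,b,c,d)$ and apply \eqref{K25} to each of the two symmetric AW functions occurring there: to $\goE^+(\ga;z;a,b,c,d)$ directly, and to $\goE^+(\ga;z;qa,qb,c,d)$ via \eqref{K25} with $(a,b,c,d)$ replaced by $(qa,qb,c,d)$. This replaces them by $\goE^+(\ga;z;a,b,qd^{-1},qc^{-1})$ and $\goE^+(\ga;z;qa,qb,qd^{-1},qc^{-1})$, which are precisely the two symmetric AW functions appearing in \eqref{K85} written at $(a,b,qd^{-1},qc^{-1})$. Using \eqref{79} and \eqref{K105} one checks that the dual parameters of $(a,b,qd^{-1},qc^{-1})$ are $(\td b,\td a,\td c,q\td d^{-1})$, so the factor $\ga^{-1}(1-\td a\ga)(1-\td b\ga)\,z^{-1}(1-az)(1-bz)$ multiplying the anti-symmetric term in \eqref{K85} is the same for $(a,b,c,d)$ and for $(a,b,qd^{-1},qc^{-1})$. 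Comparing the two expansions then reduces \eqref{K51} to a single scalar identity: the quotient of the two $t_4$-prefactors coming from \eqref{K25} (at $(a,b,c,d)$ versus at $(qa,qb,c,d)$) must equal the quotient of the two anti-symmetric coefficients of \eqref{K85} (at $(a,b,c,d)$ versus at $(a,b,qd^{-1},qc^{-1})$).

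I expect this prefactor comparison to be the only real computation, and its only subtlety to be the branch of the square root in the factor $\sqrt{qab^{-1}cd}$ of \eqref{K85} under $(c,d)\mapsto(qd^{-1},qc^{-1})$. Squaring to clear that square root, both quotients reduce, after repeated use of $(x;q)_\iy=(1-x)(qx;q)_\iy$, to the elementary rational expression $q(1-ac)(1-ad)/\bigl((c-qa)(d-qa)\bigr)$; the overall sign is $+$ by specializing to real positive parameters (Assumption~\ref{K213}) and analytic continuation. Once the identity is established, $\goE(\ga;z;a,b,c,d)$ becomes the $t_4$-prefactor of \eqref{K25} times $\goE^+(\ga;z;a,b,qd^{-1},qc^{-1})$ minus (the anti-symmetric coefficient at $(a,b,qd^{-1},qc^{-1})$) times $\goE^+(\ga;z;qa,qb,qd^{-1},qc^{-1})$, and by \eqref{K85} at $(a,b,qd^{-1},qc^{-1})$ this combination equals $\goE(\ga;z;a,b,qd^{-1},qc^{-1})$; this is \eqref{K51}.

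Finally, I would deduce \eqref{K93} from \eqref{K51} and the duality \eqref{K50}. Apply \eqref{K50} to the left-hand side of \eqref{K93} to get $\goE(z;\ga;\td a,\td b,\td c,\td d)$; then apply \eqref{K51} to this function, with the roles of $\ga$ and $z$ interchanged and $(a,b,c,d)$ replaced by $(\td a,\td b,\td c,\td d)$; then apply \eqref{K50} once more. Since $q\td a\td d^{-1}=bc$, $q\td a^{-1}\td d^{-1}=qa^{-1}d^{-1}$, $\td a\td c=ac$ and $\td a^{-1}\td c=qb^{-1}d^{-1}$ (all immediate from \eqref{79} and \eqref{K105}), the $t_4$-prefactor produced by \eqref{K51} is exactly the prefactor in \eqref{K93}; and since the dual parameters of $(\td a,\td b,q\td d^{-1},q\td c^{-1})$ are $(b,a,c,d)$, the remaining function is $\goE(\ga;z;b,a,c,d)$, which gives \eqref{K93}. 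Alternatively, \eqref{K93} can be obtained directly, in the same way as \eqref{K51}, from \eqref{K85} and the $a\leftrightarrow b$ symmetry \eqref{K56}, at the cost of an analogous prefactor identity.
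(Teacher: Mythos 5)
Your proposal is correct and follows one of the two routes the paper itself gives: the paper's proof derives both symmetries either directly from the kernel expansion \eqref{K206} together with the polynomial symmetry \eqref{K210}, or, exactly as you do, from the decomposition \eqref{K85} combined with \eqref{K25}/\eqref{K56}, noting also that \eqref{K51} and \eqref{K93} imply each other by the duality \eqref{K50}. The prefactor identity you reduce \eqref{K51} to does check out (both quotients equal $(c-qa)(d-qa)/\bigl(q(1-ac)(1-ad)\bigr)$ up to the orientation of the quotient), and your observation that the dual parameters of $(a,b,qd^{-1},qc^{-1})$ are $(\td b,\td a,\td c,q\td d^{-1})$, so the anti-symmetric factor $\ga^{-1}(1-\td a\ga)(1-\td b\ga)z^{-1}(1-az)(1-bz)$ is unchanged, is precisely the point that makes the comparison of the two instances of \eqref{K85} work.
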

\begin{proof}
These symmetries follow from \eqref{K206} together with \eqref{K210}.
Alternatively, both formulas follow by \eqref{K85} from a corresponding
formula (\eqref{K25} and \eqref{K56}, respectively) for the two
symmetric AW functions occurring in \eqref{K85}.

The symmetries \eqref{K51} and \eqref{K93} also follow from
each other by the duality \eqref{K50} in a similar way as in Remark \ref{K94}.
\end{proof}
\paragraph{Acknowledgments}
We thank Jasper Stokman for help in writing Appendix A. Also his comments on an
earlier version of this paper were very helpful. It led us to the present
thorough revision, where the non-symmetric Askey--Wilson function is defined
as Stokman's rank one Cherednik kernel.
We thank Siddhartha Sahi for pointing us to his paper \cite{Sa2000}.
Finally we thank an anonymous referee for careful reading and useful
suggestions to clarify our text at some places.
\appendix
\section{Stokman's kernel formula for $n=1$}
Stokman's formula \cite[(6.6)]{St2003} for the normalized Cherednik kernel
in case of general rank $n$ is the following expansion, absolutely and
uniformly convergent for $(\ga,z)$ in compacta of $(\CC^*)^n\times(\CC^*)^n$.
\be
\goE(\ga,z)=G_{\si\tau}(z)G_\tau(z)
\sum_{s\in S_\tau} \mu_\tau(s) E_{\si\tau}(s;\ga) E_\tau(s;z),
\label{K197}
\ee
where, by \cite[(5.13), (5.15), (6.7)]{St2003},
\be
\mu_\tau(s)=(C_0)_\tau\,\frac{G_{\tau\si\tau}(s)N_{\tau\si}(s^{-1})}
{G_{\tau\si\tau}((s_0)_\tau)N_{\tau\si}((s_0)_{\tau\ddagger})}\,,
\quad s\in\FSS_\tau.
\label{K190}
\ee
Here we will make the case $n=1$ of \eqref{K197} explicit, while working with
parameters $a,b,c,d$.

For a good understanding of \eqref{K197}, \eqref{K190} we recapitulate some
notations from \cite{St2003}, already specializing to $n=1$.
First observe by \cite[(4.7)]{St2003} that Stokman's parameters $t_1,u_1,u_0,t_0$ 
correspond with the parameters $a,b,c,d$ as in \eqref{eq:SO1}, \eqref{eq:SO1inv},
but with $k_1,k_0$ replaced by $t_1,t_0$:
\be\label{K118}
\begin{split}
(a,b,c,d)&=(t_1u_1,-t_1u_1^{-1},q^{1/2} t_0u_0,-q^{1/2} t_0u_0^{-1}),\\
(t_1,u_1,u_0,t_0)&=\Big(i\sqrt{ab},-i\sqrt{ab^{-1}},
i\sqrt{cd^{-1}},-i\sqrt{q^{-1}cd}\,\Big).
\end{split}
\ee
By \cite[p.414]{St2003}, assuming that we work with parameters
$t_1,u_1,u_0,t_0$,
the involution $\si$ interchanges the second and fourth
parameter,
the involution $\tau$ interchanges the third and fourth parameter, and the
involution
$\ddagger$ sends all four parameters and $q$ to their inverses,
just as the action on
parameter space of $\si$, $\tau$ and $\eta$ in our \eqref{eq:sig-cal},
\eqref{eq:tau-new} and \eqref{eq:eta-new} (except that we have minus signs for
$\eta$ acting on $t_1,u_1,u_0,t_0$). Equivalently, by
\cite[p.316]{St2002}, assuming that we work with parameters $a,b,c,d$, then
$\si$, $\tau$ and $\ddagger$ act on parameters
$p_1,p_2,p_3,p_4$ (not necessarily equal to $a,b,c,d$) as
\begin{align*}
&\si(p_1,p_2,p_3,p_4)=(\td p_1,\td p_2,\td p_3,\td p_4):=
\Bigg(\sqrt{q^{-1}p_1p_1p_3p_4},
\sqrt{q\frac{p_1p_2}{p_3p_4}},
\sqrt{q\frac{p_1p_3}{p_2p_4}},\sqrt{q\frac{p_1p_4}{p_2p_3}}\,\Bigg),\\
&\tau(p_1,p_2,p_3,p_4)=(p_1,p_2,p_3,qp_4^{-1}),\quad
\ddagger(p_1,p_2,p_3,p_4;q)=(-p_1^{-1},-p_2^{-1},-p_3^{-1},-p_4^{-1};q^{-1}).
\end{align*}
By \cite[p.414]{St2003}, if $H=H^{a,b,c,d}$ is a parameter depending object
and $t$ maps the parameter space bijectively onto itself,
then $H_t:=H^{t^{-1}(a,b,c,d)}$. Hence,
if $s$ and $t$ are two such mappings then
$H_{st}=H^{(t^{-1}\circ s^{-1})(a,b,c,d)}$. In particular, if $s_1,\ldots,s_r$
are involutions then $H_{s_1\ldots s_r}=
H^{(s_r\ldots s_1)(a,b,c,d)}$. For instance,
$H_{\si\tau}=H^{\td a,\td b,\td c,q\td d^{-1}}$.

By \cite[(6.10)]{St2003} Stokman's spectrum $\FSS$ is the spectrum of
$\td a^{-1}Y$ (with $Y$ as in the present paper) acting in the basic
representation. Thus
by \eqref{K193}, \eqref{K195}, \eqref{K210} the spectral value
$s=q^{-m}\td a^{-1}$ corresponds to the eigenfunction $E_{-m}$ ($m\in\ZZ_{>0}$)
and $s=q^m \td a$ to $E_m$ ($m\in\ZZ_{\ge0}$),
where $E_n$ ($n\in\ZZ$) is the renormalized non-symmetric AW polynomial given by
\eqref{K202}. By \cite[p.425]{St2003} $s_0=\td a$
(the eigenvalue corresponding to eigenfunction $E_0=1$).

Thus \eqref{K197} takes for $n=1$ the form
\begin{align}
&\goE(\ga,z)=(C_0)_\tau\,G_{\si\tau}(\ga) G_\tau(z)
\Bigg(\sum_{m=1}^\iy (C_0)_\tau^{-1}
\mu_\tau\big(q^{-m}(abcd^{-1})^{-1/2}\big)
E_{-m}\big(z;a,b,c,qd^{-1}\big)\nn\\
&\times E_{-m}\big(\ga;\td a,\td b,\td c,q\td d^{-1}\big)
+\sum_{m=0}^\iy (C_0)_\tau^{-1}\mu_\tau\big(q^m(abcd^{-1})^{1/2}\big)
E_m\big(z;a,b,c,qd^{-1}\big)
E_m\big(\ga;\td a,\td b,\td c,q\td d^{-1}\big)\Bigg).
\label{K188}
\end{align}

By \cite[Definition 2.9]{St2003} $G(z)=G_d(z)$ as defined in \eqref{K164}.
Hence
\[
G_\tau(z)G_{\si\tau}(\gamma)=G_{qd^{-1}}(z) G_{q\td d^{-1}}(\ga)=
\frac1{(qd^{-1}z,qd^{-1}z^{-1},q\td d^{-1}\ga,q\td d^{-1}\ga^{-1};q)_\iy}\,.
\]
By \cite[(6.9)]{St2003}
\[
C_0=\frac{(ad,bd,cd,bc,a^{-1}d;q)_\iy}{(abcd;q)_\iy}\,,\quad
(C_0)_\tau=\frac{(qad^{-1},qbd^{-1},qcd^{-1},bc,qa^{-1}d^{-1};q)_\iy}
{(qabcd^{-1};q)_\iy}\,.
\]
Hence
\be
(C_0)_\tau\,G_{\si\tau}(\gamma) G_\tau(z)=
\frac{(bc,qad^{-1},qbd^{-1},qcd^{-1},qa^{-1}d^{-1};q)_\iy}
{(qabcd^{-1};q)_\iy
(qd^{-1}z,qd^{-1}z^{-1},q\td d^{-1}\ga,q\td d^{-1}\ga^{-1};q)_\iy}\,.
\label{K198}
\ee

By \eqref{K190} and \cite[(4.16)]{St2003}
\[
\frac{\mu_\tau\big(\big(q^m(abcd^{-1})^{1/2}\big)^{\pm1}\big)}
{(C_0)_\tau}=
\frac{G_{\tau\si\tau}\big(q^m(abcd^{-1})^{1/2}\big)}
{G_{\tau\si\tau}\big((abcd^{-1})^{1/2}\big)}\,
\frac{N^+_{\tau\si}\big(q^m(abcd^{-1})^{1/2}\big)}
{N^+_{\tau\si}\big((abcd^{-1})^{1/2}\big)}\,
\frac{\FSC_{\tau\si}\big(\big(q^m(abcd^{-1})^{1/2}\big)^{\mp1}\big)}
{\FSC_{\tau\si}\big((abcd^{-1})^{-1/2}\big)}\,.
\]
We have
\[
G_{\tau\si\tau}(z)=G_{\sqrt{a^{-1}bcd}}(z)=
\frac1{\big(\sqrt{a^{-1}bcd}\;z,\sqrt{a^{-1}bcd}\;z^{-1};q\big)_\iy}\,.
\]
Hence
\[
\frac{G_{\tau\si\tau}\big(q^m(abcd^{-1})^{1/2}\big)}
{G_{\tau\si\tau}\big((abcd^{-1})^{1/2}\big)}=
\frac{(bc;q)_m}{(qad^{-1};q)_m}\,(-ad^{-1})^m q^{\half m(m+1)}
\]
(see also this evaluation in the Proof of Theorem 4.2 in \cite{St2002}).
We also have
\[
\frac{N^+_{\tau\si}\big(q^m(abcd^{-1})^{1/2}\big)}
{N^+_{\tau\si}\big((abcd^{-1})^{1/2}\big)}\,
=a^{-2m}\,\frac{1-q^{2m}abcd^{-1}}{1-abcd^{-1}}\,
\frac{(ab,ac,qad^{-1},abcd^{-1};q)_m}{(bc,qbd^{-1},qcd^{-1},q;q)_m}
\]
(see \cite[(4.17)]{St2003} or see this evaluation
in the Proof of Theorem 4.2 in \cite{St2002} which uses \cite[(2.6)]{St2002}).
Thus
\begin{align}
&\frac{G_{\tau\si\tau}\big(q^m(abcd^{-1})^{1/2}\big)}
{G_{\tau\si\tau}\big((abcd^{-1})^{1/2}\big)}\,
\frac{N^+_{\tau\si}\big(q^m(abcd^{-1})^{1/2}\big)}
{N^+_{\tau\si}\big((abcd^{-1})^{1/2})\big)}\nn\\
&\qquad\qquad=(-1)^m (ad)^{-m} q^{{1/2} m(m+1)}\,
\frac{1-q^{2m}abcd^{-1}}{1-abcd^{-1}}\,
\frac{(ab,ac,abcd^{-1};q)_m}{(qbd^{-1},qcd^{-1},q;q)_m}\,.
\label{199}
\end{align}
By \cite[(2.3), (4.14)]{St2003} we have
\[
\FSC(x)=\frac{(1-ax^{-1})(1-bx^{-1})}{1-x^{-2}}
\]
(see also the expression for $\al(x)$ in \cite[\S6.2]{N-St}).
Hence
\begin{align*}
&\FSC_{\tau\si}(x)=\frac{(1-\sqrt{abcd^{-1}}\,x^{-1})(1-\sqrt{abc^{-1}d}\,x^{-1})}
{1-x^{-2}}\,,\\
&\FSC_{\tau\si}\Big(\big(q^m(abcd^{-1})^{1/2}\big)^{-1}\Big)=
\frac{(1-q^mab)(1-q^m abcd^{-1})}{1-q^{2m}abcd^{-1}}\,,\quad
\FSC_{\tau\si}\big((abcd^{-1})^{-1/2}\big)=1-ab,\\
&\FSC_{\tau\si}\Big(q^m(abcd^{-1})^{1/2}\Big)=
-\frac{ab(1-q^m)(1-q^m cd^{-1})}{1-q^{2m}abcd^{-1}}\,.
\end{align*}
So
\be
\begin{split}
&\frac{\FSC_{\tau\si}\big(q^m(abcd^{-1})^{1/2}\big)}
{\FSC_{\tau\si}\big((abcd^{-1})^{-1/2}\big)}
=-\frac{ab(1-q^m)(1-q^m cd^{-1})}{(1-ab)(1-q^{2m}abcd^{-1})}\,,\\
&\frac{\FSC_{\tau\si}\big(q^m(abcd^{-1})^{-1/2}\big)}
{\FSC_{\tau\si}\big((abcd^{-1})^{-1/2}\big)}=
\frac{(1-q^mab)(1-q^m abcd^{-1})}{(1-ab)(1-q^{2m}abcd^{-1})}\,.
\end{split}
\label{K200}
\ee
Altogether, by \eqref{K188}--\eqref{K200} we obtain
\begin{align*}
\goE(\ga,z)&=\frac{(bc,qad^{-1},qbd^{-1},qcd^{-1},qa^{-1}d^{-1};q)_\iy}
{(qabcd^{-1};q)_\iy
(qd^{-1}z,qd^{-1}z^{-1},q\td d^{-1}\ga,q\td d^{-1}\ga^{-1};q)_\iy}\\
&\quad\times\sum_{m=0}^\iy (-1)^m (ad)^{-m} q^{\half m(m+1)}\,
\frac{1-q^{2m}abcd^{-1}}{1-abcd^{-1}}\,
\frac{(ab,ac,abcd^{-1};q)_m}{(qbd^{-1},qcd^{-1},q;q)_m}\\
&\qquad\times\left(-\frac{ab(1-q^m)(1-q^m cd^{-1})}{(1-ab)(1-q^{2m}abcd^{-1})}\,
E_{-m}(z;a,b,c,qd^{-1})
E_{-m}(\ga;\td a,\td b,\td c,q\td d^{-1})\right.\nn\\
&\qquad\quad\left.+\frac{(1-q^mab)(1-q^m abcd^{-1})}{(1-ab)(1-q^{2m}abcd^{-1})}\,
E_m(z;a,b,c,qd^{-1})E_m(\ga;\td a,\td b,\td c,q\td d^{-1})\right).
\end{align*}
Hence
\begin{align}
\goE(\ga,z)&=\frac{(bc,qad^{-1},qbd^{-1},qcd^{-1},qa^{-1}d^{-1};q)_\iy}
{(qabcd^{-1};q)_\iy
(qd^{-1}z,qd^{-1}z^{-1},q\td d^{-1}\ga,q\td d^{-1}\ga^{-1};q)_\iy}\nn\\
&\quad\times\sum_{m=0}^\iy \frac{(-1)^m (ad)^{-m} q^{\half m(m+1)}}
{(1-ab)(1-abcd^{-1})}\,
\frac{(ab,ac,abcd^{-1};q)_m}{(qbd^{-1},qcd^{-1},q;q)_m}\nn\\
&\qquad\times\left(-ab(1-q^m)(1-q^m cd^{-1})\,
E_{-m}(z;a,b,c,qd^{-1})
E_{-m}(\ga;\td a,\td b,\td c,q\td d^{-1})\right.\nn\\
&\qquad\quad\left.+(1-q^mab)(1-q^m abcd^{-1})\,
E_m(z;a,b,c,qd^{-1})
E_m(\ga;\td a,\td b,\td c,q\td d^{-1})\right),
\label{K201}
\end{align}
which is an explicit expression for the rank one normalized Cherednik kernel,
with the summation absolutely and uniformly convergent in compacta of
$\CC^*\times\CC^*$.
\section{An alternative proof that the non-symmetric Askey--Wilson function
is an eigenfunction of $Y$}
In the proof of \eqref{K193} in \cite[Theorem 4.1]{Koo2007} a first step was
the following
$q$-difference formula \cite[(4.8)]{Koo2007} for AW polynomials.
\be
\de_{q,z}\,P_n^+(z;a,b,c,d)
=(q^{\half n}-q^{-\half n})(z-z^{-1})
P_{n-1}^+(z;q^{1/2} a,q^{1/2} b,q^{1/2} c,q^{1/2} d),
\label{K17}
\ee
where $P_n^+(z)$ is the monic AW polynomial given by \eqref{K14}
and where
\[
\de_{q,z}\,f(z):=f(q^{1/2} z)-f(q^{-1/2} z).
\]
We will extend \eqref{K17} for AW functions as given by \eqref{K80},
\begin{lm}
We have
\be
\de_{q,z}\,\goE^+(\ga;z;a,b,c,d)
=\frac{q^{1/2} a(1-\ga\td a)(1-\ga^{-1}\td a)}
{(1-ab)(1-ac)(1-ad)}\,
(z-z^{-1})
\goE^+(\ga;z;q^{1/2} a,q^{1/2} b,q^{1/2} c,q^{1/2} d).
\label{K10}
\ee
\end{lm}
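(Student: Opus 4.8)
The plan is to substitute the two-term expansion \eqref{K80} of $\goE^+$ into both sides of \eqref{K10} and match the pieces. Recall that \eqref{K80} writes $\goE^+(\ga;z;a,b,c,d)$ as $\goR(\ga;z;a,b,c,d)+\Lambda(\ga)\,\tfrac{(az,az^{-1};q)_\iy}{(qd^{-1}z,qd^{-1}z^{-1};q)_\iy}\,\goR(\ga;z;qd^{-1},b,c,qa^{-1})$, with $\goR$ the ${}_4\phi_3$ of \eqref{K81} and $\Lambda(\ga)$ the explicit prefactor appearing there. Writing the same identity for the parameters $(q^{1/2}a,q^{1/2}b,q^{1/2}c,q^{1/2}d)$ — whose dual $a$- and $d$-parameters are $q\td a$ and $\td d$ (a direct computation from the definition \eqref{79} of dual parameters) — one gets an expression of the same shape, with first term $\goR(\ga;z;q^{1/2}a,q^{1/2}b,q^{1/2}c,q^{1/2}d)$ and second term a scalar $\Lambda'(\ga)$ times $\tfrac{(q^{1/2}az,q^{1/2}az^{-1};q)_\iy}{(q^{1/2}d^{-1}z,q^{1/2}d^{-1}z^{-1};q)_\iy}\,\goR(\ga;z;q^{1/2}d^{-1},q^{1/2}b,q^{1/2}c,q^{1/2}a^{-1})$; a short computation gives $\Lambda'(\ga)/\Lambda(\ga)=\tfrac{(1-ab)(1-ac)(1-q^{-1}ad)(1-a^{-1}d^{-1})}{(1-\td a\ga)(1-\td a\ga^{-1})}$. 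It therefore suffices to establish a $\de_{q,z}$-formula for each of the two building blocks $\goR(\ga;z;a,b,c,d)$ and $\Psi(z):=\tfrac{(az,az^{-1};q)_\iy}{(qd^{-1}z,qd^{-1}z^{-1};q)_\iy}\,\goR(\ga;z;qd^{-1},b,c,qa^{-1})$, with coefficients compatible with this bookkeeping.

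For the first block I would start from the known formula \eqref{K17} for monic AW polynomials and pass to the renormalized ones via \eqref{K14}: the $q$-shifted-factorial bookkeeping collapses it to $\de_{q,z}E_n^+(z;a,b,c,d)=\tfrac{q^{1/2}a(1-q^{-n})(1-q^{n-1}abcd)}{(1-ab)(1-ac)(1-ad)}(z-z^{-1})E_{n-1}^+(z;q^{1/2}a,q^{1/2}b,q^{1/2}c,q^{1/2}d)$. By \eqref{K13}, \eqref{K81} this is precisely \eqref{K10} with $\goE^+$ replaced by $\goR$ at $\ga=q^n\td a$: there $1-q^{-n}=1-\ga^{-1}\td a$, $1-q^{n-1}abcd=1-\ga\td a$, and, since $q^n\td a=q^{n-1}(q\td a)$, $E_{n-1}^+(z;q^{1/2}a,\dots)=\goR(\ga;z;q^{1/2}a,\dots)$. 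To pass from $\ga\in q^{\ZZ_{\ge0}}\td a$ to all $\ga$, observe that for fixed generic $z,a,b,c,d$ both sides are everywhere-convergent ${}_4\phi_3$-series in the single variable $\ga+\ga^{-1}$, so the identity is one of convergent power series and follows by comparing coefficients (equivalently, by the non-terminating analogue of the term-by-term proof of \eqref{K17}). This yields $\de_{q,z}\goR(\ga;z;a,b,c,d)=\tfrac{q^{1/2}a(1-\ga\td a)(1-\ga^{-1}\td a)}{(1-ab)(1-ac)(1-ad)}(z-z^{-1})\goR(\ga;z;q^{1/2}a,q^{1/2}b,q^{1/2}c,q^{1/2}d)$, i.e.\ the first term of \eqref{K10}.

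The second block is the crux, and I expect it to be the main obstacle. What is needed is $\de_{q,z}\Psi(z)=\tfrac{q^{1/2}a(1-q^{-1}ad)(1-a^{-1}d^{-1})}{1-ad}(z-z^{-1})\,\Psi'(z)$, with $\Psi'$ the analogous object for the shifted parameters; this particular coefficient is exactly the one for which, after multiplication by $\Lambda(\ga)$ and use of the ratio $\Lambda'/\Lambda$ above, the two blocks recombine via \eqref{K80} (for the shifted parameters) into the right-hand side of \eqref{K10}. Since $\de_{q,z}$ is a genuine difference operator, the Gaussian-type prefactor of $\Psi$ does not pass through it; instead one uses $\tfrac{(az,az^{-1};q)_\iy}{(qd^{-1}z,qd^{-1}z^{-1};q)_\iy}\big|_{\,z\mapsto q^{\pm1/2}z}=\tfrac{(q^{1/2}az,q^{1/2}az^{-1};q)_\iy}{(q^{1/2}d^{-1}z,q^{1/2}d^{-1}z^{-1};q)_\iy}\cdot\tfrac{1-q^{-1/2}az^{\mp1}}{1-q^{1/2}d^{-1}z^{\pm1}}$ to reduce the claim to the weighted divided-difference identity
\begin{multline*}
(1-q^{-1/2}az^{-1})(1-q^{1/2}d^{-1}z^{-1})\,\goR(\ga;q^{1/2}z;qd^{-1},b,c,qa^{-1})
-(1-q^{-1/2}az)(1-q^{1/2}d^{-1}z)\,\goR(\ga;q^{-1/2}z;qd^{-1},b,c,qa^{-1})\\
=\frac{q^{1/2}a(1-q^{-1}ad)(1-a^{-1}d^{-1})}{1-ad}\,(z-z^{-1})(1-q^{1/2}d^{-1}z)(1-q^{1/2}d^{-1}z^{-1})\,
\goR(\ga;z;q^{1/2}d^{-1},q^{1/2}b,q^{1/2}c,q^{1/2}a^{-1}).
\end{multline*}
This is again a convergent ${}_4\phi_3$ identity — the ``mixed-shift'' companion of \eqref{K17}, lowering two parameters while raising the other two — and I would prove it by term-by-term comparison of the defining ${}_4\phi_3$ series, each term reducing to an elementary rational identity in $q^k$. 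Alternatively, $(z-z^{-1})^{-1}\de_{q,z}$ intertwines $L^{a,b,c,d}$ with $L^{q^{1/2}a,\dots}$ and multiplies eigenvalues by $q$ (the operator content of \eqref{K17}, directly checkable from \eqref{K72}); since $\Psi$ is, by the Suslov gauge symmetry of $L$ recalled in the remark after \eqref{K80} (see also Proposition \ref{K75}), an eigenfunction of $L^{a,b,c,d}$ in $\FSO^+G_{qd^{-1}}$ with eigenvalue $\td a(\ga+\ga^{-1})$, the function $(z-z^{-1})^{-1}\de_{q,z}\Psi$ is an eigenfunction of $L^{q^{1/2}a,\dots}$ in $\FSO^+G_{q^{1/2}d^{-1}}$ with the same eigenvalue as $\Psi'$, so uniqueness of such eigenfunctions up to scalars gives $(z-z^{-1})^{-1}\de_{q,z}\Psi=\nu(\ga)\Psi'$, with $\nu(\ga)$ fixed by comparing a single value of $z$. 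Adding the two blocks and invoking \eqref{K80} for $(q^{1/2}a,\dots,q^{1/2}d)$ — the fact that the dual $d$-parameter is again $\td d$ keeping the Gaussian $G_{q\td d^{-1}}(\ga)$, hence the function class of Proposition \ref{K91}, unchanged on both sides — gives \eqref{K10}. (The whole second-block computation can also be bypassed: by Proposition \ref{K91} both sides of \eqref{K10} are $G_{q\td d^{-1}}(\ga)$ times an entire function of $\ga+\ga^{-1}$, the factor $q^{\frac12 m(m+1)}$ in the coefficients of \eqref{K110} bounds these entire functions by $\exp(O((\log|\ga|)^2))$ with a constant strictly smaller than the one governing the canonical product over the geometric sequence $\ga=q^n\td a$, and since the two functions agree on that sequence by the polynomial case, a Hadamard-factorisation argument forces them to coincide.)
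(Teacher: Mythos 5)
Your overall route is the paper's: decompose $\goE^+$ via \eqref{K80} into $\goR(\ga;z;a,b,c,d)$ plus a prefactor times $\tfrac{(az,az^{-1};q)_\iy}{(qd^{-1}z,qd^{-1}z^{-1};q)_\iy}\,\goR(\ga;z;qd^{-1},b,c,qa^{-1})$, apply $\de_{q,z}$ to each block separately, and check that the constants recombine. Your bookkeeping is in fact correct: the ratio $\Lambda'/\Lambda$ you state is right, and your second-block coefficient $\tfrac{q^{1/2}a(1-q^{-1}ad)(1-a^{-1}d^{-1})}{1-ad}$ equals the paper's $q^{-1/2}a(1-qa^{-1}d^{-1})$. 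But the reduction of the second block contains a concrete error. Under $z\mapsto q^{1/2}z$ one has $(q^{-1/2}az^{-1};q)_\iy=(1-q^{-1/2}az^{-1})(q^{1/2}az^{-1};q)_\iy$ in the numerator and $(q^{3/2}d^{-1}z;q)_\iy=(q^{1/2}d^{-1}z;q)_\iy/(1-q^{1/2}d^{-1}z)$ in the denominator, so
\[
\frac{(az,az^{-1};q)_\iy}{(qd^{-1}z,qd^{-1}z^{-1};q)_\iy}\bigg|_{z\mapsto q^{\pm1/2}z}
=\frac{(q^{1/2}az,q^{1/2}az^{-1};q)_\iy}{(q^{1/2}d^{-1}z,q^{1/2}d^{-1}z^{-1};q)_\iy}\,
\bigl(1-q^{-1/2}az^{\mp1}\bigr)\bigl(1-q^{1/2}d^{-1}z^{\pm1}\bigr),
\]
a \emph{product} of the two linear factors, not the quotient you wrote. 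Consequently your displayed ``weighted divided-difference identity'' has the wrong weights (the $d$-factors are attached to the wrong shifts, and the spurious factor $(1-q^{1/2}d^{-1}z)(1-q^{1/2}d^{-1}z^{-1})$ appears on the right); the term-by-term verification you propose would fail for it. The correct reduced identity, with weights $(1-q^{-1/2}az^{-1})(1-q^{1/2}d^{-1}z)$ on $\goR(\ga;q^{1/2}z;\cdot)$ and $(1-q^{-1/2}az)(1-q^{1/2}d^{-1}z^{-1})$ on $\goR(\ga;q^{-1/2}z;\cdot)$ and no extra factor on the right, is exactly equivalent to the second display in the paper's proof and does follow term by term.

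Two further points need tightening. For the first block, agreement at $\ga=q^n\td a$ plus analyticity in $\ga+\ga^{-1}$ does \emph{not} extend the identity to all $\ga$: the points $q^n\td a+q^{-n}\td a^{-1}$ have no accumulation point, so ``comparing coefficients'' is not available from the polynomial case alone. What actually works is the parenthetical alternative you mention — the non-terminating term-by-term computation $\de_{q,z}(az,az^{-1};q)_k=q^{-1/2}a(1-q^k)(z-z^{-1})(q^{1/2}az,q^{1/2}az^{-1};q)_{k-1}$ — which is precisely the paper's argument and should be promoted to the main argument rather than an aside. Similarly, your two fallbacks for the second block rest on unproved inputs: uniqueness (up to scalars) of eigenfunctions of $L^{a,b,c,d}$ in $\FSO^+G_{qd^{-1}}$ with a prescribed eigenvalue is nowhere established in the paper, and the Hadamard/Carlson-type bypass requires genuine growth estimates that are delicate for functions of order zero vanishing on geometric sequences. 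With the product/quotient slip corrected and the direct term-by-term computations carried out, your argument becomes the paper's proof.
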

\begin{proof}
Straightforward computations give
\[
\de_{q,z}\,(az,az^{-1};q)_k
=q^{-1/2}a(1-q^k)(z-z^{-1})(q^{1/2} az,q^{1/2} az^{-1};q)_{k-1}
\]
and
\begin{multline*}
\de_{q,z}\,\frac{(az,az^{-1};q)_\iy}
{(q^{k+1}d^{-1}z,q^{k+1}d^{-1}z^{-1};q)_\iy}\\
=q^{-1/2}a(1-q^{k+1}a^{-1}d^{-1})(z-z^{-1})\,
\frac{(q^{1/2} az,q^{1/2} az^{-1};q)_\iy}
{(q^{k+\half}d^{-1}z,q^{k+\half}d^{-1}z^{-1};q)_\iy}\,.
\end{multline*}
Hence, by \eqref{K81},
\be
\de_{q,z}\,\goR(\ga;z;a,b,c,d)
=\frac{q^{1/2} a(1-\ga\td a)(1-\ga^{-1}\td a)}
{(1-ab)(1-ac)(1-ad)}\,
(z-z^{-1})
\goR(\ga;z;q^{1/2} a,q^{1/2} b,q^{1/2} c,q^{1/2} d)
\label{K82}
\ee
and
\begin{multline*}
\de_{q,z}\,\frac{(az,az^{-1};q)_\iy}{(qd^{-1}z,qd^{-1}z^{-1};q)_\iy}\,
\goR(\ga;z;qd^{-1},b,c,qa^{-1})=q^{-1/2}a(1-qa^{-1}d^{-1})(z-z^{-1})\\
\times\frac{(q^{1/2} az,q^{1/2} az^{-1};q)_\iy}
{(q^{1/2} d^{-1}z,q^{1/2} d^{-1}z^{-1};q)_\iy}\,
\goR(\ga;z;q^{1/2} d^{-1},q^{1/2} b,q^{1/2} c,q^{1/2} a^{-1}).
\end{multline*}
We conclude that \eqref{K82} remains true if there
$\goR(\ga;z;a,b,c,d)$ is replaced by the second term on the right in
\eqref{K80}, and hence also if there $\goR(\ga;z;a,b,c,d)$ is replaced by
$\goE^+(\ga;z;a,b,c,d)$.
\end{proof}

Note that, for $\ga=q^n\td a$, \eqref{K10} reduces by \eqref{K14} and
\eqref{K13} to \eqref{K17}.
\sLP\indent
Another important element in the proof of \cite[Theorem 4.1]{Koo2007} is the
following lemma (only sketched there).

\begin{lm}\label{K88}
If $f\in\FSO_{qd^{-1}}^+$ then
\begin{align}
(Yf)(z)&=
\frac{(c-z)(d-z)(1+ab-(a+b)z)}{(1-z^2)(q-z^2)}\,(f(q^{-1}z)-f(z))\nn\\
&\qquad+\frac{(1-az)(1-bz)(1-cz)(1-dz)}{(1-z^2)(1-qz^2)}\,(f(qz)-f(z))
+q^{-1}abcd\,f(z).\label{K11}
\end{align}
Also, if $g(z)=z^{-1}(c-z)(d-z)\,h(q^{-1/2}z)$ with
$h\in\FSO_{qd^{-1}}^+$ then
\begin{multline}
(Yg)(z)=\frac{(c-z)(d-z)(1+ab-(a+b)z)}{z(1-z^2)}\,h(q^{-1/2} z)\\
-\frac{(1-az)(1-bz)(1-cz)(1-dz)}{z(1-z^2)}\,h(q^{1/2} z).
\label{K12}
\end{multline}
\end{lm}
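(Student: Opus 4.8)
The plan is to obtain both parts by a direct substitution of the basic‑representation formulas \eqref{14}--\eqref{16} for $Z,T_1,T_0$, using that $f$ (resp. $h$) is symmetric; for part \textbf{(a)} a short detour through $D$ makes the bookkeeping painless, while part \textbf{(b)} rests on the observation that $g$ is a $T_0$‑eigenfunction. For \textbf{(a)}: since $f$ is symmetric, $T_1f=-ab\,f$ by Lemma \ref{K174} (\eqref{eq:T1ab}), hence $T_1^{-1}f=-(ab)^{-1}f$ by the first formula in \eqref{K168}. As $Y^{-1}=T_0^{-1}T_1^{-1}$ this gives $Y^{-1}f=-(ab)^{-1}T_0^{-1}f$, and the inverse formula for $T_0$ in \eqref{K168} then yields $q^{-1}abcd\,Y^{-1}f=-q^{-1}cd\,T_0^{-1}f=T_0f+(q^{-1}cd+1)f$. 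On the other hand $D=Y+q^{-1}abcd\,Y^{-1}$ acts on $\FSO_{qd^{-1}}^+$ as the Askey--Wilson operator $L$ of \eqref{K72}, by Proposition \ref{K162}(c). Subtracting, for $f\in\FSO_{qd^{-1}}^+$,
\[
(Yf)(z) = (Lf)(z) - (T_0f)(z) - (q^{-1}cd+1)f(z).
\]
Now I would insert \eqref{K72} for $L$ and the second form of \eqref{16} for $T_0$ (with $f(qz^{-1})=f(q^{-1}z)$), and collect coefficients of $f(qz)$, $f(q^{-1}z)$, $f(z)$. The coefficient of $f(qz)$ is $\dfrac{(1-az)(1-bz)(1-cz)(1-dz)}{(1-z^2)(1-qz^2)}$ on the nose; that of $f(q^{-1}z)$ is $\dfrac{(a-z)(b-z)(c-z)(d-z)}{(1-z^2)(q-z^2)}+\dfrac{(c-z)(d-z)}{q-z^2}=\dfrac{(c-z)(d-z)\bigl(1+ab-(a+b)z\bigr)}{(1-z^2)(q-z^2)}$, using $(a-z)(b-z)+(1-z^2)=1+ab-(a+b)z$; and the remaining $f(z)$‑coefficient equals, via the same identity, exactly $q^{-1}abcd$ minus the two coefficients just found. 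This is precisely the claim that $(Yf)(z)$ has the form \eqref{K11}.

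For \textbf{(b)}: write $g(z)=z^{-1}(c-z)(d-z)\,h(q^{-1/2}z)$ with $h$ symmetric, and first compute $(T_0g)(z)$ from the second form of \eqref{16}. Using $h(q^{1/2}z^{-1})=h(q^{-1/2}z)$ one gets $g(qz^{-1})=q^{-1}z^{-1}(cz-q)(dz-q)\,h(q^{-1/2}z)$, so the polynomial identity $(c-z)(d-z)-q^{-1}(cz-q)(dz-q)=(q^{-1}cd-1)(q-z^2)$ gives $g(z)-g(qz^{-1})=(q^{-1}cd-1)(q-z^2)\,z^{-1}h(q^{-1/2}z)$; since $z-qz^{-1}=-z^{-1}(q-z^2)$, \eqref{16} collapses to
\[
(T_0g)(z) = -q^{-1}cd\,g(z) - (1-q^{-1}cd)\,z^{-1}(c-z)(d-z)\,h(q^{-1/2}z) = -g(z).
\]
Hence $Yg=T_1T_0g=-T_1g$, and it remains to evaluate $T_1g$ from \eqref{15}. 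Using $h(q^{-1/2}z^{-1})=h(q^{1/2}z)$ one finds $g(z^{-1})=z^{-1}(1-cz)(1-dz)\,h(q^{1/2}z)$, so
\[
(T_1g)(z)=-\frac{(1+ab-(a+b)z)(c-z)(d-z)}{z(1-z^2)}\,h(q^{-1/2}z)+\frac{(1-az)(1-bz)(1-cz)(1-dz)}{z(1-z^2)}\,h(q^{1/2}z),
\]
and negating gives \eqref{K12}.

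Neither part has a real obstacle. In part \textbf{(a)} the only thing to verify is that the stand‑alone $f(z)$‑terms combine as claimed, a short rational‑function computation trivialised by $(a-z)(b-z)+(1-z^2)=1+ab-(a+b)z$; in part \textbf{(b)} the one substantive point is the identity $(c-z)(d-z)-q^{-1}(cz-q)(dz-q)=(q^{-1}cd-1)(q-z^2)$, which produces the eigenrelation $T_0g=-g$ and reduces the whole computation to a single application of $T_1$.
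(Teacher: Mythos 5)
Your proof is correct, but it takes a genuinely different route from the paper's. The paper quotes the explicit four-term closed formula for $(Yf)(z)=(T_1T_0f)(z)$ in the basic representation (from \cite[(3.13)]{Koo2007}, involving $f(z)$, $f(qz)$, $f(z^{-1})$, $f(qz^{-1})$) and then specializes it by ``straightforward computations'' to the symmetric case and to $g(z)=z^{-1}(c-z)(d-z)h(q^{-1/2}z)$. You never write that composite formula down: for \eqref{K11} you use the DAHA identities $Y=D-q^{-1}abcd\,Y^{-1}$, $T_1f=-ab\,f$ and the explicit inverse of $T_0$ from \eqref{K168} to get $q^{-1}abcd\,Y^{-1}f=T_0f+(q^{-1}cd+1)f$, hence $Yf=Lf-T_0f-(q^{-1}cd+1)f$ via Proposition \ref{K162}(c), and for \eqref{K12} you first establish the eigenrelation $T_0g=-g$ (the $T_0$-anti-symmetry underlying $P_n^{\dagger-}$) so that $Yg=-T_1g$ is a single application of \eqref{15}. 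I checked the two pivotal computations — the standalone $f(z)$-coefficient collapsing to $q^{-1}abcd$ via $(a-z)(b-z)+(1-z^2)=1+ab-(a+b)z$, and the identity $(c-z)(d-z)-q^{-1}(cz-q)(dz-q)=(q^{-1}cd-1)(q-z^2)$ — and both are right. The trade-off: the paper's method is one uniform substitute-and-simplify; yours only ever manipulates the one-step operators $T_0$ and $T_1$ and makes visible the structural reasons the answer is so clean (restriction of $D$ to symmetric functions, and the $T_0$-eigenvector property of $g$).
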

\begin{proof}
Recall from \cite[(3.13)]{Koo2007} that $(Yf)(z)$ equals
\begin{align*}
&\frac{z \bigl(1+ab-(a+b)z\bigr)
\bigl((c+d)q-(cd+q)z\bigr)\,f(z)}{q(1-z^2)(q-z^2)}
+\frac{(1-az)(1-bz)(1-cz)(1-dz)f(qz)}{(1-z^2)(1-q z^2)}\\
&+\frac{(1-a z)(1-b z) \bigl((c+d)qz-(cd+q)\bigr)f(z^{-1})}
{q(1-z^2)(1-q z^2)}
+\frac{(c-z)(d-z)\bigl(1+ab-(a+b)z\bigr)f(qz^{-1})}{(1-z^2)(q-z^2)}.
\end{align*}
Then \eqref{K11} and \eqref{K12} follow by straightforward computations.
\end{proof}

\begin{prop}
Define a non-symmetric AW type function
\begin{multline}
\goF(\ga;z)=
\goF(\ga;z;a,b,c,d):=\goE^+(\ga;z;a,b,c,d)-\frac{a(1-\td a\ga)}{(1-ab)(1-ac)(1-ad)}\\
\times z^{-1}(c-z)(d-z)
\goE^+(\ga;q^{-1/2}z;q^{1/2} a,q^{1/2} b,q^{1/2} c,q^{1/2} d).
\label{K22}
\end{multline}
It satisfies the eigenvalue equation
\be
\td a^{-1}(Y\, \goF(\ga;\,\cdot\,;a,b,c,d))(z)=\ga^{-1} \goF(\ga;z;a,b,c,d).
\label{K23}
\ee
\end{prop}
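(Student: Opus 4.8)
The plan is to follow the route indicated just before \eqref{K17}, imitating the proof of \eqref{K191}, \eqref{K192} in \cite[Theorem 4.1]{Koo2007}, but with the monic Askey--Wilson polynomials replaced by the symmetric AW function and the $q$-difference formula \eqref{K17} replaced by its extension \eqref{K10}. Throughout, write $\goF(\ga;z)=\goE^+(\ga;z;a,b,c,d)-\kappa\,\psi(z)$, where
\[
\psi(z):=z^{-1}(c-z)(d-z)\,\goE^+(\ga;q^{-1/2}z;q^{1/2} a,q^{1/2} b,q^{1/2} c,q^{1/2} d),\qquad
\kappa:=\frac{a(1-\td a\ga)}{(1-ab)(1-ac)(1-ad)},
\]
and let $C:=\dfrac{q^{1/2} a(1-\ga\td a)(1-\ga^{-1}\td a)}{(1-ab)(1-ac)(1-ad)}$ denote the prefactor in \eqref{K10}. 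As a preliminary, one notes that by Proposition \ref{K91} the parameter-shifted AW function lies in $\FSO^+G_{q^{1/2}d^{-1}}$ in its own variable, so that after the substitution $w=q^{-1/2}z$ the factor $d-z$ exactly cancels the extra pole and $\psi\in\FSO G_{qd^{-1}}$; hence $Y$ acts on $\psi$ (Corollary \ref{K170}) and formula \eqref{K12} of Lemma \ref{K88} applies to it. This is the only place requiring care with the function spaces.

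The first step is to evaluate $(Y\goE^+)(z)$. Comparing the operator of \eqref{K11} with the AW operator \eqref{K72} gives, for $f\in\FSO^+G_{qd^{-1}}$,
\[
(Yf)(z)=(L_z f)(z)-f(z)+\frac{(c-z)(d-z)}{q-z^2}\bigl(f(q^{-1}z)-f(z)\bigr).
\]
Applying this to $f=\goE^+(\ga;\cdot\,;a,b,c,d)$, using \eqref{K18} for the $L_z$-term and rewriting $f(q^{-1}z)-f(z)=-\de_{q,z}\bigl[\goE^+(\ga;\cdot\,;a,b,c,d)\bigr](q^{-1/2}z)$ by \eqref{K10} (here $(q^{-1/2}z-q^{1/2}z^{-1})/(q-z^2)=-q^{-1/2}z^{-1}$, so the $q$-difference collapses exactly into $\psi(z)$), one obtains
\[
(Y\goE^+)(z)=\bigl(\td a(\ga+\ga^{-1})-1\bigr)\,\goE^+(\ga;z;a,b,c,d)+\kappa(1-\ga^{-1}\td a)\,\psi(z).
\]

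The second step is to evaluate $(Y\psi)(z)$ by \eqref{K12}; writing $h(w):=\goE^+(\ga;w;q^{1/2} a,q^{1/2} b,q^{1/2} c,q^{1/2} d)$ and simplifying the coefficient of $h(q^{-1/2}z)$ gives
\[
(Y\psi)(z)-\psi(z)=\frac{(a-z)(b-z)(c-z)(d-z)}{z(1-z^2)}\,h(q^{-1/2}z)-\frac{(1-az)(1-bz)(1-cz)(1-dz)}{z(1-z^2)}\,h(q^{1/2}z).
\]
The right-hand side is the ``raising'' companion of \eqref{K10}, and I expect its evaluation to be the least mechanical part of the proof: solving \eqref{K10} for $h$ at the points $q^{1/2}z$ and $q^{-1/2}z$ expresses that right-hand side in terms of $\goE^+(\ga;q^{\pm1}z;a,b,c,d)$ and $\goE^+(\ga;z;a,b,c,d)$, and after clearing the denominators $q^{\pm1/2}z-q^{\mp1/2}z^{-1}$ it turns into $\tfrac{q^{1/2}}{C}\bigl((L_z-1-q^{-1}abcd)\goE^+\bigr)(z)$, which by \eqref{K18} and $q^{-1}abcd=\td a^2$ equals $-\tfrac{q^{1/2}(1-\td a\ga)(1-\td a\ga^{-1})}{C}\,\goE^+(\ga;z;a,b,c,d)=-\tfrac{(1-ab)(1-ac)(1-ad)}{a}\,\goE^+(\ga;z;a,b,c,d)$. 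Hence $(Y\psi)(z)=\psi(z)-\tfrac{(1-ab)(1-ac)(1-ad)}{a}\,\goE^+(\ga;z;a,b,c,d)$.

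The last step is to combine the two via $(Y\goF)(z)=(Y\goE^+)(z)-\kappa\,(Y\psi)(z)$. The $\psi$-coefficient becomes $\kappa(1-\ga^{-1}\td a)-\kappa=-\td a\ga^{-1}\kappa$, and, using $\kappa\cdot\tfrac{(1-ab)(1-ac)(1-ad)}{a}=1-\td a\ga$, the $\goE^+$-coefficient becomes $\bigl(\td a(\ga+\ga^{-1})-1\bigr)+(1-\td a\ga)=\td a\ga^{-1}$; therefore $(Y\goF)(z)=\td a\ga^{-1}\bigl(\goE^+(\ga;z;a,b,c,d)-\kappa\,\psi(z)\bigr)=\td a\ga^{-1}\goF(\ga;z)$, which is \eqref{K23}. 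Apart from the raising identity the argument is pure bookkeeping; the point to watch is that the half-integer powers of $q$ and the auxiliary constants $\td a=\sqrt{q^{-1}abcd}$ and $C$ be tracked precisely through the two uses of \eqref{K10}, so that the constant-factor cancellations $\tfrac{q^{1/2}}{C}\kappa(1-\ga^{-1}\td a)=1$ and $\tfrac{q^{1/2}}{C}(1-\td a\ga)(1-\td a\ga^{-1})=\tfrac{(1-ab)(1-ac)(1-ad)}{a}$ come out exactly as stated.
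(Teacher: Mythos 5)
Your proof is correct and follows essentially the same route as the paper's: both rest on Lemma \ref{K88} (formulas \eqref{K11} and \eqref{K12}) together with the $q$-difference formula \eqref{K10}, and both reduce the claim to the eigenvalue equation \eqref{K18} for the symmetric AW function via \eqref{K72}. The only difference is organizational --- you compute the $2\times2$ action of $Y$ on the pair $(\goE^+,\psi)$ and read off the eigenvector, whereas the paper applies $Y-\td a\ga^{-1}$ to each piece and shows the combination as in \eqref{K22} vanishes; your constant-factor bookkeeping checks out.
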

\begin{proof}
Consider \eqref{K11} and \eqref{K12} with
\[
f(z)=\goE^+(\ga;z;a,b,c,d),\quad
h(z)=\goE^+(\ga;z;q^{1/2} a,q^{1/2} b,q^{1/2} c,q^{1/2} d),
\]
and $g(z)=z^{-1}(c-z)(d-z)h(q^{-1/2}z)$. Then
\begin{align*}
(Yf)(z)-\td a\ga^{-1} f(z)&=
\frac{(c-z)(d-z)(1+ab-(a+b)z)}{(1-z^2)(q-z^2)}\,(f(q^{-1}z)-f(z))\nn\\
&\qquad+\frac{(1-az)(1-bz)(1-cz)(1-dz)}{(1-z^2)(1-qz^2)}\,(f(qz)-f(z))
+(\td a^2-\td a\ga^{-1})f(z)
\end{align*}
and
\begin{align*}
&(Yg)(z)-\td a\ga^{-1} g(z)\\
&=\frac{(c-z)(d-z)}{q-z^2}\left(\frac{(1+ab-(a+b)z)}{z(1-z^2)}-\td a\ga^{-1}\right)
\,\frac{(1-ab)(1-ac)(1-ad)}{a(1-\ga\td a)(1-\ga^{-1}\td a)}\,
(f(q^{-1}z)-f(z))\\
&\qquad+\frac{(1-az)(1-bz)(1-cz)(1-dz)}{(1-z^2)(1-qz^2)}\,
\frac{(1-ab)(1-ac)(1-ad)}{a(1-\ga\td a)(1-\ga^{-1}\td a)}\,
(f(qz)-f(z)),
\end{align*}
where we also used \eqref{K10}.
Now take a linear combination of the last two equalities as in \eqref{K22}.
Then we obtain
\begin{align*}
&(Y\, \goF(\ga;\,\cdot\,;a,b,c,d))(z)-\td a\ga^{-1} \goF(\ga;z;a,b,c,d)\\
&=\frac1{1-\td a^{-1}\ga}\left(
\frac{(1-az)(1-bz)(1-cz)(1-dz)}{(1-z^2)(1-qz^2)}\,
\bigl(f(qz)-f(z)\bigr)\right.\\
&\qquad\left.+\frac{(a-z)(b-z)(c-z)(d-z)}{(1-z^2)(q-z^2)}\,
\bigl(f(q^{-1}z)-f(z)\bigr)+(1-\td a\ga)(1-\td a\ga^{-1})f(z)\right),
\end{align*}
which vanishes because of \eqref{K72} and \eqref{K18}.
\end{proof}

By \eqref{K13} and \eqref{K14}
formula \eqref{K22} reduces for $\ga=q^n\td a$ to
\begin{align*}
&\goF(q^n\td a;z;a,b,c,d)=
E_n^+(z;a,b,c,d)
-\frac{a(1-q^{n-1}abcd)}{(1-ab)(1-ac)(1-ad)}\,
z^{-1}(c-z)(d-z)\\
&\times E_{n-1}^+(q^{-1/2}z;q^{1/2} a,q^{1/2} b,q^{1/2} c,q^{1/2} d)
=\frac{(q^{n-1}abcd;q)_n a^n}{(ab,ac,ad;q)_n}\\
&\times\left(P_n^+(z;a,b,c,d)
-q^{\half(n-1)}z^{-1}(c-z)(d-z)
P_{n-1}^+(q^{-1/2}z;q^{1/2} a,q^{1/2} b,q^{1/2} c,q^{1/2} d)\right),
\end{align*}
which is a constant multiple of the non-symmetric AW polynomial
$P_{-n}(z)$ as given in \eqref{K191}.
Also, for $\ga=q^{-n}\td a^{-1}$, formula \eqref{K22} reduces to
\begin{align*}
&\goF(q^{-n}\td a^{-1};z;a,b,c,d)=
E_n^+(z;a,b,c,d)
-\frac{a(1-q^{-n})}{(1-ab)(1-ac)(1-ad)}\,
z^{-1}(c-z)(d-z)\\
&\times E_{n-1}^+(q^{-1/2}z;q^{1/2} a,q^{1/2} b,q^{1/2} c,q^{1/2} d)
=\frac{(q^{n-1}abcd;q)_n a^n}{(ab,ac,ad;q)_n}\\
&\times\left(P_n^+(z;a,b,c,d)
-\frac{q^{\half(n-1)}(1-q^{-n})}{1-q^{n-1}abcd}\,z^{-1}(c-z)(d-z)
P_{n-1}^+(q^{-1/2}z;q^{1/2} a,q^{1/2} b,q^{1/2} c,q^{1/2} d)\right),
\end{align*}
which is a constant multiple of $P_n(z)$ as given in \eqref{K192}.
For these two choices of $\ga$ the eigenvalue equation \eqref{K23} also agrees
with the eigenvalue equations \eqref{K193}, \eqref{K195}

As observed in Remark \ref{K212}, $\goF(\ga;z)$ cannot be equal
(up to a constant factor) to the non-symmetric AW function
$\goE(\ga;z)$. Furthermore, from \eqref{K22} we cannot
read off whether $\goF(\ga;z)$ satisfies a duality and a $c\leftrightarrow qd^{-1}$
symmetry as in \eqref{K50} and \eqref{K51}.

We now continue with the expression \eqref{K85} for $\goE(\ga;z)$,
which extends the expression for the non-symmetric AW polynomials
given in \eqref{K100}, \eqref{K101}. We will prove the eigenfunction result
from this expression, but we need two preparatory lemmas.

\begin{lm}
We have
\begin{align}
&z(1-q^{-1/2}az^{-1})(1-q^{-1/2}bz^{-1})\goE^+(\ga;q^{-1/2}z;a,b,c,d)\nn\\
&\qquad\qquad-z^{-1}(1-q^{-1/2}az)(1-q^{-1/2}bz)\goE^+(\ga;q^{1/2} z;a,b,c,d)\nn\\
&\qquad
=(1-q^{-1}ab)(z-z^{-1}) \goE^+(\ga;z;q^{-1/2}a,q^{-1/2}b,q^{1/2} c,q^{1/2} d).
\label{K31}
\end{align}
\end{lm}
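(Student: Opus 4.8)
The plan is to read the left-hand side of \eqref{K31} as the result of applying to $z\mapsto\goE^+(\ga;z;a,b,c,d)$ the linear operator
\[
(\mathcal{B}g)(z):=z(1-q^{-1/2}az^{-1})(1-q^{-1/2}bz^{-1})\,g(q^{-1/2}z)-z^{-1}(1-q^{-1/2}az)(1-q^{-1/2}bz)\,g(q^{1/2}z),
\]
and to prove \eqref{K31} along the same route used for \eqref{K10}: split $\goE^+$ via the decomposition \eqref{K80} as $\goR(\ga;z;a,b,c,d)+S(\ga;z;a,b,c,d)$, where $S(\ga;z;a,b,c,d)$ denotes the second summand there, and show separately that $\mathcal{B}$ sends each of the two summands, with parameters $(a,b,c,d)$, to $(1-q^{-1}ab)(z-z^{-1})$ times the same summand with parameters $(q^{-1/2}a,q^{-1/2}b,q^{1/2}c,q^{1/2}d)$. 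Since $q^{-1/2}a\cdot q^{-1/2}b\cdot q^{1/2}c\cdot q^{1/2}d=abcd$, this parameter shift fixes $\td a$; so once both compatibility statements are established, adding them and re-applying \eqref{K80} to the shifted parameters gives exactly \eqref{K31}.

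For the first summand I would work term by term in the series \eqref{K81}. The shift $(a,b,c,d)\mapsto(q^{-1/2}a,q^{-1/2}b,q^{1/2}c,q^{1/2}d)$ leaves $ac$, $ad$ and $\td a$ unchanged and replaces $ab$ by $q^{-1}ab$, so after cancelling the common factor $(\td a\ga,\td a\ga^{-1};q)_m\,q^m/\big((ac,ad;q)_m(q;q)_m\big)$ from the $m$-th terms and using $(1-q^{-1}ab)/(q^{-1}ab;q)_m=(1-q^{m-1}ab)/(ab;q)_m$, the required identity collapses, after one pulls the factors $(q^{1/2}az;q)_{m-1}(q^{1/2}az^{-1};q)_{m-1}(1-q^{-1/2}az)(1-q^{-1/2}az^{-1})$ out of all the $q$-shifted factorials (valid for $m\ge1$; the case $m=0$ is the same display with $q^{m-1}=q^{-1}$), to the purely elementary statement
\[
z(1-q^{-1/2}bz^{-1})(1-q^{m-1/2}az^{-1})-z^{-1}(1-q^{-1/2}bz)(1-q^{m-1/2}az)=(1-q^{m-1}ab)(z-z^{-1}),
\]
which is immediate by expansion.

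For the second summand $S$ I would first rewrite it, exactly as in the proof of \eqref{K10} (using $(q^m x;q)_\iy=(x;q)_\iy/(x;q)_m$), as $\sum_{m\ge0}c_m(\ga)\,R_m(z)$ with $R_m(z)=(az,az^{-1};q)_\iy/(q^{m+1}d^{-1}z,q^{m+1}d^{-1}z^{-1};q)_\iy$ and $c_m(\ga)$ independent of $z$. A short computation of the same type as the $\de_{q,z}$-computation there then gives
\[
(\mathcal{B}R_m)(z)=(1-q^mbd^{-1})(z-z^{-1})\,\frac{(q^{-1/2}az,q^{-1/2}az^{-1};q)_\iy}{(q^{m+1/2}d^{-1}z,q^{m+1/2}d^{-1}z^{-1};q)_\iy},
\]
and the factor on the right is precisely the analogue $R_m^{\sharp}(z)$ of $R_m(z)$ for the shifted parameters. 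Since the shift turns $c_m(\ga)$ into the corresponding $c_m^{\sharp}(\ga)$ with $(1-q^mbd^{-1})\,c_m(\ga)=(1-bd^{-1})\,c_m^{\sharp}(\ga)$, summing gives $\mathcal{B}$ applied to the $\goR$-factor of $S$ equal to $(1-bd^{-1})(z-z^{-1})$ times its shifted version, and it remains only to check the scalar identity $(1-bd^{-1})\,K(\ga;a,b,c,d)=(1-q^{-1}ab)\,K(\ga;q^{-1/2}a,q^{-1/2}b,q^{1/2}c,q^{1/2}d)$, where $K$ is the $z$-independent prefactor of $S$ in \eqref{K80}; this follows at once by inspecting its factors, since under the shift only $(qbd^{-1};q)_\iy$ in the numerator (picking up $1-bd^{-1}$) and $(ab;q)_\iy$ in the denominator (picking up $1-q^{-1}ab$) change, while $(\td a\ga,\td a\ga^{-1},qcd^{-1},qa^{-1}d^{-1};q)_\iy$ and $(q\ga\td a a^{-1}d^{-1},q\ga^{-1}\td a a^{-1}d^{-1},ac,q^{-1}ad;q)_\iy$ are all invariant.

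The routine part is the two elementary displayed identities and the factor-tracking in $K$; the one genuinely structural point — the analogue here of why the proof of \eqref{K10} went through — is that the single operator $\mathcal{B}$, built only from $a$ and $b$, is simultaneously compatible with \emph{both} pieces of \eqref{K80}, even though the second piece is a $\goR$ with the rather different parameter quadruple $(qd^{-1},b,c,qa^{-1})$; this is exactly what the computation of $\mathcal{B}R_m$ together with the $K$-ratio verifies, and is where I expect the bookkeeping to need care. Finally, all series involved are non-terminating ${}_4\phi_3$'s with argument $q\in(0,1)$, hence absolutely convergent on compacta of $(\ga,z)$ away from the parameter poles, so the term-by-term application of $\mathcal{B}$ and the rearrangements are legitimate, and \eqref{K31} then holds as an identity of meromorphic functions (equivalently, in $\FSO^+G_{qd^{-1}}$ as a function of $z$).
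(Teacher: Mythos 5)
Your proposal is correct and follows essentially the same route as the paper: split $\goE^+$ via \eqref{K80} into $\goR(\ga;z;a,b,c,d)$ plus the second summand, verify the three-term identity separately for each piece by an elementary term-by-term computation on the $q$-shifted factorials (your displayed identity $z(1-q^{-1/2}bz^{-1})(1-q^{m-1/2}az^{-1})-z^{-1}(1-q^{-1/2}bz)(1-q^{m-1/2}az)=(1-q^{m-1}ab)(z-z^{-1})$ is exactly the paper's first ``straightforward computation'' with the common factors cancelled, and your formula for $\mathcal{B}R_m$ is its second), and then reassemble using \eqref{K80} with the shifted parameters, noting that $\td a$ is fixed. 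The bookkeeping of the prefactor ratio $(1-bd^{-1})/(1-q^{-1}ab)$ and of the change $(qbd^{-1};q)_m\to(bd^{-1};q)_m$ checks out, so no gap.
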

\begin{proof}
Straightforward computations give
\begin{align*}
&z(1-q^{-1/2}az^{-1})(1-q^{-1/2}bz^{-1})(q^{-1/2}az,q^{1/2} az^{-1};q)_k\\
&\qquad\qquad
-z^{-1}(1-q^{-1/2}az)(1-q^{-1/2}bz)(q^{1/2} az,q^{-1/2} az^{-1};q)_k\\
&\qquad=(1-q^{k-1}ab)(z-z^{-1}) (q^{-1/2}az,q^{-1/2} az^{-1};q)_k
\end{align*}
and
\begin{multline*}
z(1-q^{-1/2}az^{-1})(1-q^{-1/2}bz^{-1})\,
\frac{(q^{-1/2}az,q^{1/2} az^{-1};q)_\iy}
{(q^{k+\half}d^{-1}z,q^{k+\frac32}d^{-1}z^{-1};q)_\iy}
-z^{-1}(1-q^{-1/2}az)(1-q^{-1/2}bz)\\
\times\frac{(q^{1/2} az,q^{-1/2}az^{-1};q)_\iy}
{(q^{k+\frac32}d^{-1}z,q^{k+\half}d^{-1}z^{-1};q)_\iy}
=(1-q^k bd^{-1})(z-z^{-1})\,
\frac{(q^{-1/2}az,q^{-\half}az^{-1};q)_\iy}
{(q^{k+\half}d^{-1}z,q^{k+\half}d^{-1}z^{-1};q)_\iy}\,.
\end{multline*}
Hence, by \eqref{K81},
\begin{align}
&z(1-q^{-1/2}az^{-1})(1-q^{-1/2}bz^{-1})\goR(\ga;q^{-1/2}z;a,b,c,d)\nn\\
&\qquad\qquad-z^{-1}(1-q^{-1/2}az)(1-q^{-1/2}bz)\goR(\ga;q^{1/2} z;a,b,c,d)\nn\\
&\qquad=(1-q^{-1}ab)(z-z^{-1}) \goR(\ga;z;q^{-1/2}a,q^{-1/2}b,q^{1/2} c,q^{1/2} d)
\label{K84}
\end{align}
and
\begin{multline*}
z(1-q^{-1/2}az^{-1})(1-q^{-1/2}bz^{-1})\,
\frac{(q^{-1/2}az,q^{1/2} az^{-1};q)_\iy}
{(q^{1/2}d^{-1}z,q^{\frac32}d^{-1}z^{-1};q)_\iy}\,
\goR(\ga;q^{-1/2}z;qd^{-1},b,c,qa^{-1})\\
-z^{-1}(1-q^{-1/2}az)(1-q^{-1/2}bz)\,
\frac{(q^{1/2} az,q^{-1/2}az^{-1};q)_\iy}
{(q^{\frac32}d^{-1}z,q^{{1/2}}d^{-1}z^{-1};q)_\iy}\,
\goR(\ga;q^{1/2} z;qd^{-1},b,c,qa^{-1})\\
=(1-bd^{-1})(z-z^{-1})
\goR(\ga;z;q^{1/2} d^{-1},q^{-1/2}b,q^{1/2} c,q^{\frac32}a^{-1}).
\end{multline*}
We conclude that \eqref{K84} remains true if there
$\goR(\ga;z;a,b,c,d)$ is replaced by the second term on the right in
\eqref{K80}, and hence also if there $\goR(\ga;z;a,b,c,d)$ is replaced by
$\goE^+(\ga;z;a,b,c,d)$.
\end{proof}

For $\ga=q^n\td a$ formula \eqref{K31} becomes a formula for 
AW polynomials which was earlier given in \cite[(2.1B)]{Ka-Mi}.

\begin{lm}
If $k\in\FSO_{qd^{-1}}^+$ and $l(z):=z^{-1}(1-az)(1-bz)k(z)$ then 
\begin{align}
&a^{-1}b^{-1}(Yl)(z)
=\left(a+b+\tfrac1q(-\tfrac1a-\tfrac1b+c+d)\right.\nn\\
&\qquad\qquad\qquad\qquad
\left.+abcd\left(z+\tfrac1z-\tfrac1a-\tfrac1b-\tfrac1c-\tfrac1d\right)+
\left(\tfrac1{qab}+cd\left(1-\tfrac1q\right)-1\right)z^{-1}\right)k(z)\nn\\
&\qquad\qquad+\frac{(qa-z)(qb-z)(c-z)(d-z)(1+ab-(a+b)z)}{qabz(1-z^2)(q-z^2)}\,
(k(q^{-1}z)-k(z))\nn\\
&\qquad\qquad+\frac{(1-az)(1-bz)(1-cz)(1-dz)(1-qaz)(1-qbz)}{qabz(1-z^2)(1-qz^2)}\,
(k(qz)-k(z)).\label{K87}
\end{align}
\end{lm}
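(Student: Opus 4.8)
The plan is to prove \eqref{K87} by the same mechanism as Lemma~\ref{K88}: write the action of $Y=T_1T_0$ (see \eqref{K83}) on an arbitrary function in the basic representation as an explicit second order $q$-difference operator, substitute $f=l$ with $l(z)=z^{-1}(1-az)(1-bz)k(z)$, and use the symmetry $k(z^{-1})=k(z)$ to reorganise the result into the three-term shape of \eqref{K87}. By Corollary~\ref{K170} the space $\FSO_{qd^{-1}}^+$ extends to a module on which $Y$ acts and $l\in\FSO\,G_{qd^{-1}}$, so the formula for $(Yf)(z)$ recalled in the proof of Lemma~\ref{K88} (equivalently \cite[(3.13)]{Koo2007}) applies to $l$; it expresses $(Yf)(z)$ as an explicit rational combination of $f(z)$, $f(z^{-1})$, $f(qz)$ and $f(qz^{-1})$.

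First I would record the four substituted values, using $k(z^{-1})=k(z)$ and $k(qz^{-1})=k(q^{-1}z)$: $l(z)=z^{-1}(1-az)(1-bz)k(z)$, $l(z^{-1})=z^{-1}(a-z)(b-z)k(z)$, $l(qz)=(qz)^{-1}(1-qaz)(1-qbz)k(qz)$ and $l(qz^{-1})=(qz)^{-1}(z-qa)(z-qb)k(q^{-1}z)$. Plugging these into the $Y$-formula gives $(Yl)(z)=c_0(z)k(z)+c_+(z)k(qz)+c_-(z)k(q^{-1}z)$, where $c_0$ is the sum of the $f(z)$ and $f(z^{-1})$ contributions and $c_\pm$ come from the $f(qz)$ and $f(qz^{-1})$ terms. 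Writing $c_\pm(z)k(q^{\pm1}z)=c_\pm(z)\bigl(k(q^{\pm1}z)-k(z)\bigr)+c_\pm(z)k(z)$ immediately recovers the two difference terms of \eqref{K87}: multiplying the known coefficient of $f(qz)$ in $(Yf)(z)$ by $(qz)^{-1}(1-qaz)(1-qbz)$ and dividing by $ab$ gives exactly $\dfrac{(1-az)(1-bz)(1-cz)(1-dz)(1-qaz)(1-qbz)}{qabz(1-z^2)(1-qz^2)}$, while multiplying the coefficient of $f(qz^{-1})$ by $(qz)^{-1}(z-qa)(z-qb)$ and dividing by $ab$ gives, after $(z-qa)(z-qb)=(qa-z)(qb-z)$, exactly $\dfrac{(qa-z)(qb-z)(c-z)(d-z)(1+ab-(a+b)z)}{qabz(1-z^2)(q-z^2)}$.

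The one substantial step, which I expect to be the main obstacle, is the identity $a^{-1}b^{-1}\bigl(c_0(z)+c_+(z)+c_-(z)\bigr)$ equals the Laurent polynomial appearing as the coefficient of $k(z)$ on the right of \eqref{K87}; that is, a rational function of $z$ with apparent poles at $z^2\in\{1,q,q^{-1}\}$ must collapse to a Laurent polynomial supported on $z^{-1},z^0,z^1$. The direct route is to bring $c_0+c_++c_-$ over the common denominator $qz(1-z^2)(q-z^2)(1-qz^2)$ and check the resulting polynomial identity in $z$, of degree $\le 8$, coefficient by coefficient. A lighter organisation is to verify that the residues of $c_0$, $c_+$, $c_-$ at each of $z=\pm1$, $z=\pm q^{1/2}$, $z=\pm q^{-1/2}$ cancel, so that $c_0+c_++c_-$ is a Laurent polynomial; its behaviour as $z\to\infty$ (only $c_+$ grows, and linearly) and as $z\to0$ (a simple pole coming from the $z^{-1}$ parts of $c_0$, $c_+$, $c_-$) then pins down the span $z^{-1},z^0,z^1$, after which only the three coefficients of $z$, $1$, $z^{-1}$ remain to be matched. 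Either way this verification, while elementary, is unavoidably lengthy, as is usual for such $q$-difference identities.
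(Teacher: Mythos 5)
Your proposal is correct and follows essentially the same route as the paper, whose proof consists precisely of substituting $l$ into the explicit expression for $(Yf)(z)$ recorded in the proof of Lemma \ref{K88} and performing the resulting (lengthy but elementary) rational-function simplification; your substituted values $l(z^{\pm1})$, $l(qz)$, $l(qz^{-1})$ and the resulting coefficients of $k(qz)-k(z)$ and $k(q^{-1}z)-k(z)$ check out. The only part you leave as a finite verification — that the residual coefficient of $k(z)$ collapses to the stated Laurent polynomial in $z^{-1},1,z$ — is exactly the ``straightforward computation'' the paper also leaves implicit, and your residue/asymptotics organisation of that check is sound.
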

\begin{proof}
This follows by straightforward computation from the expression for $(Yf)(z)$
in the Proof of Lemma \ref{K88}.
\end{proof}

\paragraph{Proof of Proposition \ref{K208} (c)}\quad\\
We have to prove that
\be
\td a^{-1}(Y\,\goE(\ga;\,\cdot,;a,b,c,d))(z)=\ga^{-1} \goE(\ga;z;a,b,c,d).
\label{K47}
\ee

Consider \eqref{K22} and \eqref{K85} with
\begin{align*}
&f(z)=\goE^+(\ga;z;a,b,c,d),\quad
g(z)=z^{-1}(c-z)(d-z)h(q^{-1/2}z),\\
&h(z)=\goE^+(\ga;z;q^{1/2} a,q^{1/2} b,q^{1/2} c,q^{1/2} d),\\
&k(z):=\goE^+(\ga;z;qa,qb,c,d),\quad
l(z):=z^{-1}(1-az)(1-bz)k(z).
\end{align*}
On comparing \eqref{K23} and \eqref{K47} we see that it is sufficient to prove that
\be
((Y-\td a\ga^{-1})g)(z)
=\frac d{1-qab}\,\frac{1-\ga\sqrt{qabc^{-1}d^{-1}}}{\ga\sqrt{q^{-1}abc^{-1}d}}\,
((Y-\td a\ga^{-1})l)(z).
\label{K89}
\ee
On the left-hand side of \eqref{K89} substitute \eqref{K12} and next substitute
in the resulting expression for $h(q^{\pm1/2}z)$ the following
formulas obtained from
\eqref{K31}:
\begin{align*}
h(q^{-1/2}z)&=
\frac{(1-qaz^{-1})(1-qbz^{-1})}{(1-qab)(1-qz^{-2})}\,k(q^{-1}z)
+\frac{(1-az)(1-bz)}{(1-qab)(1-q^{-1}z^2)}\,k(z),\\
h(q^{1/2}z)&=
\frac{(1-az^{-1})(1-bz^{-1})}{(1-qab)(1-q^{-1}z^{-2})}\,k(z)
+\frac{(1-qaz)(1-qbz)}{(1-qab)(1-qz^2)}\,k(qz).
\end{align*}
On the right-hand side of \eqref{K89} substitute \eqref{K87}.
From the resulting equality \eqref{K80}, which yet has to be proved, consider
the left-hand side minus the right-hand side. We can show that this equals
\begin{align*}
&\frac{cd(1-az)(1-bz)}{q\td a}(qab-1)\ga z\left(
\frac{(1-qaz)(1-qbz)(1-cz)(1-dz)}{(1-z^2)(1-qz^2)}\,
\bigl(k(qz)-k(z)\bigr)\right.\\
&\qquad\left.+\frac{(qa-z)(qb-z)(c-z)(d-z)}{(1-z^2)(q-z^2)}\,
\bigl(k(q^{-1}z)-k(z)\bigr)+(1-q\td a\ga)(1-q\td a\ga^{-1})k(z)\right),
\end{align*}
which vanishes because of \eqref{K72} and \eqref{K18}.\qed

\end{document}